\numberwithin{equation}{section}
\newtheorem{thm}[equation]{Theorem}
\newtheorem{prop}[equation]{Proposition}
\newtheorem{lem}[equation]{Lemma}
\newtheorem{cor}[equation]{Corollary}
\theoremstyle{definition}
\newtheorem{rem}[equation]{Remark}
\newtheorem{example}[equation]{Example}
\newtheorem{dfn}[equation]{Definition}
\newtheorem{assumption}[equation]{Assumption}
\newtheorem{ntt}[equation]{}
\newcommand{\codim}{\operatorname{codim}}
\newcommand{\SB}{\mathop{\mathrm{SB}}}
\newcommand{\OGr}{\mathrm{OGr}}
\newcommand{\rk}{\operatorname{rk}}
\newcommand{\CH}{\mathop{\mathrm{CH}}\nolimits}
\newcommand{\SO}{\operatorname{\mathrm{SO}}}
\newcommand{\Spin}{\operatorname{\mathrm{Spin}}}
\newcommand{\PGL}{\operatorname{\mathrm{PGL}}}
\newcommand{\Gm}{\operatorname{\mathbb{G}_m}}
\newcommand{\E}{\mathrm{E}}
\newcommand{\sing}{\mathrm{sing}}
\newcommand{\Ch}{\mathop{\mathrm{Ch}}\nolimits}
\newcommand{\res}{\mathop{\mathrm{res}}\nolimits}
\newcommand{\pr}{\operatorname{\mathrm{pr}}}
\newcommand{\mult}{\operatorname{mult}}
\newcommand{\Char}{\mathop{\mathrm{char}}\nolimits}
\newcommand{\id}{\mathrm{id}}
\newcommand{\imm}{\mathrm{Im}}
\newcommand{\zz}{\mathbb{Z}}
\newcommand{\laz}{\mathbb{L}}
\newcommand{\F}{\mathrm{F}}
\newcommand{\G}{\mathrm{G}}
\newcommand{\B}{\mathrm{B}}
\newcommand{\ff}{\mathbb{F}}
\newcommand{\Spec}{\operatorname{Spec}}
\newcommand{\End}{\operatorname{End}}
\newcommand{\pt}{\mathrm{pt}}
\newcommand{\Ker}{\operatorname{Ker}}
\newcommand{\D}{\mathrm{D}}
\title
[Hopf-theoretic approach to motives]
{Hopf-theoretic approach to motives of twisted flag varieties}
\author
[Victor Petrov]
{Victor Petrov}
\author
[Nikita Semenov]
{Nikita Semenov}
\address{Semenov:
Mathematisches Institut der Universit\"at M\"unchen, Theresienstr. 39, D-80333 M\"unchen, Germany}
\email{semenov@math.lmu.de}
\address{Petrov: St.~Petersburg State University, 29B Line 14th (Vasilyevsky Island), 199178, St.~Petersburg, Russia}
\email{victorapetrov@googlemail.com}
\subjclass[2010]{20G15, 14C15}
\keywords
{Linear algebraic groups, twisted flag varieties, oriented cohomology theories, Hopf algebras, motives.}
\thanks{The first author was supported by Laboratory of Modern Algebra and Applications, St.~Petersburg State University via grant of the government of the Russian Federation for the state support
of scientific research carried out under the supervision of leading scientists, agreement 14.W03.31.0030 dated 15.02.2018, by Young Russian Mathematics award and by RFBR grant 18-31-20044.
The second author acknowledges the support of the SPP 1786 ``Homotopy theory and algebraic geometry'' (DFG)}
\begin{document}

\begin{abstract}
Let $G$ be a split semisimple algebraic group over a field and let $A^*$ be an oriented cohomology theory in the sense of Levine--Morel.
We provide a uniform approach to the $A^*$-motives of geometrically cellular smooth projective $G$-varieties based on the Hopf algebra structure of $A^*(G)$. Using this approach we provide various applications to the structure of motives of twisted flag varieties.
\end{abstract}

\maketitle

\section{Introduction}

\subsection{Overview of motives}\label{sec-overview}

Chow motives were introduced by Alexander Grothendieck in the 1960s, and they have since
become a fundamental tool for investigating the structure of algebraic varieties.
Computing Chow motives has also proved to be valuable for addressing questions
on other topics: for example, Voevodsky's proof of the Milnor conjecture relies on Rost's computation of
the motive of a Pfister quadric. More generally, the structure of Chow motives of norm varieties plays a crucial role in the proof of the Bloch--Kato conjecture by Rost and Voevodsky.

Applications of Chow motives include, among others, results
on higher Witt indices of quadratic forms \cite{Ka03}, structure of the powers of the fundamental ideal
in the Witt ring \cite{Ka04}, cohomological invariants of algebraic groups \cite{GPS16}, \cite{S16}, Kaplansky's problem on the $u$-invariants of fields \cite{Vi07}, and isotropy of involutions \cite{KaZ13}.

Chernousov, Gille, and Merkurjev established the structure of Chow motives of isotropic twisted flag varieties in \cite{CGM05}. Later Brosnan provided motivic decompositions of twisted flag varieties which are homogeneous under an {\it isotropic} group $G$ (see \cite{Br05}).
Petrov, Semenov, and Zainoulline
established the structure of Chow motives of generically split twisted flag varieties and introduced an invariant of algebraic groups,
called the {\it $J$-invariant} (see \cite{PSZ08}, \cite{PS10}, \cite{PS12}). In the case of quadratic forms an equivalent invariant was introduced previously by Vishik in \cite{Vi05}.
The $J$-invariant allowed, in particular, to construct a new cohomological invariant for groups of type $\mathrm{E}_8$ and
to solve a problem of Serre about groups of type $\mathrm{E}_8$ and
its finite subgroups (see \cite{GS10} and \cite{S16}).

Besides, Garibaldi, Petrov and Semenov used decompositions of Chow motives to relate the rationality of some parabolic subgroups of groups of type $\E_7$ with the Rost invariant, proving a conjecture of Rost and solving a question of Springer in \cite{GPS16}.

There exist also applications of motives with respect to other oriented cohomology theories to algebraic groups.
For example, Panin \cite{Pa94} related the $K^0$-motives of twisted flag varieties and Tits algebras of algebraic groups, generalizing some result of Quillen and Swan.

More recently, Sechin and Semenov used Morava motives to obtain new estimates on torsion in the Chow groups of quadrics (see \cite{SeS20}).

\subsection{}

Let $G$ be a split semisimple algebraic group over a field and let $A^*$ be an oriented cohomology theory in the sense of Levine--Morel. In the present article we provide a uniform approach to the $A^*$-motives of geometrically cellular smooth projective $G$-varieties based on the Hopf algebra structure of $A^*(G)$. Using this approach we provide various applications to the structure of motives of twisted flag varieties.

In particular,

$\bullet$ we show in Theorem~\ref{comod-preserving} that the rationality of cycles is compatible with the comodule structure induced by the Hopf algebra structure of $A^*(G)$;

$\bullet$ we provide in Theorem~\ref{mainthm} a motivic decomposition of the $A^*$-motive of the variety of Borel subgroups for theories $A^*$ satisfying certain axioms;

$\bullet$ we give in Theorem~\ref{RpE} a combinatorial criterion when the Chow motive of a twisted flag variety contains as a direct summand the upper motive of the variety of Borel subgroups;

$\bullet$ we present in Theorem~\ref{conn-quad} new connections in the Chow motives of quadrics which were previously unknown.

We provide now an overview of results of the present article in more details.

\subsection{Unification of motives for different oriented cohomology theories}

Let $A^*$ be an oriented cohomology theory in the sense of Levine--Morel \cite{LM07}, let $G$ be a split semisimple algebraic group over a field $F$, let $B$ be a Borel subgroup of $G$, let $W$ be the Weyl group of $G$, and let $E$ be a $G$-torsor over $F$.

In this article we provide a uniform approach to the $A^*$-motives of twisted flag varieties based on the {\it Hopf algebra} structure of $A^*(G)$.

For example, it is known that the Chow motive of $E/B$ modulo a prime number $p$ is a direct sum of Tate twists of the same indecomposable motive $R_p(E)$, whose structure is described in \cite{PSZ08} in terms of the $J$-invariant of $E$ (see Section~\ref{jinvold}). The rank of $R_p(E)$ can be also expressed in terms of the $J$-invariant and is usually big.

On the other hand, the $K^0$-motive of $E/B$ is a direct sum of $|W|$ indecomposable motives, which in general are not Tate twists of each other. These motives are related to the Tits algebras of $E$ as described in \cite{Pa94} and all of them have rank one.

In the context of the algebraic cobordism theory of Levine--Morel both the Chow theory and the $K^0$-theory are free oriented cohomology theories arising from the same construction with respect to an additive or a multiplicative formal group law. Therefore, at first glance it seems very surprising that the structure of the Chow motives and of $K^0$-motives of $E/B$ are so different.

We provide an explanation of this phenomenon in terms of the coproduct structure of $A^*(G)$. Note that the coproduct structures of $K^0(G)$ and of $\CH^*(G)$ are different even for groups of small rank, like $\PGL_2$.

Moreover, our approach allows to give a definition of the {\it $J$-invariant} for an arbitrary oriented cohomology theory $A^*$ satisfying certain axioms.
We define the $J$-invariant as a quotient of the bialgebra $A^*(G)$ by a certain concrete bi-ideal, which depends on the torsor $E$ (see Definition~\ref{defj}). For example, this bi-ideal is zero if the torsor $E$ is generic. In the case of the Chow motives this definition is equivalent to the old one given in \cite{PSZ08}.

Furthermore, it turns out that the motivic decomposition of $E/B$ with respect to a theory $A^*$ has two layers. The first layer is determined by the $J$-invariant and the second layer is determined by the structure of finitely generated projective modules over the dual algebra (in the sense of Hopf algebras) of the $J$-invariant. This second layer is empty for Chow motives (and therefore remained hidden), but, for example, it is not empty for $K^0$-motives. This provides a conceptual explanation why, opposite to the case of Chow motives, there can be substantially different isomorphism classes of indecomposable direct summands of the $K^0$-motive of $E/B$.

Note that the most of our results can be also applied to arbitrary twisted flag varieties, not necessarily of the form $E/B$ and some our results can be applied more generally to twisted forms of arbitrary cellular varieties equipped with an action of the group $G$ (see e.g. Theorem~\ref{comod-preserving}).

\subsection{Applications to Chow motives}
\subsubsection{Excellent connections of Vishik}
In his celebrated article \cite{Vi11} Vishik shows that the Chow motive of an arbitrary anisotropic quadric $Q$ of dimension $n$ over a field $F$ has
at least the same connections as an anisotropic excellent quadric $P$ of the same dimension as $Q$.
More precisely, Vishik defines an invariant of $Q$ called the motivic decomposition type. Namely, the Chow motive of $Q$ splits over $\overline F$ as a direct sum of Tate motives $M(\overline Q)\simeq\oplus_{\lambda\in\Lambda(Q)}\zz\{\lambda\}$, where $$\Lambda(Q)=\{0,1,\ldots,[n/2]\}\sqcup\{n-[n/2],\ldots,n-1,n\}.$$ If $N$ is a direct summand of the Chow motive $M(Q)$ over $F$, then the motive $N$ splits over $\overline F$ as $\oplus_{i\in\Lambda(N)}\zz\{i\}$
for some $\Lambda(N)\subset\Lambda(Q)$, and one says that $\lambda,\mu\in\Lambda(Q)$ are connected, if for every direct summand $N$ of the Chow motive $M(Q)$ over $F$ one has that either both $\lambda,\mu\in\Lambda(N)$ or both $\lambda,\mu\not\in\Lambda(N)$ (see Definition~\ref{dfnvishik}).

Vishik shows in \cite{Vi11} that if $\lambda,\mu$ are connected in the Chow motive of an anisotropic {\it excellent} quadric $P$ of dimension $n$, then they are connected in the Chow motive of every anisotropic $n$-dimensional quadric over $F$. Since the motivic decompositions of excellent quadrics are known, this provides explicit restrictions on the motivic decomposition type of quadrics. This result has further applications discussed in \cite{Vi11}.

In the present article using our approach we provide new connections in the Chow motives of quadrics which were previously unknown (see Theorem~\ref{conn-quad}). Our connections are usually complementary to Vishik's excellent connections and thus, combining both of them one gets stronger restrictions on the motivic decomposition type of quadrics. Note that Vishik's approach to excellent connections relies on the Steenrod operations. Thus, one can view the coproduct structure as a complementary tool to Steenrod operations.

\subsubsection{$J$-invariant, motivic decompositions and rational cycles for the Chow theory}
In the case of the Chow motives
our approach to the $J$-invariant is more conceptual than in \cite{PSZ08} and, in our opinion, the proof of the motivic decomposition of $E/B$ (Corollary~\ref{maincor}) is more simple than the original proof given in \cite{PSZ08}.

We show that the realizations of rational cycles respect the coproduct structure of the $J$-invariant, and
using this we provide new motivic decompositions of Chow motives of (not necessarily generically split) twisted flag varieties including all varieties of type $\E_8$ at the prime $3$, and we obtain new restrictions on rational cycles and on the $J$-invariant (see Section~\ref{sec-except}).

In fact, our method leads to a simplified proof of the Rost conjecture for groups of type $\E_7$ (see Section~\ref{sec-overview} above). The crucial point in the proof of this conjecture was Lemma~10.8 of \cite{GPS16}, where we did extensive computations using Steenrod operations. This lemma was used once in \cite{GPS16} to compute the Chow motives of some $\E_7$-varieties. We do not reprove Rost's conjecture in this article (since it is already proved in \cite{GPS16}), but as an illustration we prove Proposition~\ref{dec-e7} which contains one of the motivic decompositions needed for the proof of Rost's conjecture. Other necessary motivic decompositions can be obtained in a similar manner avoiding the use of Lemma~10.8 of \cite{GPS16}.

\subsubsection{Applications to upper motives}
In \cite{Ka13} Karpenko introduced the notion of upper motives and proved that any indecomposable direct summand of the Chow
motive of a twisted flag variety of inner type is isomorphic to a Tate twist of the
upper motive of another twisted 
flag variety. Thus, the study of motivic decompositions
of twisted 
flag varieties is reduced to the study of the upper motives.

In Theorem~\ref{RpE} we provide a necessary and sufficient criterion when the Chow motive of a twisted flag variety contains as a direct summand the upper motive of the variety of Borel subgroups. We also use this criterion in Section~\ref{sec-except} to compute the Chow motives of some exceptional varieties.

\subsection{Coproduct structure of the Chow theory of algebraic groups}
There exists an extensive literature mostly of a Japanese mathematical school devoted to computations of the coproduct structure on $H^*_{\sing}(G)$ for a split semisimple complex group $G$ (see e.g. \cite{IKT76}, \cite{KM77}, \cite{MT78}, \cite{MZ77}).

In this article we provide in Section~\ref{sec-coac} a new method to compute the coproduct structure for $\CH^*(G)$, where $G$ is a split semisimple group over an arbitrary field (of an arbitrary characteristic).

Namely, in Sections~\ref{sec-gen}, \ref{sec-quad} and \ref{sec-except} we get formulae for the coproduct using motivic decompositions of twisted flag varieties which are homogeneous under an {\it isotropic} group. Motivic decompositions in this situation are  given in \cite{CGM05} and \cite{Br05}. For computations we use graphical interpretation of these decompositions based on cutting the Hasse diagrams along edges described in \cite{Se07}. Various Hasse diagrams are provided in the Appendix of \cite{PSV98}.

\subsection{Applications to Morava motives}

The $K^0$-motives of twisted flag varieties were computed by Panin in  \cite{Pa94} generalizing previous results of Quillen \cite[Section~8]{Qui73} and Swan \cite[Theorem~1]{Sw85}. 
In Section~\ref{secother}
we recover decompositions of the
$K^0$-motives of some twisted flag varieties using the method of the present article.

Finally, we illustrate in Section~\ref{secother} the methods developed in this article by calculating
the Morava motives of some twisted 
flag varieties for which this computation
was previously not possible.

{\bf Acknowledgements.} We would like to thank Stefan Gille, Alexander Henke, and Alexander Vishik for discussions and e-mail conversations on the subject of the article.

\section{Background on oriented cohomology theories and motives}

Consider an oriented cohomology theory $A^*$ in the sense of Levine--Morel over a field $F$ (see \cite{LM07}). Throughout the article we assume that the theory $A^*$ is generically constant (see \cite[Definition~4.4.1]{LM07}) and satisfies the localization property \cite[Definition~4.4.6]{LM07}. Moreover,  we assume in the article that $\mathrm{char}\,F=0$ except when $A^*$ is the Chow theory in which case the characteristic of $F$ can be arbitrary.

If $\Char F=0$, we consider the algebraic cobordism $\Omega^*$ of Levine--Morel. By \cite[Theorem~1.2.6]{LM07} the algebraic cobordism is a universal oriented cohomology theory, i.e. for every oriented cohomology theory $A^*$ over $F$ there
is a (unique) morphism of theories 
$\Omega^* \to A^*$.

Each oriented cohomology theory $A^*$ is equipped with a $1$-dimensional commutative
formal group law. For example, for the Chow theory $\CH^*$ this is the additive formal
group law, for Grothendieck's $K^0[\beta,\beta^{-1}]$ this is the multiplicative formal group law and for $\Omega^*$ the universal formal
group law.

The canonical morphism $\mathbb{L}\to\Omega^*(\Spec F)$ from the Lazard ring is an isomorphism (see \cite[Theorem~1.2.7]{LM07}).
It is well known that $\laz\simeq\zz[t_1,t_2,\ldots]$ with $\deg t_i=-i$.

\begin{dfn}[Levine--Morel, {\cite[Remark~2.4.14(2)]{LM07}}]
Let $S$ be a commutative
ring, let $\mathcal{F}_S$ be a formal group law over $S$ and let $\laz \to S$ be the respective ring morphism.
Then 
$\Omega^* \otimes_\laz S$ is an oriented cohomology theory which is called a {\it free
theory}. Its ring of coefficients is $S$, and its associated formal group law is $\mathcal{F}_S$.
\end{dfn}

For example, the Chow theory and $K^0[\beta,\beta^{-1}]$ are free theories (see \cite[Theorem~1.2.18 and 1.2.19]{LM07}).

\begin{ntt}[Morava $K$-theory]
If $\Char F=0$, we consider for a prime number $p$ and a natural number $n$ the $n$-th Morava $K$-theory $K(n)^*$ with respect to $p$.
Notice that we do not include $p$ in the notation.
We define this theory as a free theory
with the coefficient ring $\ff_{p}[v_n,v_n^{-1}]$ where $\deg v_n=-(p^n-1)$
and with a formal group law of height $n$.

If $n=1$, there exists a functorial (with respect to pullbacks) isomorphism of algebras
$K(1)^*(-)/(v_1-1)\simeq K^0(-)\otimes\ff_{p}$, which can be 
obtained with the help of the Artin--Hasse exponent.

By \cite[Appendix 2]{Ra} there is a split surjective graded ring homomorphism $$\varphi\colon\laz_{(p)}\to\zz_{(p)}[v_1,v_2,\ldots]$$ with $\deg v_i=-(p^i-1)$ which classifies the formal group laws which are $p$-typical. In particular, one can consider $v_i$ as an element in $\laz_{(p)}$. Moreover, the composition
$$\laz_{(p)}\xrightarrow{\varphi}\zz_{(p)}[v_1,v_2,\ldots]\to\ff_p[v_n,v_n^{-1}],$$
where the second map is the canonical projection followed by a localization,
defines the formal group law for the Morava $K$-theory (sometimes also called the Honda formal group law).
\end{ntt}

\begin{ntt}[Motives]
For a theory $A^*$ we consider the category of {\it $A^*$-motives}
which is defined exactly in the same way as the category of Grothendieck's Chow motives with $\CH^*$
replaced by $A^*$
(see \cite{Ma68}, \cite{EKM}). In particular, the morphisms between two smooth
projective irreducible varieties $X$ and $Y$ over $F$ are given by
$A^{\dim Y}(X\times Y)$.

We denote the motive of a smooth projective variety $X$ over a field $F$ by $M(X)$, and we write $A^*(\pt)$ for the motive of $\pt=\Spec F$. For a motive $M$ we denote its Tate twists by $M\{m\}$.
\end{ntt}

\begin{ntt}[Rost Nilpotence]\label{rostnilp}
Let $A^*$ be an oriented cohomology theory and consider the category of $A^*$-motives over $F$.
Let $M$ be an $A^*$-motive over $F$. We say that the Rost nilpotence principle holds for $M$,
if the kernel of the restriction homomorphism $$\End(M)\to\End(M_L)$$ consists of nilpotent correspondences for all field extensions $L/F$.

By \cite[Section~8]{CGM05} Rost nilpotence holds for the Chow motives of all twisted flag varieties and by \cite[Corollary~3.5]{GiV18}
Rost nilpotence holds for the $A^*$-motives of all twisted flag varieties for every free oriented cohomology theory $A^*$.

\end{ntt}

\begin{ntt}[Cellular varieties] 
In this article we consider smooth projective cellular varieties over a field $F$. We say that a smooth projective variety $X$ is cellular, if it possesses a stratification by closed (not necessarily smooth) subvarieties $$\emptyset=X_{-1}\subset X_0\subset\ldots\subset X_n=X$$ such that for all $i$ there is an isomorphism $X_i\setminus X_{i-1}\simeq\mathbb{A}^{n_i}$ for some $n_i\ge 0$.

Let $A^*$ be an oriented cohomology theory satisfying the localization axiom. Then the $A^*$-motive of $X$ is a direct sum of Tate motives (see \cite{NZ06}). Moreover, the K\"unneth formula holds for $X$. Namely, if $Y$ is an arbitrary smooth variety, then $$A^*(X\times Y)\simeq A^*(X)\otimes_{A^*(\pt)} A^*(Y).$$
\end{ntt}

\section{Hopf-theoretic background}

Let $G$ be a split semisimple algebraic group over a field $F$, let $T$ be a split maximal torus of $G$, let $B$ be a Borel subgroup of $G$ containing $T$ and let $A^*$ be an oriented cohomology theory. It is well-known that the multiplication in $G$ induces the structure of a commutative, graded Hopf algebra on $A^*(G)$ over $A^*(\pt)$.

\begin{ntt}[Ring $A^*(G)$]\label{sec31}
If $A^*$ is an oriented cohomology theory, there exists an algorithm to compute the ring structures of $A^*(G)$ and of $A^*(G/B)$. Indeed, it suffices to determine the ring structure for $A^*=\Omega^*$.

First of all, since the variety $G/B$ is cellular, $\Omega^*(G/B)$ is a free $\laz$-module. Its free generators can be parametrized by the elements of the Weyl group $W$ of $G$. More precisely, for each $w\in W$ one fixes its reduced word decomposition and associates with it a certain class $Z_w\in\Omega^{l(w)}(G/B)$, where $l(w)$ denotes the length of $w$, which is a Bott--Samelson resolution of singularities of a Schubert subvariety of $G/B$ corresponding to $w$ (see \cite{CPZ13}). The class $Z_w$ depends on a particular choice of a reduced decomposition of $w$, but abusing notation we omit it in our notation. Then $Z_w$, $w\in W$, form a free basis of $\Omega^*(G/B)$.

Let $\B T$ denote the classifying space of $T$. There is a characteristic map
\begin{align}
c\colon\Omega^*(\B T)\to\Omega^*(G/B)
\end{align}
which is a ring homomorphism. Besides, the pullback of the canonical projection ${G\to G/B}$ induces a ring homomorphism $\pi\colon \Omega^*(G/B)\to \Omega^*(G)$.

It follows from \cite[Proposition~5.1]{GiZ12}
that the sequence
$$\Omega^*(\B T)\xrightarrow{c}\Omega^*(G/B)\xrightarrow{\pi} \Omega^*(G)$$ of graded rings
is right exact (i.e. $\pi$ is surjective and its kernel is the ideal of $\Omega^*(G/B)$ generated by the elements of positive degrees in the image of $c$). Then the explicit combinatorial description of the map $c$ given in \cite{CPZ13} allows to compute explicitly the ring structure of $\Omega^*(G)$.

In particular, since $\Omega^*(G/B)$ has finite rank over $\laz$ (namely, the rank equals $|W|$), the module $\Omega^*(G)$ is finitely generated over $\laz$ (and hence $A^*(G)$ is a finitely generated $A^*(\pt)$-module for every oriented cohomology theory $A^*$).

We remark, however, that the existing algorithms are not feasible for explicit computations for groups of a big rank.

Nevertheless, one can find in the literature an explicit description of the ring structure of $A^*(G)$ for some oriented cohomology theories $A^*$ and some groups $G$. For example, Merkurjev computes in \cite{Me97}
$K^0(G)$ for all split semisimple groups $G$, and Yagita provides some computations of algebraic cobordism in \cite{Ya05} (see Section~\ref{secother} below for some concrete examples).
\end{ntt}

\begin{ntt}[Structure of Hopf algebras]\label{strHopf}
A celebrated theorem of Borel asserts that every commutative, graded (by the non-negative integers) connected finite dimensional bialgebra over a finite field $\ff_p$ is isomorphic as an algebra to $\ff_p[e_1,\ldots,e_r]/(e_1^{p^{k_1}},\ldots,e_r^{p^{k_r}})$ for some integers $r$, $k_i$ and some homogeneous generators $e_i$ (see \cite[Theorem~7.11 and Proposition~7.8]{MM65}).

For example, if $A^*=\Ch^*:=\CH^*\otimes\ff_p$
is the Chow ring modulo $p$, then this agrees with formulae for $\Ch^*(G)$ from \cite{Kac85}.
\end{ntt}

\begin{ntt}[$J$-invariant for Chow motives]\label{jinvold}
For a fixed prime $p$ we denote by $\Ch^*:=\CH^*\otimes\ff_p$ the Chow ring modulo $p$.
Let $G$ be a split semisimple algebraic group over a field $F$, $B$ a Borel subgroup of $G$ and $E$ a $G$-torsor over $F$. Then $$\Ch^*(G)\simeq \ff_p[e_1,\ldots,e_r]/(e_1^{p^{k_1}},\ldots,e_r^{p^{k_r}})$$
for some integers $r$, $k_i$ and with $\deg e_i=:d_i$. We assume that the sequence of $d_i$ is non-decreasing.

We introduce an order on the set of additive generators 
of $\Ch^*(G)$, i.e., on the monomials $e_1^{m_1}\ldots e_r^{m_r}$. 
To simplify the notation, we denote the monomial
$e_1^{m_1}\ldots e_r^{m_r}$ by $e^M$, where $M$ is an $r$-tuple
of integers $(m_1,\ldots,m_r)$. The codimension (in the Chow ring) of
$e^M$ is denoted by $|M|$. Observe that $|M|=\sum_{i=1}^rd_im_i$.

Given two $r$-tuples $M=(m_1,\ldots,m_r)$ and $N=(n_1,\ldots,n_r)$ we say
$e^M\le e^N$ (or equivalently $M\le N$) if either $|M|<|N|$, or $|M|=|N|$ and 
$m_i\le n_i$ for the greatest $i$ such that $m_i\ne n_i$.
This gives a well-ordering on the set of all monomials ($r$-tuples).

\begin{dfn}[{\cite[Definition~4.6]{PSZ08}}]\label{def71}
Denote as $\overline{\Ch}^*(G)$ the image of the composite map
$$
\Ch^*(E/B)\xrightarrow{\res} \Ch^*(G/B) \xrightarrow{\pi} \Ch^*(G),
$$
where $\pi$ is the pullback of the canonical projection $G/B\to G$ and $\res$ is the scalar extension to a splitting field of the torsor $E$.

For each $1\le i\le r$ set $j_i$ to be the smallest non-negative
integer such that the subring $\overline{\Ch}^*(G)$ contains an element $a$ 
with the greatest monomial $e_i^{p^{j_i}}$ 
with respect to the order on $\Ch^*(G)$ as above, i.e.,
of the form 
$$
a=e_i^{p^{j_i}}+\sum_{e^M\lneq e_i^{p^{j_i}}} c_M e^M, \quad c_M\in\ff_p.
$$
The $r$-tuple of integers $(j_1,\ldots,j_r)$ is called
the {\it $J$-invariant} of $E$ modulo $p$ and is denoted by $J(E)$ or $J_p(E)$.
Note that $j_i\le k_i$ for all $i$.
\end{dfn}

By \cite{PSZ08} the Chow motive of $E/B$ with coefficients in $\ff_p$ decomposes in a direct sum of Tate twists of an indecomposable motive $R_p(E)$, and the Poincar\'e polynomial of $R_p(E)$ over a splitting field of $E$ equals
\begin{equation}\label{fpoin}
\prod_{i=1}^r\frac{t^{d_ip^{j_i}}-1}{t^{d_i}-1},
\end{equation}
where $(j_1,\ldots,j_r)$ is the $J$-invariant of $E$.

In the case of quadratic forms, i.e. when $G$ is a special orthogonal group and $E$ is a $G$-torsor, there is an equivalent notion of the $J$-invariant introduced by Vishik in \cite{Vi05}. Namely, the torsor $E$ corresponds to a non-degenerate quadratic form $q$ with trivial discriminant of dimension $2m+1$ or $2m+2$. The maximal orthogonal Grassmannian $\OGr(\max,\bar q)$ of the quadratic form $\bar q=q\times_F \overline F$ has for $p=2$ certain concrete generators ${z_k\in\Ch^k(\OGr(\max,\bar q))}$, $k=0,\ldots,m$, and the $J$-invariant $J(q)$ of $q$ is defined as the set of those $k\in\{0,\ldots,m\}$ such that $z_k$ are rational, and $0\not\in J(q)$, if $\dim q$ is odd (see \cite[Definition~5.11]{Vi05}, \cite[\S88]{EKM}).
Besides, one has $J(q)=\begin{cases}
\{1,\ldots,m\}\setminus J', & \text{if }\dim q=2m+1,\\
\{0,\ldots,m\}\setminus J', & \text{if }\dim q=2m+2,
\end{cases}$

\noindent
where $J'=\{2^ld_i\mid i=1,\ldots,r;\, 0\le l\le j_i-1\}$ with $d_i=2i-1$, $r=[\frac{m+1}{2}]$ and $J_2(E)=(j_1,\ldots,j_r)$. These formulae allow to switch between different definitions of the $J$-invariant in the case of quadratic forms.
\end{ntt}

\begin{ntt}[Demazure operators]\label{secdem}
In this section we follow \cite{CPZ13} (cf. \cite{De73}, \cite{De74}).
Let $E$ be a $G$-torsor over $\Spec F$. For each simple root $\alpha_i$ consider the natural projection $\pi_i\colon E/B\to E/P_{\{i\}}$, where $P_{\{i\}}$ is the parabolic subgroup corresponding to the root $\alpha_i$. Set $\kappa_i=G(x_{\alpha_i},x_{-\alpha_i})\in A^*(\B T)$,
where
$$
G(x,y)=\frac{x+y-\mathcal{F}_A(x,y)}{xy},
$$
$\mathcal{F}_A$ is the formal group law of the theory $A^*$, and $x_\alpha$ is the image of the generator of $A^1(\B\Gm)$ under the map
$$
A^*(\B\Gm)\to A^*(\B T)\to A^*_T(E)\simeq A^*(E/B),
$$
where the first map is induced by $\alpha$.

Define the operator $C_i$ on $A^*(E/B)$ to be the composition $\pi_i^*(\pi_i)_*$ and $\Delta_i$ by the formula $\Delta_i(x)=\kappa_i x-C_i(x)$. Note that $C_i$ and $\Delta_i$ commute with morphisms of oriented theories. Denote by $\tilde\varepsilon$ the pullback map to the generic point $A^*(E/B)\to A^*(\pt)$.

In the particular case of a trivial torsor we have the following results.

\begin{lem}\label{lemDiff}
The operators $s_i=\id-x_{\alpha_i}\Delta_i$ are ring homomorphisms defining a representation of the Weyl group on $A^*(G/B)$ over $A^*(\pt)$, and the following Leibniz rule holds:
$$
\Delta_i(uv)=\Delta_i(u)v+s_i(u)\Delta_i(v).
$$
\end{lem}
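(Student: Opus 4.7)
The plan is to exploit the $\mathbb{P}^1$-bundle structure $\pi_i\colon G/B \to G/P_{\{i\}}$ together with the projective bundle formula for oriented cohomology theories. I would first write $A^*(G/B)$ as a free $A^*(G/P_{\{i\}})$-module of rank two spanned by $\{1, x_{\alpha_i}\}$ (via pullback along $\pi_i$). By the projection formula, $C_i = \pi_i^*(\pi_i)_\ast$ is $\pi_i^*A^*(G/P_{\{i\}})$-linear, and hence so is $\Delta_i = \kappa_i\cdot\id - C_i$. A direct calculation on the two-element basis, using the normalization $\kappa_i = G(x_{\alpha_i}, x_{-\alpha_i})$, yields $\Delta_i(1) = 0$ (whence $s_i(1) = 1$) and $s_i(x_{\alpha_i}) = x_{-\alpha_i} = [-1]_{F_A}(x_{\alpha_i})$, the expected formal Weyl reflection.

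For the Leibniz rule, $\pi_i^*A^*(G/P_{\{i\}})$-linearity of $\Delta_i$ reduces the identity
$$\Delta_i(uv) = \Delta_i(u)v + s_i(u)\Delta_i(v)$$
to the four pairs $(u,v) \in \{1, x_{\alpha_i}\}^2$. Three cases are immediate from $\Delta_i(1) = 0$; the remaining $u=v=x_{\alpha_i}$ reduces, after expressing $x_{\alpha_i}^2$ in the basis via the projective-bundle relation, to a routine FGL identity. Once the Leibniz rule is in place, the ring homomorphism property of $s_i$ is purely formal:
$$s_i(uv) = uv - x_{\alpha_i}\bigl(\Delta_i(u)v + s_i(u)\Delta_i(v)\bigr) = s_i(u)\bigl(v - x_{\alpha_i}\Delta_i(v)\bigr) = s_i(u)\,s_i(v),$$
and $s_i(1) = 1$ gives unitality.

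It remains to verify the Weyl group relations. The involution $s_i^2 = \id$ translates via the explicit formulas on the two-element basis into $C_i^2 = \kappa_i C_i$ (equivalently $\Delta_i^2 = 0$), which is again a short direct calculation. For the braid relations $s_is_js_i\cdots = s_js_is_j\cdots$ (for $i \neq j$), my plan is to verify them first on the image of the characteristic map $c\colon A^*(\B T) \to A^*(G/B)$, on which the $s_i$ act by the standard FGL-twisted Weyl reflection $x_\alpha \mapsto x_{s_i\alpha}$; the braid relations then follow on $A^*(\B T)$ from the Coxeter relations in $W$ as in the analysis of \cite{CPZ13}. To extend them to all of $A^*(G/B)$ one uses that the $s_i$ are $A^*(\pt)$-linear ring homomorphisms, combined with a reduction to the universal theory $\Omega^*$ and the description of $\Omega^*(G/B)$ via Bott--Samelson classes together with the characteristic map recalled in Section~\ref{sec31}. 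The main obstacle is precisely this final extension: since the characteristic map is in general not surjective, the propagation of the braid relations from $c(A^*(\B T))$ to all of $A^*(G/B)$ is the delicate step, and is exactly where the formal group law machinery of \cite{CPZ13} does the essential work.
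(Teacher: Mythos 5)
Your strategy for the Leibniz rule is genuinely different from the paper's and is a reasonable route: the paper reduces everything to a computation on $A^*(\B T)$ via the characteristic map, whereas you work directly with the $\mathbb{P}^1$-bundle $\pi_i\colon G/B\to G/P_{\{i\}}$, using the projection formula to get $\pi_i^*A^*(G/P_{\{i\}})$-linearity of $\Delta_i$ and then checking the identity on a rank-two basis. That part is sound and arguably more self-contained. The derivation of multiplicativity of $s_i$ from the Leibniz rule is also correct, and is the same formal manipulation the paper effectively relies on.

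The genuine gap is exactly the one you flag at the end and then leave unresolved: extending the braid relations (and, for that matter, the Leibniz rule, if one had wanted to prove it via the characteristic map) from the image of $c\colon A^*(\B T)\to A^*(G/B)$ to all of $A^*(G/B)$. You correctly observe that $c$ is not surjective in general, but you do not say how to get around this, only that the ``formal group law machinery'' should do it. The paper's resolution is concrete and crucial: first reduce to the universal theory $\Omega^*$; then observe that $\Omega^*(G/B)$ is \emph{torsion free}, so it injects into $\Omega^*(G/B)\otimes_{\zz}\zz[t^{-1}]$, where $t$ is the torsion index of $G$; and after inverting $t$ the characteristic map \emph{is} surjective (by \cite[Corollary~13.10]{CPZ13}). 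Thus an operator identity verified on $A^*(\B T)$ propagates to $\Omega^*(G/B)\otimes\zz[t^{-1}]$ by surjectivity of $c$, then descends to $\Omega^*(G/B)$ by torsion-freeness, and finally specializes to every $A^*$. Without this torsion-index inversion step your argument for the braid relations does not close; you should either import it, or find a different generating set for $A^*(G/B)$ on which the $s_i$ can be controlled.
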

\begin{proof}
It suffices to show the formula for the algebraic cobordism, and moreover, since $\Omega^*(G/B)$ is torsion free, for $A^*=\Omega^*\otimes_\zz \zz[t^{-1}]$, where $t$ is the torsion index of $G$. In this situation the characteristic map
\begin{equation}\label{charmap}
c\colon A^*(\B T)\simeq A^*_T(\pt)\to A^*_T(G)\simeq A^*(G/B)
\end{equation}
is surjective (see \cite[Corollary~13.10]{CPZ13}), and it is enough to verify the formula at the level of $A^*(\B T)$. But the action of $s_i$'s on $A^*(\B T)$ coincides with the usual Weyl group action (cf. \cite[Definition~3.5]{CPZ13}), and the formula is verified in \cite[Proposition~3.8]{CPZ13}.
\end{proof}

The operators $s_i$ from the above lemma are called simple reflections.

\begin{lem}\label{lemDem}
Let $x$ be an element from $A^*(G/B)$. If for every sequence $i_1,\ldots,i_n$ with $n\le\dim G/B$ we have
$$
\tilde\varepsilon\circ\Delta_{i_1}\circ\ldots\circ\Delta_{i_n}(x)=0,
$$
then $x=0$.
\end{lem}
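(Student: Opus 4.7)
The plan is to show that the collection of functionals $x\mapsto\tilde\varepsilon(\Delta_{i_1}\circ\cdots\circ\Delta_{i_n}(x))$ separates elements of $A^*(G/B)$ by reducing to a triangular linear system in an explicit basis.

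First, as recalled in Section~\ref{sec31}, the variety $G/B$ is cellular and $A^*(G/B)$ is a free $A^*(\pt)$-module of rank $|W|$. A concrete basis is provided by the Bott--Samelson classes $\{Z_w\}_{w\in W}$ associated to fixed reduced decompositions $I_w$ of each element $w\in W$ (see \cite{CPZ13}). I would write $x=\sum_{w\in W}a_w Z_w$ with $a_w\in A^*(\pt)$, so that the goal becomes to show that every $a_w$ vanishes.

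Second, I would use the following triangularity property of the Bott--Samelson basis with respect to iterated Demazure operators. Fix a total ordering of $W$ refining the length function; then for every $w\in W$ one can exhibit a sequence $J(w)=(j_1,\ldots,j_m)$ of simple indices such that $\tilde\varepsilon(\Delta_{j_1}\circ\cdots\circ\Delta_{j_m}(Z_w))$ is an invertible element of $A^*(\pt)$ while $\tilde\varepsilon(\Delta_{j_1}\circ\cdots\circ\Delta_{j_m}(Z_v))=0$ for all $v<w$. Heuristically, the sequence $J(w)$ is chosen to complete the fixed word $I_w$ to a reduced word for the longest element $w_0$, so that successive applications of the operators $C_{j_k}=\pi_{j_k}^*(\pi_{j_k})_*$ push $Z_w$ up to the class of a point (whose image under $\tilde\varepsilon$ equals $1$), while $Z_v$ for $v<w$ is killed by push-pull and length comparisons built into the Bott--Samelson construction of \cite{CPZ13}. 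Given this, the hypothesis of the lemma, evaluated on the sequences $J(w)$ for $w\in W$, yields a triangular linear system in the coefficients $(a_w)_{w\in W}$ with invertible diagonal, forcing all $a_w=0$ and hence $x=0$.

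The main obstacle is proving the triangularity statement in the second step. When $A^*=\CH^*$ this reduces to standard Schubert calculus, but in the generalized oriented setting one must carefully track the $\kappa_i$-contributions in $\Delta_i=\kappa_i\cdot({-})-C_i$ to ensure they do not spoil the leading-term behaviour. A clean way to proceed is to reduce the verification to $A^*(\B T)$ via the characteristic map~\eqref{charmap}, after inverting the torsion index as in the proof of Lemma~\ref{lemDiff}; the push-pull identities for the Bott--Samelson classes in \cite[\S3, \S11]{CPZ13} then yield the required induction on $\ell(w)$.
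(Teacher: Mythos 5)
Your proposal shares the paper's basic template---expand $x$ in a basis of the free $A^*(\pt)$-module $A^*(G/B)$ and use the functionals $\tilde\varepsilon\circ\Delta_{I}$ to pick off coefficients---but it replaces the key input by an unproved and, as stated, degree-inconsistent assertion, so there is a genuine gap.

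The paper's proof is essentially one line once the right reference is quoted: \cite[Proposition~5.4 and Theorem~13.13]{CPZ13} provide a basis $\zeta_w$ of $\Omega^*(G/B)$ over $\laz$ (and hence, by the universality of $\Omega^*$ and base change to $A^*(\pt)$, a basis of $A^*(G/B)$) satisfying the \emph{exact} duality $\tilde\varepsilon\circ\Delta_{I_w}(\zeta_v)=\delta_{vw}$, with $I_w$ a fixed reduced word for $w$. Writing $x=\sum_v a_v\zeta_v$ and applying $\tilde\varepsilon\circ\Delta_{I_w}$ then gives $a_w=0$ immediately. You instead expand in the Bott--Samelson basis $\{Z_w\}$ and posit a triangularity with respect to auxiliary sequences $J(w)$. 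This is the step you yourself flag as the ``main obstacle,'' and the sketch you offer does not close it. Two concrete problems.

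First, for a general oriented theory the operators $\Delta_i$ do not satisfy the braid relations, the classes $Z_w$ depend on the chosen reduced word, and the Bott--Samelson basis is \emph{not} dual to the functionals $\tilde\varepsilon\circ\Delta_{I_w}$---this is exactly why \cite{CPZ13} construct the separate basis $\zeta_w$. Your triangularity claim amounts to the assertion that the change of basis from $\{Z_w\}$ to $\{\zeta_w\}$ is unitriangular; that is true but is itself part of the content of \cite{CPZ13}, so either you cite it (at which point you may as well use $\zeta_w$ directly, as the paper does) or you must reprove it, which your proposal does not do.

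Second, the specific choice of $J(w)$ as a completion of $I_w$ to a reduced word for $w_0$, so that $|J(w)|=\ell(w_0)-\ell(w)$, does not match the grading. Each $\Delta_i$ has degree $-1$ (it is $\kappa_i\cdot(-)-\pi_i^*(\pi_i)_*$, and the push--pull lowers codimension by one), and $Z_w\in A^{\ell(w)}(G/B)$, so $\tilde\varepsilon\circ\Delta_{J(w)}(Z_w)$ lands in degree $2\ell(w)-\ell(w_0)$ (in the augmentation normalisation). Over $\laz$ the only degree containing units is degree $0$, so your claimed ``invertible element'' can occur only when $\ell(w)=\ell(w_0)/2$. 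The sequences that pair correctly with $Z_w$ are precisely the reduced words of length $\ell(w)$, i.e.\ the $I_w$ themselves---the same ones the paper uses. In short, the heuristic of ``pushing $Z_w$ up to the class of a point'' runs in the wrong direction for these operators.

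So the overall strategy is recognisably the right one, but the proposal substitutes the crucial citation of the CPZ dual basis by an unsubstantiated triangularity statement built on a sequence choice that cannot work degree-wise; this is a gap, not a finished alternative argument.
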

\begin{proof}
For every $w\in W$ fix its reduced decomposition $I_w=(i_1,\ldots,i_{l(w)})$ such that ${w=s_{i_1}\ldots s_{i_{l(w)}}}$. Note that $l(w)\le l(w_0)=\dim G/B$, where $w_0$ is the longest element in the Weyl group. Set $\Delta_{I_w}=\Delta_{i_1}\circ\ldots\circ\Delta_{i_{l(w)}}$. By \cite[Proposition~5.4 and Theorem~13.13]{CPZ13} there is a basis $\zeta_w$ in $\Omega^*(G/B)$ over $\Omega^*(\pt)$ such that $\tilde\varepsilon\circ\Delta_{I_w}(\zeta_v)=\delta_{vw}$ for all $v,w\in W$. Since $\Omega^*$ is universal, a basis with the same properties exists for every $A^*$, and the claim follows.
\end{proof}
\end{ntt}

\section{General Hopf-theoretic statements}

For an arbitrary smooth algebraic group $G$ over a field $F$ we say that $E$ is a right $G$-torsor over a smooth $F$-variety $X$, if $E$ is a scheme over $X$ equipped with a right action of $G_X=G\times_F X$ which is locally trivial in \'etale topology.

Let $G$ be a split semisimple algebraic group over a field $F$, $T$ a split maximal torus of $G$ over $F$ and $B$ a Borel subgroup of $G$ over $F$ containing $T$.

\begin{lem}\label{T-tors}
Let $E$ be a right $T$-torsor over a smooth variety $X$. Then
$$A^*(E)\simeq A^*(X)\otimes_{A^*(\B T)}A^*(\pt).$$
\end{lem}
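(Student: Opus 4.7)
The plan is to reduce to the rank-one case by induction on $\dim T$, and there to use homotopy invariance together with the localization sequence.

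For the base case $T=\Gm$, a right $\Gm$-torsor over $X$ is the complement $L^\times$ of the zero section $s\colon X\hookrightarrow L$ in the line bundle $L$ on $X$ classified by the morphism $X\to\B\Gm$ underlying $E$. The localization axiom supplies the exact sequence
$$
A^*(X)\xrightarrow{s_*}A^*(L)\xrightarrow{j^*}A^*(L^\times)\to 0,
$$
and homotopy invariance makes $p^*\colon A^*(X)\to A^*(L)$ an isomorphism, where $p\colon L\to X$. By the projection formula, $s_*(\gamma)=s_*(1)\cdot p^*(\gamma)$, and by the self-intersection formula $s^*s_*(1)=c_1(L)$, which together force $s_*(1)=p^*c_1(L)$ under the isomorphism $p^*$. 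Hence $A^*(L^\times)\simeq A^*(X)/(c_1(L))$, and this coincides with $A^*(X)\otimes_{A^*(\B\Gm)}A^*(\pt)$ because the augmentation ideal of $A^*(\B\Gm)$ is generated by the first Chern class of the universal line bundle, whose pullback to $X$ is $c_1(L)$.

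For the inductive step I would write $T=T'\times\Gm$ and put $E':=E/T'$. Then $E'\to X$ is a $\Gm$-torsor, $E\to E'$ is a $T'$-torsor, and the base case together with the inductive hypothesis yield
$$
A^*(E)\simeq\bigl(A^*(X)\otimes_{A^*(\B\Gm)}A^*(\pt)\bigr)\otimes_{A^*(\B T')}A^*(\pt)\simeq A^*(X)\otimes_{A^*(\B T)}A^*(\pt),
$$
where the last isomorphism uses $\B T=\B T'\times\B\Gm$ together with the elementary fact that, for a module $M$ over an augmented tensor product $R\otimes_{A^*(\pt)} S$, successively killing the images of the augmentation ideals of $R$ and $S$ is the same as tensoring once with $A^*(\pt)$ over $R\otimes S$.

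The main obstacle is the careful interpretation of $A^*(\B T)$ in the Levine--Morel framework, where it is a completed/profinite object rather than a naive polynomial ring. What is actually needed, however, is only that its augmentation ideal is generated by the first Chern classes of the line bundles associated to a basis of characters of $T$, and that these generators match the ones produced by the inductive construction under the splitting $T=T'\times\Gm$; both are standard. As a cleaner alternative to induction, one could realize $E$ directly as the open subscheme of the vector bundle $L_{\chi_1}\oplus\cdots\oplus L_{\chi_n}$ (with $\chi_1,\ldots,\chi_n$ a basis of characters of $T$) where all coordinates are nonzero, and obtain the result in one pass by iterated localization and homotopy invariance.
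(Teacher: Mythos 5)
Your argument is correct and complete, whereas the paper itself does not prove the lemma at all: it simply cites Krishna's Theorem~3.4 from \cite{Kr12} and notes that the proof there applies to any oriented cohomology theory satisfying localization. So you have replaced a citation by a self-contained proof. The mechanism you use -- the localization sequence for the open complement of the zero section, homotopy invariance, the projection formula, and the self-intersection formula $s^*s_*(1)=c_1(L)$, together with induction on the rank of $T$ -- is exactly the expected one and is, in spirit, what underlies Krishna's argument, so the mathematical content is essentially the same even though the paper does not spell it out. Two points you flag yourself are indeed the only places that need care: (i) the identification $A^*(\B T)=A^*(\pt)[[t_1,\dots,t_n]]$ and the fact that the $T'$-torsor $E\to E'$ is pulled back from the $T'$-torsor $E/\Gm\to X$ (so that the $A^*(\B T')$-module structure on $A^*(E')\simeq A^*(X)\otimes_{A^*(\B\Gm)}A^*(\pt)$ is the one induced from $A^*(X)$, which is what makes the iterated tensor product collapse correctly); and (ii) that, although $A^*(\B T')\otimes_{A^*(\pt)}A^*(\B\Gm)$ is only dense in $A^*(\B T)$, the base change to $A^*(\pt)$ along either ring kills the same ideal $(c_1(L_{\chi_1}),\dots,c_1(L_{\chi_n}))\subset A^*(X)$, so the completion is harmless. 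Your closing one-pass variant (realizing $E$ as the locus of everywhere-nonzero sections inside $\bigoplus_i L_{\chi_i}$ and iterating localization) is a slightly cleaner packaging of the same induction and avoids the module-structure bookkeeping entirely; either formulation is fine.
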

\begin{proof}
Similar to the proof of \cite[Proposition~5.1]{GiZ12}. Namely, let $\chi_1,\ldots,\chi_n$ stand for a basis of the character group of $T$, and let $L(\chi_i)$ denote ${\mathbb A}^1$ with the action of $T$ via $\chi_i$. Then $T$ embeds $T$-equivariantly  into $\oplus_i L(\chi_i)\simeq {\mathbb A}^n$ with the complement equal to the union of the coordinate hyperplanes. Twisting by $E$ produces an open embedding of $E$ into a vector bundle $V={}_E{\mathbb A}^n_X$ over $X$ with the complement equal to the union of zero loci of $\chi_i$'s considered as regular maps on $V$. Applying homotopy invariance and the localization axiom we get an isomorphism
$$
A^*(E)\simeq A^*(X)/(c^A_1({}_EL(\chi_1)),\ldots,c^A_1({}_EL(\chi_n))),
$$
as claimed.
\end{proof}

\begin{cor}\label{G-tors}
Let $E$ be a right $G$-torsor over a smooth variety $X$. Then
$$A^*(E)\simeq A^*(E/B)\otimes_{A^*(\B T)}A^*(\pt).$$
\end{cor}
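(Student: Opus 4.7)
The plan is to factor the structural morphism as $E\to E/T\to E/B$ and combine Lemma~\ref{T-tors} with the homotopy invariance of $A^*$.

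First, since $T\subset B\subset G$ and $E$ is a right $G$-torsor over $X$, the quotient $E\to E/T$ is a right $T$-torsor, and $E/T\to E/B$ is a Zariski-locally trivial bundle with fiber $B/T$. As a variety, $B/T$ is isomorphic to the unipotent radical $U$ of $B$, which for a split group $G$ is an iterated extension of copies of $\mathbb{G}_a$; thus $E/T\to E/B$ is an iterated affine bundle. By the homotopy invariance axiom for an oriented cohomology theory in the sense of Levine--Morel, the pullback $A^*(E/B)\xrightarrow{\sim} A^*(E/T)$ is an isomorphism.

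Second, I would apply Lemma~\ref{T-tors} to the $T$-torsor $E\to E/T$, which gives
$$A^*(E)\simeq A^*(E/T)\otimes_{A^*(\B T)}A^*(\pt),$$
and then substitute the isomorphism from the previous step to obtain the desired description $A^*(E)\simeq A^*(E/B)\otimes_{A^*(\B T)}A^*(\pt)$.

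The main point to verify is that the two maps $A^*(\B T)\to A^*(E/T)$ appearing in the argument agree: the one used inside Lemma~\ref{T-tors} (classifying the $T$-torsor $E\to E/T$) and the characteristic map $A^*(\B T)\to A^*(E/B)$ followed by the pullback isomorphism $A^*(E/B)\simeq A^*(E/T)$. Both arise by pulling back along classifying maps of the same underlying $T$-bundle, so this is a naturality statement for characteristic classes; I expect it to be the only non-formal ingredient, most cleanly handled by reducing to universal classes on $\B T$, but otherwise routine.
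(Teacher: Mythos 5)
Your proof follows essentially the same route as the paper: factor $E\to E/T\to E/B$, apply Lemma~\ref{T-tors} to the $T$-torsor $E\to E/T$, and identify $A^*(E/T)\simeq A^*(E/B)$ via homotopy invariance for the affine bundle $E/T\to E/B$ (whose fiber $B/T\simeq U$ is indeed an iterated $\mathbb{G}_a$-extension, as you note). The compatibility of the two $A^*(\B T)$-module structures that you flag is left implicit in the paper's one-line proof; your observation is correct and the verification is routine (both structures come from the same $T$-torsor over $E/T$, since $E$ and $(E/U)\times_{E/B}(E/T)$ are canonically isomorphic as $T$-torsors).
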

\begin{proof}
Indeed, $E$ is a $T$-torsor over $E/T$, and the natural
map $E/T\to E/B$ is an affine bundle, hence gives an
isomorphism $A^*(E/B)\simeq A^*(E/T)$, and it remains to
apply Lemma~\ref{T-tors}.
\end{proof}

\begin{lem}\label{Kunn}
Let $X$ be a smooth variety. Then
$$A^*(X\times G)\simeq A^*(X)\otimes_{A^*(\pt)}A^*(G).$$
\end{lem}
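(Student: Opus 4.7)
The plan is to mimic the proof of Corollary~\ref{G-tors} in the relative setting over $X$ and then pull $A^*(X)$ out using the cellular Künneth formula for $G/B$.

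First, I would observe that $X\times G$ is naturally a right $T$-torsor over $X\times(G/T)$, since $G\to G/T$ is a $T$-torsor and this structure is preserved by taking a product with $X$. Applying Lemma~\ref{T-tors} to this torsor gives
\[
A^*(X\times G)\simeq A^*(X\times G/T)\otimes_{A^*(\B T)}A^*(\pt).
\]
Next, the projection $X\times(G/T)\to X\times(G/B)$ is an affine bundle (the fibers are $B/T$, which is affine space), so homotopy invariance yields $A^*(X\times G/T)\simeq A^*(X\times G/B)$. Thus
\[
A^*(X\times G)\simeq A^*(X\times G/B)\otimes_{A^*(\B T)}A^*(\pt).
\]

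Second, I would invoke the cellular Künneth formula recalled at the end of Section~2. Since $G/B$ is a smooth projective cellular variety, for arbitrary smooth $X$ we have
\[
A^*(X\times G/B)\simeq A^*(X)\otimes_{A^*(\pt)}A^*(G/B).
\]
Substituting this into the previous isomorphism and using associativity of tensor products gives
\[
A^*(X\times G)\simeq A^*(X)\otimes_{A^*(\pt)}\bigl(A^*(G/B)\otimes_{A^*(\B T)}A^*(\pt)\bigr),
\]
and by Corollary~\ref{G-tors} applied to the trivial torsor $G$ over $\Spec F$ the bracketed factor is exactly $A^*(G)$. This yields the claim.

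The main point to be careful about is functoriality: one needs the isomorphism in Lemma~\ref{T-tors} to be compatible with taking products, and the cellular Künneth isomorphism to be compatible with the $A^*(\B T)$-module structure coming from the characteristic map on the $G/B$-factor. Both are formal once the maps are traced through, but this is where the argument could become delicate if one wanted to make the isomorphism natural in $X$; no genuine new input beyond Lemma~\ref{T-tors}, homotopy invariance, and the cellular Künneth theorem is required.
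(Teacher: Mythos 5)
Your proof is correct and follows essentially the same route as the paper's: the paper applies Corollary~\ref{G-tors} directly to the trivial $G$-torsor $X\times G$ over $X$, whereas you re-derive that step by hand from Lemma~\ref{T-tors} plus the affine-bundle isomorphism, which is precisely what the proof of Corollary~\ref{G-tors} does internally. The remaining steps (cellular K\"unneth for $G/B$, then Corollary~\ref{G-tors} again over $\Spec F$) coincide with the paper's.
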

\begin{proof}
Since $X\times G$ is a trivial $G$-torsor over $X$, we have
\begin{align*}
A^*(X\times G)&\simeq A^*(X\times G/B)\otimes_{A^*(\B T)}A^*(\pt)\text{ by Corollary~\ref{G-tors}}\\
&\simeq A^*(X)\otimes_{A^*(\pt)}A^*(G/B)\otimes_{A^*(\B T)}A^*(\pt)\text{ since $G/B$ is cellular}\\
&\simeq A^*(X)\otimes_{A^*(\pt)}A^*(G)\text{ by Corollary~\ref{G-tors}.}
\end{align*}
\end{proof}

In general, for a variety $X$ (in this article we consider either geometrically cellular varieties $X$ or $X=E$) we denote by $\overline X$ the extension of scalars to a splitting field of $X$ and by
$\res\colon A^*(X)\to A^*(\overline X)$ the restriction homomorphism. For $\alpha\in A^*(X)$ we denote by $\bar\alpha$ the image of $\alpha$ under $\res$.

The $A^*(\pt)$-algebra $A^*(G)$ is a Hopf algebra, where the coproduct homomorphism $\Delta$ is the pullback of the multiplication in the group $G$ followed by the isomorphism of Lemma~\ref{Kunn}, i.e. $\Delta\colon A^*(G)\xrightarrow{\text{mult}} A^*(G\times G)\simeq A^*(G)\otimes_{A^*(\pt)}A^*(G)$.

The counit of $A^*(G)$ is the pullback of the embedding of the identity element of $G$ into $G$ and the antipode is the pullback of the inverse map for the group $G$.

Let $E$ be a right $G$-torsor over $F$.
Over a splitting field of $E$ the torsor $\overline E$ becomes isomorphic to $\overline G$. Since $G$ is a split group and the theory $A^*$ is generically constant, the Hopf algebra $A^*(G)$ does not depend
on the base field and the restriction homomorphism identifies $A^*(G)$ and $A^*(\overline G)$.
In particular, we have an isomorphism ${A^*(\overline E)\simeq A^*(G)}$.

Moreover, we claim that this isomorphism does not depend on the choice of an identification between $E$ and $G$ over a splitting field. Indeed, we can assume that the splitting field, say $K$,
is separably closed and consider the simply connected cover of $G$ over $K$. An isomorphism between $G$ and $E$ over $K$ corresponds to the choice of a $K$-rational point of $G$.
Since the simply connected cover of $G$ over $K$ is generated by its root elements $x_\alpha(\xi)$, we can consider the family $x_{\alpha}(\xi t)$ which gives a homotopy between the identity element (for $t=0$)
and $x_\alpha(\xi)$ (for $t=1$). Therefore, the homotopy invariance of the theory $A^*$ implies the claim.

Thus, we can consider the restriction homomorphism as ${\res\colon A^*(E)\to A^*(G)}$.

Then the diagram
\begin{equation}\label{tors-res}
\xymatrix{
A^*(E)\ar[r]\ar[d]_{\res} & A^*(E)\otimes_{A^*(\pt)}A^*(G)\ar^{\res\otimes\id}[d]\\
A^*(G)\ar^-{\Delta}[r] & A^*(G)\otimes_{A^*(\pt)}A^*(G)
}
\end{equation}
commutes.

\begin{lem}\label{bi-ideal}
The ideal $J$ in the algebra $A^*(G)$ generated by $$\imm(A^*(E)\xrightarrow{\res} A^*(G))\cap\Ker(A^*(G)\xrightarrow{\varepsilon} A^*(\pt)),$$
where $\varepsilon$ is the counit, is a two-sided bi-ideal in the bialgebra $A^*(G)$.
\end{lem}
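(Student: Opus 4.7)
The plan is to verify the three defining properties of a bi-ideal in the bialgebra $A^*(G)$: two-sidedness as an ideal, $\varepsilon(J) = 0$, and $\Delta(J) \subset J \otimes A^*(G) + A^*(G) \otimes J$. Since $A^*(G)$ is commutative, every ideal is automatically two-sided. Also, each generator of $J$ lies in $\Ker \varepsilon$ by construction, and since $\varepsilon$ is an algebra homomorphism, $\Ker \varepsilon$ is an ideal containing all the generators, hence $\varepsilon(J)=0$.

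The substantive step is the coideal condition. Because $\Delta$ is multiplicative and $J \otimes A^*(G) + A^*(G) \otimes J$ is stable under multiplication by $A^*(G) \otimes A^*(G)$, it suffices to check that $\Delta(x) \in J \otimes A^*(G) + A^*(G) \otimes J$ for each generator $x = \res(y) \in \imm(\res) \cap \Ker \varepsilon$. For such $x$, diagram~(\ref{tors-res}) identifies $\Delta(x) = (\res \otimes \id) \circ \rho(y)$, where $\rho\colon A^*(E) \to A^*(E) \otimes_{A^*(\pt)} A^*(G)$ is the top row of the diagram (the coaction coming from the right $G$-action on $E$). Writing $\rho(y) = \sum_i a_i \otimes g_i$ and applying the counit axiom $(\id \otimes \varepsilon)\rho = \id$ gives
\[
\rho(y) = y \otimes 1 + \sum_i a_i \otimes (g_i - \varepsilon(g_i)).
\]

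Applying $\res \otimes \id$, then splitting each $\res(a_i)$ as $\varepsilon(\res(a_i)) \cdot 1 + \bigl(\res(a_i) - \varepsilon(\res(a_i))\bigr)$ (where the second summand lies in $\imm(\res) \cap \Ker \varepsilon \subset J$), and finally moving scalars in $A^*(\pt)$ across the tensor product, one obtains
\[
\Delta(x) = x \otimes 1 + 1 \otimes \sum_i \varepsilon(\res(a_i))(g_i - \varepsilon(g_i)) + \sum_i \bigl(\res(a_i) - \varepsilon(\res(a_i))\bigr) \otimes (g_i - \varepsilon(g_i)).
\]
The other counit identity $(\varepsilon \otimes \id)\Delta(x) = x$, applied to the undecomposed expression and combined with $\varepsilon(x)=0$, forces $\sum_i \varepsilon(\res(a_i))(g_i - \varepsilon(g_i)) = x$. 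Thus the middle summand is $1 \otimes x \in A^*(G) \otimes J$, the outer summand $x \otimes 1$ is in $J \otimes A^*(G)$, and the last summand lies in $J \otimes A^*(G)$ as well, yielding the desired containment.

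The main obstacle is that $\rho$ lands in $A^*(E) \otimes A^*(G)$, so the right tensor factors $g_i$ need not come from $\imm(\res)$ and cannot be recognized a priori as elements of $J$. The remedy is to split the \emph{left} factors using the counit of $A^*(G)$: the pieces $\res(a_i) - \varepsilon(\res(a_i))$ then automatically lie in $J$, while the complementary scalar-weighted sum of the right factors is identified with $x$ itself via the second counit identity, and therefore sits in $J$ too.
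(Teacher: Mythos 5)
Your proof is correct and takes essentially the same approach as the paper: both use diagram~\eqref{tors-res} to express $\Delta$ on $\imm(\res)$ through the $A^*(G)$-comodule structure on $A^*(E)$ and then subtract off counit values to land the tensor legs in $J$. The paper asserts the resulting decomposition $\Delta(\bar e)=1\otimes\bar e+\sum\bar e_i\otimes a_i$ with $\bar e_i\in\imm(\res)\cap\Ker\varepsilon$ rather tersely; your version fills in the derivation by splitting the right factor via $(\id\otimes\varepsilon)\rho=\id$, then the left factor via the $A^*(\pt)$-module structure of $\imm(\res)$, and finally identifying the scalar-weighted remainder with $x$ via $(\varepsilon\otimes\id)\Delta=\id$ together with $\varepsilon(x)=0$ --- the same idea, just carried out in a slightly different order and with the intermediate steps made explicit.
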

\begin{proof}
By definition $J$ is contained in $\Ker\varepsilon$. Take any element $\bar e$ from $$\imm(A^*(E)\xrightarrow{\res} A^*(G))\cap\Ker(A^*(G)\xrightarrow{\varepsilon} A^*(\pt)).$$ By diagram~\eqref{tors-res} we can write $\Delta(\bar e)=1\otimes{\bar e}+\sum\bar e_i\otimes a_i$ with
$\bar e_i$ from $$\imm(A^*(E)\xrightarrow{\res} A^*(G))\cap\Ker(A^*(G)\xrightarrow{\varepsilon} A^*(\pt))$$ and $a_i$ from $A^*(G)$. But this sum belongs to $A^*(G)\otimes_{A^*(\pt)}J+J\otimes_{A^*(\pt)}A^*(G)$. Since $\Delta$ is a ring homomorphism, the lemma follows.
\end{proof}

\begin{dfn}\label{defj}
Define the bialgebra $H^*:=A^*(G)/J$. We call $H^*$ the {\it $J$-invariant} of $E$ with respect to the theory $A^*$.
\end{dfn}

\begin{example}
If $E$ is a standard generic torsor (see \cite[Section~3]{PS17}), then ${A^*(E)=A^*(\pt)}$ for every free theory $A^*$ (cf. \cite[Lemma~3.1]{PS17}). Therefore, in this case $H^*=A^*(G)$.
\end{example}

\begin{rem}\label{jinv-same}
Let us show that in the case of the Chow theory modulo a prime $p$ the bialgebra $H^*$ contains essentially the same information as the tuple of integers $(j_1,\ldots,j_r)$ from Definition~\ref{def71}.

Note that by Corollary~\ref{G-tors} we have a commutative diagram with surjective vertical arrows
$$
\xymatrix{
\Ch^*(E/B)\ar[r]^-\res\ar@{->>}[d]&\Ch^*(G/B)\ar@{->>}[d]\\
\Ch^*(E)\ar[r]^-\res&\Ch^*(G).
}
$$
This means that the image of the bottom map that appears in Definition~\ref{defj} coincides with the image of the map from the top left corner to the bottom right corner that appears in Definition~\ref{def71}.

Factorization modulo $J$ is a surjective algebra homomorphism
\begin{equation}\label{fff1}
\varphi\colon\Ch^*(G)\simeq\ff_p[e_1,\ldots,e_r]/(e_1^{p^{k_1}},\ldots,e_r^{p^{k_r}})\to H^*\simeq\ff_p[f_1,\ldots,f_s]/(f_1^{p^{l_1}},\ldots,f_s^{p^{l_s}})
\end{equation}
(see Subsection~\ref{strHopf}). We may assume $l_1\ge l_2\ge\ldots\ge l_s>0$. The map $\varphi$ induces a surjective linear map of vector spaces
$$
\varphi^+\colon\Ch^{>0}(G)/(\Ch^{>0}(G))^2\simeq\langle e_1,\ldots,e_r\rangle\to H^{>0}/(H^{>0})^2\simeq\langle f_1,\ldots f_s\rangle.
$$
Note that a homogeneous linear {\it upper triangular} substitutions of generators of $H^*$ does not change the relations: for example, if $\codim f_1=\codim f_2$, then we still have ${(f_1+f_2)^{p^{l_1}}=0}$ and $f_2^{p^{l_2}}=0$.
On the other hand, a homogeneous linear {\it lower triangular} substitution of generators of $\Ch^*(G)$ does not change the value of the $J$-invariant from Definition~\ref{def71}.

We claim that one can recover the presentation of $H^*$ as in formula~\eqref{fff1} from the $J$-invariant of $E$ (in the sense of Definition~\ref{def71}) and, conversely, one can recover the $J$-invariant of $E$ of Definition~\ref{def71} from the homomorphism~$\varphi$. Note that this is immediate, if the codimensions of the generators $e_i$ are pairwise distinct.

By \cite[Section~3]{Mal10} every matrix over a field has a LEU-decomposition, which can be seen as a generalized version of the Bruhat decomposition. Namely, every matrix can be written in the form $LEU$, where $L$ is a lower triangular matrix, $U$ is an upper triangular matrix and $E$ is a truncated permutation matrix.

Applying this decomposition to the matrix of $\varphi^+$
we can thus adjust the generators $e_i$, $f_j$ in such a way that $\varphi^+$ sends each $e_i$ to either $0$ or $f_{m_i}$ for some $m_i$. Since $\varphi^+$ is surjective, $l_{m_i}$'s are determined by $j_i$'s . Conversely, we can restore the value of $j_i$ as $0$ or $l_{m_i}$ respectively.
\end{rem}

\begin{dfn}
Let $X$ be a smooth projective geometrically cellular variety over $F$ equipped with a left action of $G$.
Define the structure of a left $H^*$-comodule on $A^*(\overline X)$ as the composition
$$
\rho\colon A^*(\overline X)\to A^*(\overline G\times\overline X)\to A^*(G)\otimes_{A^*(\pt)}A^*(\overline X)\to H^*\otimes_{A^*(\pt)}A^*(\overline X),
$$
where the first map is the pullback of the action of $\overline G$ on $\overline X$. We call $\rho$ the coaction map.

Note that $\rho$ preserves multiplication, as all intermediate maps do.
\end{dfn}

\begin{lem}\label{pushf}
The $H^*$-comodule structure is compatible with pullbacks and pushforwards along equivariant projective morphisms.
\end{lem}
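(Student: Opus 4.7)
The plan is to reduce both statements to the base change identity along the $\overline G$-equivariance square
$$
\xymatrix{
\overline G\times \overline X\ar[r]^-{a_X}\ar[d]_{\id\times f} & \overline X\ar[d]^{f}\\
\overline G\times \overline Y\ar[r]^-{a_Y} & \overline Y
}
$$
arising from the assumption that $f$ is $G$-equivariant, where $a_X$ and $a_Y$ denote the action maps. The first step is to verify that this square is Cartesian, which one checks on points using the invertibility of the action, and that $a_X$ is smooth: writing $a_X=\pi_2\circ\phi$ with $\phi(g,x)=(g,gx)$ an automorphism of $\overline G\times\overline X$ and $\pi_2$ the projection onto $\overline X$, we see that $a_X$ is the composition of an isomorphism with a smooth projection. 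The vertical map $\id\times f$ is projective since $f$ is.

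Once the geometric setup is in place, the pullback case is immediate: the K\"unneth isomorphism of Lemma~\ref{Kunn} and the quotient $A^*(G)\to H^*$ are natural in their variables, so the compatibility $\rho_Y\circ f^*=(\id\otimes f^*)\circ\rho_X$ reduces to the functoriality identity $a_X^*\circ f^*=(\id\times f)^*\circ a_Y^*$ along the above square. For the pushforward I would invoke the base change identity $a_Y^*\circ f_*=(\id\times f)_*\circ a_X^*$, which holds for Levine--Morel theories along a Cartesian square whose one leg is smooth and whose other leg is projective. Under Lemma~\ref{Kunn} the map $(\id\times f)_*$ is identified with $\id\otimes f_*$ on $A^*(G)\otimes_{A^*(\pt)}A^*(\overline X)$, and further projection to $H^*\otimes_{A^*(\pt)}A^*(\overline Y)$ then yields the desired commutative diagram.

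The step I expect to require the most care is not the algebra but the geometric verification that puts the base change machinery on firm ground: the relevant square is not a literal product of morphisms but a torsor-like square, so its Cartesian property must be checked carefully and the smoothness of the action map must be extracted via the trivialisation $\phi$, before one may invoke the axioms of \cite{LM07}. After these structural checks, the rest is naturality of K\"unneth and functoriality of push/pull.
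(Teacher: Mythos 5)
Your proof matches the paper's: both reduce the pushforward compatibility to base change along the Cartesian square formed by the action maps and $\id\times f$, and both treat the pullback case as immediate from functoriality. The paper merely asserts that this square is Cartesian with flat horizontal legs, whereas you supply the verification via the shear automorphism $\phi(g,x)=(g,gx)$; this is the same argument, fleshed out slightly more.
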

\begin{proof}
For pullbacks the claim is obvious, for $\rho$ is defined in terms of pullbacks. For a projective morphism $f\colon X\to Y$ we have a Cartesian square
$$
\xymatrix{
{\overline G}\times{\overline X}\ar[r]\ar_-{\id\times{\bar f}}[d] & {\overline X}\ar^-{\bar f}[d]\\
{\overline G}\times{\overline Y}\ar[r]\ar[r] &{\overline Y},
}
$$
whose horizontal maps are flat. Therefore, this square is transversal (see \cite[Definition~1.1.1]{LM07}) and, thus, induces the following commutative diagram in cohomology:
$$
\xymatrix{
A^*({\overline G}\times{\overline X})\ar_-{(\id\times{\bar f})_*}[d] & A^*({\overline X})\ar^-\rho[l]\ar^-{\bar f_*}[d]\\
A^*({\overline G}\times{\overline Y})&A^*({\overline Y})\ar^-\rho[l],
}
$$
and the claim follows.
\end{proof}

Let $X$ be a smooth projective $G$-variety over $F$. We denote by ${}_E X$ the twisted form of $X$ by means of the torsor $E$. Note that ${}_E X$ and $X$ are isomorphic over every splitting field of $E$.

\begin{lem}\label{comod-triv}
Let $X$ be smooth projective cellular $G$-variety over $F$ and 
let $\alpha$ be an element from $A^*({}_E X)$.
Then $\bar\alpha$ is a coinvariant element, i.e. $\rho(\bar\alpha)=1\otimes\bar\alpha$ in $H^*\otimes_{A^*(\pt)}A^*(\overline X)$.
\end{lem}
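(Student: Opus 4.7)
The plan is to exploit the $G$-torsor $q \colon E \times X \to {}_E X$ (with the diagonal action $(e,x) \cdot g = (eg, g^{-1} x)$). Over a splitting field $L$, any trivialization $\overline E \cong \overline G$ induces compatible identifications $\overline{E \times X} \cong \overline G \times \overline X$ and $\overline{{}_E X} \cong \overline X$ (the latter via $[g,x] \mapsto gx$), under which $\overline q$ becomes the action map $a \colon \overline G \times \overline X \to \overline X$. Since $X$ is cellular, K\"unneth gives the isomorphisms $A^*(E \times X) \cong A^*(E) \otimes_{A^*(\pt)} A^*(X)$ and $A^*(\overline G \times \overline X) \cong A^*(G) \otimes_{A^*(\pt)} A^*(\overline X)$, and by functoriality $\res(q^* \alpha) = a^*(\bar\alpha)$, which is exactly the first stage of $\rho(\bar\alpha)$.

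Write $q^* \alpha = \sum_i e_i \otimes x_i$ with $e_i \in A^*(E)$ and $x_i \in A^*(X)$; then
\[
a^*(\bar\alpha) = \sum_i \res(e_i) \otimes \res(x_i) \in A^*(G) \otimes_{A^*(\pt)} A^*(\overline X).
\]
For each $i$ set $c_i := \varepsilon(\res(e_i)) \in A^*(\pt)$. Since $\res(1) = 1$, the scalar $c_i \cdot 1$ lies in $\imm(\res)$, so $\res(e_i) - c_i \cdot 1 \in \imm(\res) \cap \Ker(\varepsilon) \subseteq J$ by Definition~\ref{defj}. Therefore, modulo $J \otimes_{A^*(\pt)} A^*(\overline X)$,
\[
a^*(\bar\alpha) \;\equiv\; 1 \otimes \sum_i c_i \, \res(x_i).
\]

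To identify the residue as $\bar\alpha$, I apply $\varepsilon \otimes \id$ to $a^*(\bar\alpha) = \sum_i \res(e_i) \otimes \res(x_i)$: the right-hand side becomes $\sum_i c_i \, \res(x_i)$, while the left-hand side equals $\bar\alpha$, because the composition $\overline X \xrightarrow{u \times \id} \overline G \times \overline X \xrightarrow{a} \overline X$ is the identity (with $u$ the identity of $\overline G$). Hence $\rho(\bar\alpha) = 1 \otimes \bar\alpha$ in $H^* \otimes_{A^*(\pt)} A^*(\overline X)$. The main bookkeeping point is the identification of $\overline q$ with $a$ over the splitting field, together with its compatibility with the two K\"unneth decompositions; once this is in place, the argument becomes a formal Hopf-algebraic manipulation driven by the definition of $J$.
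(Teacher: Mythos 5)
Your argument is essentially the paper's proof: both hinge on the commutative square relating $q^*\colon A^*({}_E X)\to A^*(E)\otimes_{A^*(\pt)}A^*(\overline X)$ over $F$ to the action-map pullback $a^*\colon A^*(\overline X)\to A^*(G)\otimes_{A^*(\pt)}A^*(\overline X)$ over a splitting field, and then use $\imm(\res)\cap\Ker\varepsilon\subseteq J$ to kill the non-constant part of the $A^*(G)$-tensor factor. You fill in one step the paper leaves implicit, namely applying $\varepsilon\otimes\id$ and using $a\circ(u\times\id)=\id_{\overline X}$ to identify the coefficient of $1$ as $\bar\alpha$, but this is a detail rather than a different route.
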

\begin{proof}
Consider the natural projection map $E\times X\to{}_E X$. Then the following diagram commutes:
$$
\xymatrix{
A^*({}_E X)\ar[r]\ar_{\res}[d] & A^*(E)\otimes_{A^*(\pt)}A^*(\overline X)
\ar^{\res\otimes\id}[d]\\
A^*(\overline X)\ar[r] & A^*(G)\otimes_{A^*(\pt)}A^*(\overline X).
}
$$
Now we can write $\rho(\bar\alpha)=1\otimes{\bar\alpha}+\sum{\bar e_i\otimes a_i}$ with $\bar e_i$ from $$\imm(A^*(E)\xrightarrow{\res} A^*(G))\cap\Ker(A^*(G)\xrightarrow{\varepsilon} A^*(\pt))$$ and $a_i$ from $A^*(\overline X)$, and the claim follows.
\end{proof}

\begin{dfn}\label{realiz}
Let $X$ and $Y$ be smooth projective varieties over $F$.
Any element $\alpha$ from $A^*(X\times Y)$ defines the \emph{realization map}
$$
\alpha_\star=(\pr_{Y})_*\circ\mu_\alpha\circ\pr_X^*\colon A^*(X)\to A^*(Y),
$$
where $\mu_\alpha$ stands for the multiplication by $\alpha$.
\end{dfn}

\begin{thm}\label{comod-preserving}
Let $X$, $Y$ be smooth projective cellular $G$-varieties over $F$. Let ${}_E X$, ${}_E Y$ be the respective twisted forms of $X$ and $Y$. Let $\alpha$ be a correspondence from $A^*({}_E X\times {}_E Y)$. Then $\bar\alpha_\star\colon A^*(\overline X)\to A^*(\overline Y)$ is a homomorphism of $H^*$-comodules.
\end{thm}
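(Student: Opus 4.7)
The plan is to decompose the realization map into three basic operations and show that each preserves the $H^*$-comodule structure. Equip the product $X\times Y$ with the diagonal left $G$-action. Then ${}_E(X\times Y)\simeq {}_E X\times_F{}_E Y$ (so that $\alpha$ lives in $A^*({}_E(X\times Y))$), and both projections $\pr_X\colon X\times Y\to X$ and $\pr_Y\colon X\times Y\to Y$ are $G$-equivariant projective morphisms. In particular, by Definition~\ref{realiz} we have the factorization $\bar\alpha_\star=(\pr_Y)_*\circ\mu_{\bar\alpha}\circ\pr_X^*$, with all three maps acting on cohomology of $G$-varieties that carry natural $H^*$-comodule structures $\rho_X$, $\rho_{X\times Y}$, $\rho_Y$.

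First, by Lemma~\ref{pushf} both $\pr_X^*$ and $(\pr_Y)_*$ are morphisms of $H^*$-comodules, i.e.
\[
\rho_{X\times Y}\circ\pr_X^*=(\id\otimes\pr_X^*)\circ\rho_X,\qquad \rho_Y\circ(\pr_Y)_*=(\id\otimes(\pr_Y)_*)\circ\rho_{X\times Y}.
\]
Next, since $\alpha$ is defined on the $F$-variety ${}_E X\times{}_E Y={}_E(X\times Y)$, Lemma~\ref{comod-triv} shows that $\bar\alpha$ is coinvariant, $\rho_{X\times Y}(\bar\alpha)=1\otimes\bar\alpha$. Because $\rho_{X\times Y}$ is a ring homomorphism, for every $\gamma\in A^*(\overline{X\times Y})$ with $\rho_{X\times Y}(\gamma)=\sum h_i\otimes\gamma_i$ we obtain
\[
\rho_{X\times Y}(\bar\alpha\gamma)=\rho_{X\times Y}(\bar\alpha)\cdot\rho_{X\times Y}(\gamma)=\sum h_i\otimes\bar\alpha\gamma_i=(\id\otimes\mu_{\bar\alpha})\circ\rho_{X\times Y}(\gamma),
\]
so multiplication by $\bar\alpha$ is also a comodule morphism. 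Composing the three identities in order then gives $\rho_Y\circ\bar\alpha_\star=(\id\otimes\bar\alpha_\star)\circ\rho_X$, which is exactly the assertion.

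The actual computations are essentially formal once the setup is in place; the main subtlety is conceptual rather than technical. One must make the right choice of $G$-action on $X\times Y$ (the diagonal one) so that the twisted form equals ${}_E X\times_F{}_E Y$ and the projections become equivariant and projective, thereby making Lemma~\ref{pushf} applicable. Once this is ensured, the theorem follows by assembling Lemmas~\ref{pushf} and~\ref{comod-triv} together with the fact that $\rho$ is multiplicative.
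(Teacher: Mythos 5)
Your proof is correct and takes essentially the same route as the paper: decompose $\bar\alpha_\star$ as $(\pr_Y)_*\circ\mu_{\bar\alpha}\circ\pr_X^*$, apply Lemma~\ref{pushf} to the pullback and pushforward, and use Lemma~\ref{comod-triv} together with the multiplicativity of $\rho$ for the middle step. Your explicit observation that one should equip $X\times Y$ with the diagonal $G$-action so that ${}_E(X\times Y)\simeq{}_E X\times_F{}_E Y$ and the projections become equivariant is a minor but helpful clarification that the paper leaves implicit.
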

\begin{proof}
We have to show that the diagram
$$
\xymatrix{
A^*(\overline X)\ar^-{\rho}[r]\ar_{\bar\alpha_\star}[d] & H^*\otimes_{A^*(\pt)}A^*(\overline X)
\ar^{\id\otimes\bar\alpha_\star}[d]\\
A^*(\overline Y)\ar^-{\rho}[r] & H^*\otimes_{A^*(\pt)}A^*(\overline Y)
}
$$
commutes. We have
\begin{align*}
\rho\circ\bar\alpha_\star&=\rho\circ(\pr_{\bar Y})_*\circ\mu_{\bar\alpha}\circ\pr_{\bar X}^*\text{ by Definition~\ref{realiz}}\\
&=(\id\otimes(\pr_{\bar Y})_*)\circ\mu_{\rho(\bar\alpha)}\circ(\id\otimes\pr_{\bar X}^*)\circ\rho\text{ by Lemma~\ref{pushf}}\\
&=(\id\otimes(\pr_{\bar Y})_*)\circ(\id\otimes\mu_{\bar\alpha})\circ(\id\otimes\pr_{\bar X}^*)\circ\rho\text{ by Lemma~\ref{comod-triv}}\\
&=(\id\otimes\bar\alpha_\star)\circ\rho.
\end{align*}
\end{proof}

\begin{rem}
Theorem~\ref{comod-preserving} means that there is a ``realization'' functor from the category of motives of $E$-twisted forms of cellular $G$-varieties to the category of graded $H^*$-comodules. It is easy to see that the functor preserves the tensor product structure, where as usual the tensor product of comodules is considered as a comodule via the multiplication map:
$$
\rho_{M\otimes N}=\mult\circ\big(\rho_M\otimes\rho_N\big).
$$
\end{rem}

\section{Generically split twisted flag varieties}\label{secgensplit}

\begin{assumption}\label{assump}
Starting from this section we assume that every finitely generated graded $A^*(\pt)$-module is projective.
\end{assumption}

For example, this assumption holds for Chow groups modulo $p$ and for the Morava $K$-theory modulo $p$.

Let $E$ be a right $G$-torsor over $F$.
Define the subring of ``rational cycles''
$$
R^*=\imm\big(A^*(E/B)\xrightarrow{\res} A^*(G/B)\big).
$$

Furthermore, define the ideal $$I=R^*\cap\Ker(A^*(G/B)\xrightarrow{\widetilde\varepsilon} A^*(\pt))$$ in $R^*$, where $\widetilde\varepsilon$ is the pullback map to a rational point. Note that the diagram
$$
\xymatrix{
A^*(G/B)\ar[rd]^-{\widetilde\varepsilon}\ar[r]^-\pi&A^*(G)\ar[d]^-\varepsilon\\
&A^*(\pt),
}
$$
where $\varepsilon$ is the counit map and $\pi$ is the pullback of the canonical projection $G\to G/B$, is commutative.

\begin{lem}\label{nilpideal}
The ideal $I$ is nilpotent.
\end{lem}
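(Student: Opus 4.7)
The plan is to introduce the length filtration on $A^*(G/B)$ coming from the Bott--Samelson basis and locate $I$ in its positive part. Since $G/B$ is cellular, $A^*(G/B)$ is a free $A^*(\pt)$-module with basis the Bott--Samelson classes $\{Z_w\}_{w\in W}$ from Section~\ref{sec31}, where $Z_w$ has codimension $l(w)$. Set
$$F^k \Dfn \bigoplus_{l(w)\ge k} A^*(\pt)\cdot Z_w \ \subset\  A^*(G/B).$$
This filtration is finite: since $l(w)\le l(w_0) = N \Dfn \dim G/B$ for every $w \in W$, we have $F^{N+1}=0$. I aim to show (a) $F^\bullet$ is multiplicative, i.e.\ $F^i\cdot F^j\subset F^{i+j}$, and (b) $I\subset F^1$. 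Together these yield $I^{N+1}\subset (F^1)^{N+1}\subset F^{N+1}=0$.

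For (a), I would first verify the multiplicativity in the universal theory $\Omega^*$. Writing $Z_w\cdot Z_v = \sum_u c^u_{w,v}Z_u$ in $\Omega^*(G/B)$ with $c^u_{w,v}\in\laz$, homogeneity forces $\deg c^u_{w,v} = l(w)+l(v)-l(u)$. Since $\laz\simeq\zz[t_1,t_2,\ldots]$ with $\deg t_i = -i$ is concentrated in non-positive degrees, $c^u_{w,v}$ vanishes whenever $l(u)<l(w)+l(v)$. Under the canonical morphism $\Omega^*\to A^*$, the $Z_w$ map to the Bott--Samelson classes in $A^*(G/B)$ and their structure constants are images of the $c^u_{w,v}$, so $F^\bullet$ is multiplicative in $A^*(G/B)$ as well. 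The same degree computation applied to the augmentation shows that $\widetilde\varepsilon(Z_w)\in A^{l(w)}(\pt)$ is the image of an element of $\laz^{l(w)}=0$ for $l(w)>0$; hence writing $\alpha = \sum_w c_w Z_w$ we obtain $\widetilde\varepsilon(\alpha) = c_1$, so $\Ker\widetilde\varepsilon = F^1$, and $I\subset \Ker\widetilde\varepsilon = F^1$, giving (b).

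The main obstacle is really just setting up the bookkeeping behind (a): one needs to record that the relation $Z_wZ_v = \sum_u c^u_{w,v}Z_u$ is a homogeneous identity in the graded ring $\Omega^*(G/B)$, and then use that the Lazard-ring grading lies in non-positive degrees to force the claimed vanishing of $c^u_{w,v}$ for $l(u)<l(w)+l(v)$. Once this is observed, the remainder of the proof is immediate. Notice that this argument does not actually invoke Assumption~\ref{assump}: the nilpotency of the augmentation ideal of $A^*(G/B)$ is a geometric consequence of the cellularity of $G/B$ together with the grading of $\laz$, and it passes to any $A^*$ through the universality of $\Omega^*$.
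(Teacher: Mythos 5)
Your proof is correct and takes essentially the same approach as the paper's: expand an element of $\Ker\widetilde\varepsilon$ in the Bott--Samelson basis, observe that the coefficient of $1$ vanishes, and conclude from the nilpotency of the ideal generated by the positive-length basis elements. The paper simply asserts that these $Z_w$ are nilpotent, whereas you supply the justification via the multiplicativity of the length filtration (traced back through $\Omega^*$ to the non-positive grading of $\laz$), which is indeed the right way to see it, since the cruder statement $A^{>\dim G/B}(G/B)=0$ can fail for theories such as Morava $K$-theory whose coefficient ring has invertible elements of positive degree.
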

\begin{proof}
Consider the expansion of an element $a$ from $\Ker(A^*(G/B)\xrightarrow{\widetilde\varepsilon} A^*(\pt))$ in the standard basis $Z_w$ (see Section~\ref{sec31}). Note that $$\widetilde\varepsilon(Z_w)=\begin{cases}
1, & w=1;\\
0, & w\ne 1.
\end{cases}$$
Therefore, $\tilde\varepsilon(a)=0$
is the coefficient of the element $Z_1\in A^*(G/B)$.
All other elements $Z_w$, $w\ne 1$, from the
standard basis are of positive codimension and come from the algebraic cobordism $\Omega^*(G/B)$ and, hence, are nilpotent. Therefore, the ideal $I$ is nilpotent.
\end{proof}

\begin{lem}\label{modI}
$A^*(G/B)/IA^*(G/B)\simeq H^*$ as $A^*(\pt)$-algebras.
\end{lem}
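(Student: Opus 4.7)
The plan is to use the surjection $\pi\colon A^*(G/B)\twoheadrightarrow A^*(G)$ coming from the right exact sequence of Section~\ref{sec31} as a bridge between the two algebras, and to show that under $\pi$ the ideal $I\cdot A^*(G/B)$ is sent precisely onto $J$ while $\Ker\pi$ is already absorbed inside $I\cdot A^*(G/B)$. First I would identify $\pi(R^*)$ and $\pi(I)$. The analogous right exact sequence for the torsor $E$ says that $\pi_E\colon A^*(E/B)\twoheadrightarrow A^*(E)$ is also surjective, so the naturality square for $\res$ yields $\pi(R^*)=\imm(A^*(E)\xrightarrow{\res}A^*(G))$. Moreover, the augmentation $\widetilde\varepsilon$ (the coefficient of $Z_e=1$ in the standard basis, i.e.\ pullback along the basepoint $eB\in G/B$) factors as $\varepsilon\circ\pi$, so one concludes $\pi(I) = \imm(\res)\cap\Ker\varepsilon$, which is exactly the generating set of $J$. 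Hence $J = \pi(I\cdot A^*(G/B))$.

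It follows that the composition $A^*(G/B)\xrightarrow{\pi}A^*(G)\twoheadrightarrow H^*$ is a surjection with kernel $\pi^{-1}(J) = I\cdot A^*(G/B) + \Ker\pi$, so to obtain the claimed isomorphism it suffices to prove the inclusion $\Ker\pi\subseteq I\cdot A^*(G/B)$. By Section~\ref{sec31}, $\Ker\pi$ is the ideal of $A^*(G/B)$ generated by the positive codimension part of the image of the characteristic map $c\colon A^*(\B T)\to A^*(G/B)$. The key input here is a twisted analogue: the tautological line bundles on $G/B$ admit $E$-twisted forms on $E/B$, yielding a characteristic map $c_E\colon A^*(\B T)\to A^*(E/B)$ satisfying $\res\circ c_E = c$ after extension of scalars to a splitting field of $E$. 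Consequently $\imm(c)\subseteq R^*$, and because $A^{>0}(\pt)=0$ for the theories covered by Assumption~\ref{assump}, every class in $c(A^{>0}(\B T))$ has trivial coefficient at $Z_e$ and therefore lies in $I$; this gives the required containment $\Ker\pi\subseteq I\cdot A^*(G/B)$.

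The main obstacle I anticipate is spelling out the twisted characteristic map $c_E$ and the compatibility $\res\circ c_E = c$, which amounts to the descent assertion that the tautological $T$-line bundles on $E/B$ base change to the standard ones on $G/B$ over a splitting field. Once this equivariance is available, the rest is formal bookkeeping with the right exact sequence for $\pi$ and the definitions of $I$ and $J$.
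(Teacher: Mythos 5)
Your overall strategy is a sound alternative to the paper's: the paper instead stays entirely at the level of tensor products, writing $A^*(G)\simeq A^*(G/B)\otimes_{A^*(\B T)}A^*(\pt)\simeq A^*(G/B)\otimes_{A^*(E/B)}\bigl(A^*(E/B)\otimes_{A^*(\B T)}A^*(\pt)\bigr)\simeq A^*(G/B)\otimes_{A^*(E/B)}A^*(E)$ via Corollary~\ref{G-tors} (applied once to $G$ and once to $E$) together with the same factorization $c=\res\circ c_E$ that you use, and then tensors both sides with $A^*(\pt)$ over $A^*(E)$ to land directly on the two quotients $H^*$ and $A^*(G/B)/IA^*(G/B)$. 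Your proposal unpacks essentially the same data as a kernel--image chase through the right exact sequence. What you call the twisted characteristic map $c_E$ is exactly the map $A^*(\B T)\to A^*_T(E)\simeq A^*(E/B)$ already appearing in Section~\ref{secdem}, so that ingredient is fine. Your argument buys a more explicit picture of how $\Ker\pi$ gets absorbed; the paper's buys brevity.

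There is, however, one genuine flaw in your execution of the step $\Ker\pi\subseteq IA^*(G/B)$. You assert that $A^{>0}(\pt)=0$ for the theories covered by Assumption~\ref{assump} and use this to conclude that any positive-degree class has vanishing coefficient at $Z_e$. This is false for the Morava $K$-theories, which the paper explicitly places under Assumption~\ref{assump}: there $A^*(\pt)=\ff_p[v_n,v_n^{-1}]$ and $v_n^{-1}$ has positive degree $p^n-1$, so $A^{>0}(\pt)\ne 0$ and positive-degree elements of $A^*(G/B)$ can have nonzero $Z_e$-component. The correct argument here is degree-independent: $\tilde\varepsilon\circ c\colon A^*(\B T)\to A^*(\pt)$ is the augmentation of $A^*(\B T)$ (pullback along the basepoint, under which the tautological line bundles trivialize), so it kills the augmentation ideal $I_{\B T}=\Ker\bigl(A^*(\B T)\to A^*(\pt)\bigr)$. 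Since $\Ker\pi$ is the ideal generated by $c(I_{\B T})$ (for a general free theory this is the precise form of the right-exactness statement; the phrase ``positive degrees in the image of $c$'' in Section~\ref{sec31} is for $\Omega^*$, where $\laz^{>0}=0$) and $c(I_{\B T})\subseteq R^*\cap\Ker\tilde\varepsilon=I$, the containment $\Ker\pi\subseteq IA^*(G/B)$ follows. With that repair the rest of your bookkeeping, including the identity $\pi(I)=\imm(\res)\cap\Ker\varepsilon$ via $\varepsilon\circ\pi=\tilde\varepsilon$, goes through correctly.
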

\begin{proof}
We have
\begin{align*}
A^*(G)&\simeq A^*(G/B)\otimes_{A^*(\B T)}A^*(\pt)\text{ by Corollary~\ref{G-tors}}\\
&\simeq A^*(G/B)\otimes_{A^*(E/B)}\big(A^*(E/B)\otimes_{A^*(\B T)}A^*(\pt)\big)\\
&\simeq A^*(G/B)\otimes_{A^*(E/B)}A^*(E)\text{ by Corollary~\ref{G-tors}}.
\end{align*}
It follows that
$$
A^*(G)\otimes_{A^*(E)}A^*(\pt)\simeq A^*(G/B)\otimes_{A^*(E/B)}A^*(\pt),
$$
where the right-hand side is $A^*(G/B)/IA^*(G/B)$, and the left-hand side is $H^*$.
\end{proof}

By Assumption~\ref{assump} there exists a section
$$\sigma\colon H^*\to A^*(G/B)$$ of the canonical homomorphism $A^*(G/B)\to H^*$ of $A^*(\pt)$-modules.

\begin{lem}\label{mult-iso}
The map
\begin{align*}
\theta\colon H^*\otimes_{A^*(\pt)}R^*&\to A^*(G/B)\\ x\otimes y&\mapsto \sigma(x)y
\end{align*}
is an isomorphism of $R^*$-modules.
\end{lem}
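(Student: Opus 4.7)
The plan is to split the proof into surjectivity, which is a relatively straightforward Nakayama argument using the nilpotence of $I$, and injectivity, which I would deduce from a rank comparison that in turn reduces to analyzing the $I$-adic filtration on both sides.

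\emph{Surjectivity.} First I observe that $\theta$ is a map of $R^*$-modules, where $R^*$ acts on the second factor of the source by multiplication and on $A^*(G/B)$ by inclusion followed by multiplication. Since $\sigma$ is a section of the quotient $A^*(G/B)\twoheadrightarrow A^*(G/B)/IA^*(G/B)\simeq H^*$ from Lemma~\ref{modI}, any $a\in A^*(G/B)$ satisfies $a=\sigma(h)+b$ with $b\in IA^*(G/B)$. Since $I\subset R^*$, this shows $A^*(G/B)=\sigma(H^*)R^*+IA^*(G/B)$. Using $I\cdot\sigma(H^*)R^*\subseteq\sigma(H^*)R^*$ and iterating gives $A^*(G/B)=\sigma(H^*)R^*+I^nA^*(G/B)$ for all $n\ge 1$. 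By nilpotence of $I$ (Lemma~\ref{nilpideal}), the last term vanishes for $n$ large enough, so $A^*(G/B)=\sigma(H^*)R^*=\operatorname{im}\theta$.

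\emph{Injectivity.} By Assumption~\ref{assump}, $H^*$, $R^*$, and $A^*(G/B)$ are finitely generated projective graded $A^*(\pt)$-modules (indeed $A^*(G/B)$ is free of rank $|W|$ by cellularity). Since $A^*(G/B)$ is projective, the surjection $\theta$ admits a splitting over $A^*(\pt)$, giving a decomposition $H^*\otimes_{A^*(\pt)}R^*\simeq\Ker\theta\oplus A^*(G/B)$ as graded $A^*(\pt)$-modules. Consequently $\Ker\theta=0$ if and only if the Poincar\'e polynomial identity $P_{H^*}(t)\cdot P_{R^*}(t)=P_{A^*(G/B)}(t)$ holds.

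To establish the Poincar\'e polynomial identity, I would pass to the $I$-adic filtrations. By Assumption~\ref{assump} each graded piece $I^k/I^{k+1}$ and $I^kA^*(G/B)/I^{k+1}A^*(G/B)$ is projective, so the short exact sequences split and one obtains decompositions $R^*\simeq\bigoplus_k I^k/I^{k+1}$ and $A^*(G/B)\simeq\bigoplus_k I^kA^*(G/B)/I^{k+1}A^*(G/B)$. The map $\theta$ preserves the filtrations (since $H^*\otimes I^k$ maps into $\sigma(H^*)I^k=I^kA^*(G/B)$ by the surjectivity argument above), and induces surjective graded maps
\begin{equation*}
\gr^k\theta\colon H^*\otimes_{A^*(\pt)}I^k/I^{k+1}\twoheadrightarrow I^kA^*(G/B)/I^{k+1}A^*(G/B).
\end{equation*}
At $k=0$ this is the identity $H^*\to H^*$ by construction of $\sigma$, and it suffices to show that each $\gr^k\theta$ is an isomorphism.

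The main obstacle will be establishing injectivity of $\gr^k\theta$ for $k\ge 1$. I would exploit the fact that $\gr A^*(G/B)$ is a graded $\gr R^*$-module generated in degree $0$ by $H^*=\gr^0 A^*(G/B)$, while $H^*\otimes\gr R^*$ is the ``free'' such module; the surjection $\gr\theta$ is $\gr R^*$-linear. Using the projectivity of $H^*$ to get exactness of $H^*\otimes(-)$ and the fact that $\gr^0\theta$ is the identity, one should be able to propagate injectivity from $k=0$ through the finite filtration, analogous to a graded Nakayama argument but on the associated graded ring. Once all $\gr^k\theta$ are iso, additivity of Poincar\'e polynomials across the filtration yields the desired identity and hence injectivity of $\theta$.
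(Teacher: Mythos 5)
Your surjectivity argument is correct and is essentially the one the paper gives: both amount to observing that the cokernel $M$ of $\theta$ satisfies $MI=M$ and then invoking the nilpotence of $I$ from Lemma~\ref{nilpideal}.

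The injectivity part has a genuine gap, precisely at the step you label ``the main obstacle.'' Knowing that the degree-$0$ piece of the associated graded map is an isomorphism does not propagate, by any graded Nakayama mechanism, to injectivity on the higher graded pieces. A surjection $H^*\otimes_{A^*(\pt)}\mathrm{gr}\,R^*\twoheadrightarrow\mathrm{gr}\,A^*(G/B)$ of graded modules can be an isomorphism in filtration degree $0$ and still have a nonzero kernel above: for instance, with $R^*=\ff_p[x]/(x^2)$, $H^*=\ff_p[e]/(e^2)$ and $A=(H^*\otimes R^*)/(e\otimes x)$, the inclusion $R^*\hookrightarrow A$, $x\mapsto 1\otimes x$, and the section $\sigma(e)=e\otimes 1$ give a surjective $\theta$ which is the identity on degree-$0$ graded pieces, yet $e\otimes x\in\Ker\theta$. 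So the Poincar\'e polynomial identity you are after is not a formal consequence of the $I$-adic filtration and projectivity; some additional geometric input is needed, and your proposal never supplies it.

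The paper supplies that input through the Demazure operators. Its injectivity argument localizes the coefficient ring to make $H^*$ free with basis $e_1,\ldots,e_n$, observes that $R^*$ is stable under each $\Delta_i$ (since the $\Delta_i$ are already defined on $A^*(E/B)$), and replaces the $I$-adic filtration by a different one: $R^{(k)}\subset R^*$ is the common kernel of all functionals $\tilde\varepsilon\circ A_1\circ\cdots\circ A_l$ in which each $A_j$ is a $\Delta_{i_j}$ or an $s_{i_j}$ and at most $k$ of them are $\Delta$'s. The Leibniz rule of Lemma~\ref{lemDiff} then lets one peel the $\sigma(e_i)$ factors off $\theta\big(\sum_i e_i\otimes y_i\big)$ when applying such a composite with $k+1$ Demazure operators, pushing each $y_i$ one step deeper into this filtration, and Lemma~\ref{lemDem} guarantees that the filtration separates points, so the kernel is forced to vanish. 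Without this or an equivalent use of the Demazure structure, the injectivity does not follow.
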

\begin{proof}
Indeed, by Lemma~\ref{modI} the map $\theta$ becomes an isomorphism
after tensoring with $A^*(\pt)$ over $R^*$. But this
tensoring is the same as taking the quotient modulo the ideal $I$. So, the cokernel $M$ of $\theta$ satisfies $MI=M$, and $I$ is nilpotent by Lemma~\ref{nilpideal}, so $M=0$.

To prove the injectivity of $\theta$ we localize the coefficient ring $A^*(\pt)$ at a prime ideal and so assume that $H^*$ is free. Choose a basis $e_1,\ldots e_n$ of $H^*$.

Note that $R^*$ is stable under the Demazure operators $\Delta_i$ (see Section~\ref{secdem}) because they are defined on $A^*(E/B)$. Consider the filtration on $R^*$ whose $k$-th term $R^{(k)}$ is the intersection of the kernels of all linear functions of the form $\tilde\varepsilon\circ A_1\circ\ldots\circ A_l$ such that each $A_j$ is either $\Delta_{i_j}$ or $s_{i_j}$, and $\Delta$'s appear at most $k$ times.

By Lemma~\ref{lemDem} it suffices to show that if $a\in H^*\otimes_{A^*(\pt)} R^{(k)}$ lies in the kernel of $\theta$, then $a\in H^*\otimes_{A^*(\pt)} R^{(k+1)}$. Write $a$ as $\sum e_i\otimes y_i$ with $y_i\in R^{(k)}$. Now applying $D=A_1\circ\ldots\circ A_l$ containing $k+1$ $\Delta$'s to the image of $a$, using Lemma~\ref{lemDiff} $l$ times and collecting the leftmost summands of each sum separately we get an expression of the form
$$
\sum_i w(\sigma(e_i))D(y_i)+\sum_{i,j} D'_j(\sigma(e_i))D_j(y_i),
$$
where each $D_j$ contains at most $k$ $\Delta$'s, $D'_j$ are of the same form as $D$, i.e., $D'_j$ is a composition of Demazure operators and simple reflections, and $w$ is a product of some simple reflections. On the other hand, this expression must be zero. Taking it modulo $I$ and applying $w^{-1}$ we see that each $D(y_i)$ belongs to $I$ and so $y_i$ belongs to $R^{(k+1)}$.
\end{proof}

\begin{lem}\label{cofree}
There is a (non-canonical and perhaps distinct from the isomorphism $\theta$ of Lemma~\ref{mult-iso}) isomorphism of $H^*$-comodules between $A^*(G/B)$ and the cofree comodule $H^*\otimes_{A^*(\pt)}R^*$.
\end{lem}
\begin{proof}
Recall that a cofree comodule over an $A^*(\pt)$-module $R^*$ is the module $H^*\otimes_{A^*(\pt)}R^*$ with the coaction $H^*\otimes_{A^*(\pt)}R^*\xrightarrow{\Delta\otimes\id} H^*\otimes_{A^*(\pt)}H^*\otimes_{A^*(\pt)}R^*$.

Lemma~\ref{comod-triv} implies that the $I$-adic filtration on $A^*(G/B)$ is compatible with the coaction by $H^*$. There is an isomorphism
$$
H^*\otimes_{A^*(\pt)}\big(I^kR^*/I^{k+1}R^*\big)\to I^kA^*(G/B)/I^{k+1}A^*(G/B)
$$
induced by the map $\theta$ of Lemma~\ref{mult-iso}. It is compatible with the coaction (where the left-hand side is considered as a cofree $H^*$-comodule), since the diagram
$$
\xymatrix{
A^*(G/B)\ar[r]\ar[d] & A^*(G)\otimes_{A^*(\pt)}A^*(G/B)\ar[d]\\
A^*(G)\ar[r]\ar[d] & A^*(G)\otimes_{A^*(\pt)}A^*(G)\ar[d]\\
H^*\ar[r] & H^*\otimes_{A^*(\pt)}H^*
}
$$
commutes, and the kernel of the composite vertical map on the left is generated by $I$ by Lemma~\ref{modI}. Since the quotients of the filtration are cofree, it (non-canonically) splits, so we have an isomorphism of comodules
$$A^*(G/B)\simeq H^*\otimes_{A^*(\pt)}\Big(\bigoplus_{k\ge 0} I^kR^*/I^{k+1}R^*\Big).$$
But by Assumption~\ref{assump} $R^*\simeq \bigoplus_{k\ge 0} I^kR^*/I^{k+1}R^*$ as $A^*(\pt)$-modules, and the claim follows.
\end{proof}

\begin{lem}\label{mainlem} We have the following ring isomorphism:
$$\imm\big(A^*(E/B\times E/B)\xrightarrow{\res} A^*(G/B\times G/B)\big)\simeq H^\vee\otimes_{A^*(\pt)}\End_{A^*(\pt)}R^*,$$
where the multiplication in the ring on the left-hand side is given by the composition product and $H^\vee$ is the $A^*(\pt)$-algebra dual to $H^*$.
\end{lem}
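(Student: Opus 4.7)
The plan is to identify both sides of the claimed isomorphism with the ring $\End_{H^*\mathrm{-comod}}(A^*(G/B))$ of $H^*$-comodule endomorphisms of $A^*(G/B)$, using Lemma \ref{cofree} and Theorem \ref{comod-preserving}.

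For the RHS, Lemma \ref{cofree} gives $A^*(G/B) \cong H^* \otimes_{A^*(\pt)} R^*$ as $H^*$-comodules. A general Hopf-theoretic calculation (a comodule endomorphism of the cofree comodule $H^* \otimes V$ is determined by its composition with the counit projection $H^* \otimes V \to V$) yields a ring isomorphism $\End_{H^*\mathrm{-comod}}(H^* \otimes V) \cong H^\vee \otimes \End V$ for any finitely generated projective $A^*(\pt)$-module $V$, where composition on the left corresponds to the tensor product of the algebra structures of $H^\vee$ and $\End V$. With $V = R^*$, this identifies the RHS with $\End_{H^*\mathrm{-comod}}(A^*(G/B))$. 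For the LHS, cellularity of $G/B$ together with K\"unneth and Poincar\'e duality identify $A^*((G/B)^2)$ with $\End_{A^*(\pt)}(A^*(G/B))$ as rings under correspondence composition (via the realization map of Definition \ref{realiz}); Theorem \ref{comod-preserving} says the image of $\res$ lands in $\End_{H^*\mathrm{-comod}}(A^*(G/B))$, giving a ring injection $\imm(\res) \hookrightarrow H^\vee \otimes \End R^*$.

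To upgrade this injection to an isomorphism, I apply Lemma \ref{cofree} once more, this time to the pair $(G \times G, E \times E)$. Writing $(H')^*$ and $(R')^*$ for the resulting $J$-invariant and rational subring, $(R')^* = \imm(\res)$ by construction and $A^*((G/B)^2) \cong (H')^* \otimes (R')^*$ over $A^*(\pt)$, whence $\rk (H')^* \cdot \rk (R')^* = h^2 d^2$ with $h = \rk H^*$ and $d = \rk R^*$. Combined with $\rk (R')^* \le h d^2$ coming from the injection above, any surjection $H^* \twoheadrightarrow (H')^*$ forces both $\rk (H')^* = h$ and $\rk (R')^* = h d^2$, making the injection an isomorphism. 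The candidate surjection is the natural map $y \mapsto [1 \otimes y]$ induced by the inclusion $A^*(G) \hookrightarrow A^*(G)^{\otimes 2}$ as the second factor and the quotient to $(H')^*$; it annihilates $J$ because $A^*(G) \otimes J \subseteq J'$.

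The main obstacle is proving surjectivity of $A^*(G) \to (H')^*$, which amounts to showing that modulo $J'$ every element of $A^*(G)^{\otimes 2}$ is equivalent to an element of $1 \otimes A^*(G)$. Trivializing $E \times E \cong E \times G$ via the diagonal section identifies $T := \imm(A^*(E \times E) \to A^*(G)^{\otimes 2})$ with a Hopf-Galois twist of $\imm(A^*(E) \to A^*(G)) \otimes A^*(G)$. The rationality of $1 \in A^*(E)$ then gives $(\iota \otimes \id)\Delta(y) \in T$ for every $y \in A^*(G)$, and $(\varepsilon \otimes \varepsilon)((\iota \otimes \id)\Delta(y)) = \varepsilon(y)$ implies $(\iota \otimes \id)\Delta(y) \equiv \varepsilon(y)(1 \otimes 1) \pmod{J'}$. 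This single relation collapses the two tensor factors of $A^*(G)^{\otimes 2}$ modulo $J'$, yielding the needed surjection. Ring compatibility of the resulting isomorphism follows from naturality of the realization map together with Theorem \ref{comod-preserving}.
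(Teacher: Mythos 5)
Your proposal is correct, and the first half of your argument coincides with the paper's: both use Theorem~\ref{comod-preserving} to land the image of $\res$ in $\End_{H^*}A^*(G/B)$, identify that ring with $H^\vee\otimes_{A^*(\pt)}\End_{A^*(\pt)}R^*$ via Lemma~\ref{cofree}, and then reduce the isomorphism to a rank count. Where you genuinely diverge is in how you compute $\rk\imm(\res)$. The paper simply invokes the relative cellularity result of Chernousov--Merkurjev and Nenashev--Zainoulline, which gives $A^*(E/B\times E/B)\simeq A^*(E/B)\otimes_{A^*(\pt)}A^*(G/B)$ compatibly with $\res$, so the image has rank $\rk R^*\cdot\rk A^*(G/B)$ and Lemma~\ref{mult-iso} finishes it. You instead feed the pair $(G\times G,\,E\times E)$ back into Lemma~\ref{mult-iso}, obtaining $\rk(H')^*\cdot\rk(R')^* = h^2d^2$ with $(R')^*=\imm(\res)$, and then prove Hopf-theoretically that $(H')^*$ is a quotient of $H^*$, so $\rk(H')^*\le h$ and hence $\rk(R')^*\ge hd^2$, forcing equality with the upper bound from the injection. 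The trade-off: your route stays entirely inside the paper's own Hopf-algebraic machinery and avoids the external citation, which is conceptually pleasing; the paper's route is shorter but outsources the key input. Your argument does have a few places that would need to be fleshed out for a complete write-up: the map $y\mapsto[1\otimes y]$ factoring through $H^*$ (you need $1\otimes a\in J'$ for $a\in\imm(\res_E)\cap\ker\varepsilon$, which follows since such $1\otimes a$ lies in $T\cap\ker\varepsilon$ via the second projection $E\times E\to E$), and the surjectivity step, which hides an induction on degree: from $(S\otimes\id)\Delta(y)\equiv\varepsilon(y)\pmod{J'}$ you get $y\otimes 1 \equiv 1\otimes y + (\text{terms }S(y')\otimes y''\text{ with }\deg y'<\deg y)$, and the cross terms reduce to $1\otimes A^*(G)$ only after applying the inductive hypothesis to the lower-degree factor $S(y')\otimes 1$. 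With those details supplied the argument is sound.
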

\begin{proof}
Theorem~\ref{comod-preserving} implies that the image of $A^*(E/B\times E/B)$ in
$$
A^*(G/B\times G/B)\simeq\End_{A^*(\pt)}(G/B)
$$
actually lies in $\End_{H^*}A^*(G/B)$. The latter is isomorphic to $H^\vee\otimes_{A^*(\pt)}\End_{A^*(\pt)}R^*$ by Lemma~\ref{cofree}, and since by Assumption~\ref{assump} all $A^*(\pt)$-modules are projective, it remains to compare the ranks over $A^*(\pt)$. But it follows from \cite{CM06} and \cite{NZ06} that
$$
A^*(E/B\times E/B)\simeq A^*(E/B)\otimes_{A^*(\pt)}A^*(G/B)
$$
as $A^*(E/B)$-modules, and the isomorphism is compatible with the restriction map. So the rank of the image is equal to
$$
\rk_{A^*(\pt)}R^*\cdot\rk_{A^*(\pt)}A^*(G/B)=(\rk_{A^*(\pt)}R^*)^2\cdot\rk_{A^*(\pt)}H^*\text{ by Lemma~\ref{mult-iso}},
$$
which is the same as the rank of $H^\vee\otimes_{A^*(\pt)}\End_{A^*(\pt)}R^*$.
\end{proof}

\begin{thm}[Two layers of motivic decompositions]\label{mainthm}
In the above notation assume additionally that the theory $A^*$ is free. Then there is a one-to-one correspondence between motivic decompositions of the $A^*$-motive of $E/B$ and direct sum decompositions of $A^*(G/B)$ as an $H^\vee$-module.

Assume further that $R^*$ is graded free as an $A^*(\pt)$-module:
\begin{equation}\label{decompR}
R^*\simeq\bigoplus_{i\in\mathcal{I}} A^*(\pt)(i)
\end{equation}
for some multiset of non-negative integers $\mathcal{I}$. Then there exists an $A^*$-motive $\mathcal R$ such that the $A^*$-motive of $E/B$ decomposes as follows:
\begin{equation}\label{decompGB}
M(E/B)\simeq\bigoplus_{i\in\mathcal{I}}\mathcal{R}\{i\}.
\end{equation}

There is a one-to-one correspondence between motivic decompositions of $\mathcal R$ in the category of $A^*$-motives and direct sum decompositions of $H^\vee$ as a module over itself.

Moreover, $A^*(\overline{\mathcal R})\simeq H^*$ as $H^*$-comodules.
\end{thm}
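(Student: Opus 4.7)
The plan is to unfold the theorem into four nested claims and attack each using the machinery already in place: Rost nilpotence, Lemma~\ref{mainlem}, and Lemma~\ref{cofree}.

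For the first correspondence, I would invoke Rost nilpotence (\ref{rostnilp}), which is available since $A^*$ is assumed free. This reduces idempotent decompositions in $\End(M(E/B))$ to idempotent decompositions of its image in $\End(M(\overline{E/B}))$ under restriction to the splitting field. By Lemma~\ref{mainlem} this image equals $H^\vee\otimes_{A^*(\pt)}\End_{A^*(\pt)}(R^*)$, which in view of Lemma~\ref{cofree} is canonically identified with the endomorphism ring $\End_{H^\vee}(A^*(G/B))$ of $A^*(G/B)\simeq H^*\otimes_{A^*(\pt)}R^*$ viewed as a left $H^\vee$-module (with $H^\vee$ acting on the $H^*$-factor via the coaction of $H^*$ on itself). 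The asserted bijection between motivic decompositions of $M(E/B)$ and $H^\vee$-module decompositions of $A^*(G/B)$ then follows from the standard correspondence between complete systems of orthogonal idempotents and module direct sum decompositions.

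Under the extra hypothesis that $R^*$ is graded free, I would pick a homogeneous basis producing orthogonal rank-one idempotents $p_i\in\End_{A^*(\pt)}(R^*)$, one for each $i\in\mathcal{I}$. The elements $1\otimes p_i\in H^\vee\otimes\End(R^*)$ then cut out a motivic decomposition $M(E/B)=\bigoplus_{i\in\mathcal{I}} M_i$ by Part~1. I define $\mathcal R$ to be the motive $M_{i_0}$ normalized so that $A^*(\overline{\mathcal R})$ sits in degree $0$; the comparison isomorphisms $A^*(\pt)(i_0)\xrightarrow{\sim}A^*(\pt)(i)$ lift, via the equivalence of Part~1, to motivic isomorphisms $M_i\simeq\mathcal R\{i\}$, yielding~\eqref{decompGB}. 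The comodule identification is direct: $A^*(\overline{\mathcal R})$ is the image of $1\otimes p_{i_0}$ acting on $H^*\otimes R^*$, i.e.\ $H^*\otimes p_{i_0}(R^*)\simeq H^*$, and the coaction, tracked through the isomorphism of Lemma~\ref{cofree}, restricts on this summand to the coproduct $\Delta$ of $H^*$.

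For the motivic decompositions of $\mathcal R$, I would apply the same Rost-nilpotence-plus-Lemma~\ref{mainlem} argument to $\mathcal R$ itself (Rost nilpotence is inherited by direct summands). The endomorphism ring of $\mathcal R$ modulo nilpotents is
$$(1\otimes p_{i_0})\bigl(H^\vee\otimes\End(R^*)\bigr)(1\otimes p_{i_0}) = H^\vee\otimes p_{i_0}\End(R^*)p_{i_0} = H^\vee,$$
since $p_{i_0}$ is a rank-one projector. Idempotent decompositions of this ring are precisely direct sum decompositions of $H^\vee$ as a module over itself, which gives the final bijection. The step I expect to be most delicate is the graded/shift bookkeeping in the passage from the rank-one $p_i$'s to the Tate-shifted summands $\mathcal R\{i\}$; everything else reduces, via the lemmas of the preceding sections and Assumption~\ref{assump}, to the standard formalism of idempotents in endomorphism rings.
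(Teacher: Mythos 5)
Your proposal is correct and follows essentially the same route as the paper: Rost nilpotence reduces to idempotents in the image of restriction, Lemma~\ref{mainlem} identifies that image with $H^\vee\otimes\End_{A^*(\pt)}(R^*)$, Lemma~\ref{cofree} identifies this with $\End_{H^\vee}(A^*(G/B))$, and the Morita correspondence then converts idempotent systems into $H^\vee$-module decompositions. The only cosmetic differences are that you make the rank-one idempotents $p_i$ and the corner-ring computation $p_{i_0}\End(R^*)p_{i_0}\simeq A^*(\pt)$ explicit, whereas the paper phrases the third bijection as a duality between $H^\vee$-module decompositions of $H^*$ and of $H^\vee$ itself.
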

\begin{proof}
By Rost Nilpotence (see Subsection~\ref{rostnilp}) motivic decompositions of $M(E/B)$ correspond to full systems of mutually orthogonal idempotents in
$$
\imm\big(A^*(E/B\times E/B)\xrightarrow{\res} A^*(G/B\times G/B)\big).
$$
Applying Lemma~\ref{mainlem} and the Morita equivalence we see that these decompositions correspond to direct sum decompositions of $H^*\otimes_{A^*(\pt)}R^*\simeq A^*(G/B)$ as an $H^\vee$-module.

The second claim is clear now: decomposition~\eqref{decompR} of $R^*$ as an $A^*(\pt)$-module implies the respective decomposition of $A^*(G/B)$ as an $H^\vee$-module and hence decomposition~\eqref{decompGB}. Since $H^*$ is finitely generated projective over $A^*(\pt)$, the structure of an $H^\vee$-module determines the structure of an $H^*$-comodule and vice versa. So, decompositions of $H^*$ as an $H^\vee$-module correspond to decompositions of $H^\vee$ as a module over itself. Finally, $A^*(\overline{\mathcal R})$ is isomorphic to $H^*$ as an $H^\vee$-module, so it is isomorphic to $H^*$ as an $H^*$-comodule.
\end{proof}

\begin{rem}
Condition~\eqref{decompR} seems difficult to check. However, assume additionally that all graded modules of constant rank over $A^*(\pt)$ are free (and not only projective) and assume that $H^*$ is also graded free over $A^*(\pt)$. Then Lemma~\ref{mult-iso} and a rank counting imply that the module $R^*$ is graded free as well.
\end{rem}

Finally, we give a Hopf-theoretic proof of \cite[Theorem~5.13]{PSZ08}.

\begin{cor}\label{maincor}
Let $p$ be a prime number and $A^*=\CH^*\otimes\ff_p$.

Then the respective motive $\mathcal R$  from Theorem~\ref{mainthm} is indecomposable and its Poincar\'e polynomial over a splitting field of $E$ is given by formula~\eqref{fpoin}.
\end{cor}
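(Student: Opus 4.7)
The plan is to reduce both assertions to statements about the Hopf algebra $H^*$ itself, using the final paragraph of Theorem~\ref{mainthm}: on the one hand, the decompositions of $\mathcal R$ are classified by direct-sum decompositions of $H^\vee$ as a module over itself, and on the other hand $A^*(\overline{\mathcal R})\simeq H^*$ as graded $\ff_p$-vector spaces, so the Poincar\'e polynomial of $\mathcal R$ equals the Poincar\'e polynomial of $H^*$.

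For indecomposability, I would argue that $H^\vee$ has no non-trivial idempotents. By Subsection~\ref{strHopf} the bialgebra $H^*=A^*(G)/J$ (a quotient bialgebra of $A^*(G)=\Ch^*(G)$ by Lemma~\ref{bi-ideal}) is a commutative graded connected finite-dimensional bialgebra over $\ff_p$, hence isomorphic as an algebra to $\ff_p[f_1,\dots,f_s]/(f_1^{p^{l_1}},\dots,f_s^{p^{l_s}})$. Its linear dual $H^\vee$ is then also a graded connected finite-dimensional $\ff_p$-algebra (with $H^\vee_0=\ff_p$ and the augmentation ideal consisting of elements of positive degree, hence nilpotent). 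Any idempotent $e\in H^\vee$ must have image in $H^\vee_0=\ff_p$ equal to $0$ or $1$; in the first case $e$ lies in the nil augmentation ideal and is therefore zero, in the second case $1-e$ is. Thus $H^\vee$ admits only the trivial decomposition as a module over itself, and by Theorem~\ref{mainthm} the motive $\mathcal R$ is indecomposable.

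For the Poincar\'e polynomial, I would translate the abstract isomorphism class of $H^*$ into the combinatorial data of the classical $J$-invariant. By Remark~\ref{jinv-same}, the surjection $\varphi\colon\Ch^*(G)\onto H^*$ admits, after an upper-triangular change of generators in $H^*$ and a lower-triangular change of generators in $\Ch^*(G)$ (so the numerical value of the $J$-invariant of Definition~\ref{def71} is unchanged), a description in which $\varphi$ sends each $e_i$ either to $0$ or to a generator $f_{m_i}$ of the same codimension $d_i$, with $l_{m_i}=j_i$. Discarding the $i$ with $j_i=0$ (for which $\varphi(e_i)=0$) and keeping those with $j_i>0$, we obtain a graded algebra isomorphism
\begin{equation*}
H^*\simeq\bigotimes_{i\colon j_i>0}\ff_p[x_i]/(x_i^{p^{j_i}}),\qquad \deg x_i=d_i.
\end{equation*}
The Poincar\'e polynomial of a truncated polynomial algebra $\ff_p[x]/(x^{p^{j}})$ with $\deg x=d$ is $(t^{dp^j}-1)/(t^d-1)$ (which equals $1$ when $j=0$), so multiplying over all $i\in\{1,\dots,r\}$ we recover exactly formula~\eqref{fpoin}.

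I expect the only non-routine point to be the identification of $H^*$ with the truncated polynomial algebra whose exponents are the $j_i$'s of Definition~\ref{def71}; this is precisely the content of Remark~\ref{jinv-same}, whose LEU-decomposition argument is what bridges the Hopf-theoretic quotient $A^*(G)/J$ and the original combinatorial definition of the $J$-invariant from \cite{PSZ08}. Once that dictionary is in place, both the Poincar\'e polynomial computation and the connectedness argument for indecomposability are immediate.
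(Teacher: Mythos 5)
Your proof is correct and follows the same route as the paper's: indecomposability comes from connectedness of the dual algebra $H^\vee$ (your explicit "graded connected finite-dimensional, hence only trivial idempotents" argument is exactly what the paper's terse "$H^0=\ff_p$, so there are no non-trivial decompositions of $H^\vee$" is pointing at), and the Poincar\'e polynomial is read off from the last assertion of Theorem~\ref{mainthm} together with the dictionary of Remark~\ref{jinv-same}. Your write-up simply fills in the details the paper leaves implicit.
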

\begin{proof}
We have $H^0=\ff_p$, so there are no non-trivial decompositions of $H^\vee$ as an $H^\vee$-module. By the last assertion of Theorem~\ref{mainthm} we see that the Poincar\'e polynomial of $\mathcal R$ is the same as the Poincar\'e polynomial of $H^*$, which in view of Remark~\ref{jinv-same} is given by formula~\eqref{fpoin}.
\end{proof}

\section{Applications to Chow motives: generalities}\label{sec-gen}

\begin{ntt}[Computing coaction]\label{sec-coac}
We use the following method for computing the coaction of $\CH^*(G)$ on $\CH^*(G/P)$ for a parabolic subgroup $P$ of $G$. Choose a parabolic subgroup $Q$ in $G$ and denote the commutator subgroup of its Levi subgroup by $C$. By \cite[Lemma~2.3 and Lemma~2.4]{PS12} $\CH^*(C)$ is a quotient of $\CH^*(G)$. More precisely, we have the following result.

\begin{lem}\label{chow-par}
$\CH^*(C)\simeq\CH^*(G)\otimes_{\CH^*(G/Q)}\zz$.
\end{lem}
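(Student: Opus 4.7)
The plan is to interpret both sides as quotients of $\CH^*(G/B)$ and to apply the right exact sequence from Section~\ref{sec31} twice, once for $G$ and once for $C$. First, I unwind the tensor product. Both structure maps $\CH^*(G/Q)\to\CH^*(G)$ and $\CH^*(G/Q)\to\zz$ are pullbacks: the former along $\pi\colon G\to G/Q$, the latter along the inclusion of the base point $[Q]\in G/Q$. Hence the tensor product equals $\CH^*(G)$ modulo the ideal generated by $\pi^*\bigl(\CH^{>0}(G/Q)\bigr)$. Combining with Kiritchenko's surjection $\CH^*(G/B)\twoheadrightarrow\CH^*(G)$ whose kernel is $c\bigl(\CH^{>0}(\B T)\bigr)\cdot\CH^*(G/B)$, I get a description of $\CH^*(G)\otimes_{\CH^*(G/Q)}\zz$ as $\CH^*(G/B)$ modulo the sum of the two ideals generated by $c\bigl(\CH^{>0}(\B T)\bigr)$ and by $p^*\bigl(\CH^{>0}(G/Q)\bigr)$, where $p\colon G/B\to G/Q$ is the projection.

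Next I would analyse the flag bundle $p$. Its fibre over $[Q]$ is $L/B_L$, where $L$ is the Levi of $Q$ and $B_L=B\cap L$; since the connected centre $Z^0\subset L$ is absorbed by $B_L$, this fibre coincides with $C/B_C$, where $B_C=B_L\cap C$. Because the fibre is cellular and $p$ is Zariski-locally trivial, the Leray--Hirsch principle (with line bundles coming from the characteristic map providing fibrewise generators) makes $\CH^*(G/B)$ free over $\CH^*(G/Q)$, so the quotient by $p^*\bigl(\CH^{>0}(G/Q)\bigr)$ is exactly $\CH^*(C/B_C)$. I therefore arrive at
$$
\CH^*(G)\otimes_{\CH^*(G/Q)}\zz\ \simeq\ \CH^*(C/B_C)\big/J,
$$
where $J$ is the ideal generated by the image of $c\bigl(\CH^{>0}(\B T)\bigr)$ under the restriction to the fibre. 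Functoriality of the characteristic map factorises this restriction as $\CH^*(\B T)\to\CH^*(\B T_C)\xrightarrow{c_C}\CH^*(C/B_C)$, where $T_C=(T\cap C)^0$ is a maximal torus of $C$, and the right exact sequence of Section~\ref{sec31} applied to the split semisimple group $C$ identifies $\CH^*(C/B_C)$ modulo $c_C\bigl(\CH^{>0}(\B T_C)\bigr)\cdot\CH^*(C/B_C)$ with $\CH^*(C)$.

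The main obstacle is showing that $J$ equals the ideal generated by $c_C\bigl(\CH^{>0}(\B T_C)\bigr)$. The character restriction $X^*(T)\to X^*(T_C)$ has image of finite index equal to the order of the finite subgroup $T_C\cap Z^0$, so a priori the two integral ideals in $\CH^*(C/B_C)$ could differ; one needs a lattice computation to see that, after applying $c_C$ and generating as an ideal, the two agree. The key input is that $Z^0$ acts trivially on the fibre $C/B_C$, so that only the $T_C$-component of characters of $T$ actually contributes to $J$, combined with the $W(C)$-action on $\CH^*(\B T_C)$. This lattice-theoretic step is precisely the content of \cite[Lemmas~3 and~4]{PS12} and reduces to a manipulation inside $\zz[X^*(T_C)]$.
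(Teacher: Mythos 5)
Your argument is correct and is essentially the one the paper gives, merely rephrased in terms of ideals rather than tensor products. The paper applies \cite[Proposition~1]{EG97} to the flag bundle $G/B\to G/Q$ to write $\CH^*(G/B)$ as a free $\CH^*(G/Q)$-module with fibre $\CH^*(Q/B)$, then tensors the resulting isomorphism $\CH^*(Q/B)\simeq\zz\otimes_{\CH^*(G/Q)}\CH^*(G/B)$ with $\zz$ over $\CH^*(\B T)$, invoking Corollary~\ref{G-tors} to turn $\CH^*(G/B)\otimes_{\CH^*(\B T)}\zz$ into $\CH^*(G)$ and \cite[Lemma~3]{PS12} to identify $\CH^*(Q/B)\otimes_{\CH^*(\B T)}\zz$ with $\CH^*(C)$. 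Your Leray--Hirsch step plays exactly the role of the Edidin--Graham citation, and your Kiritchenko quotient description is equivalent to Corollary~\ref{G-tors}; you then reduce to the same lattice issue about $X^*(T)\to X^*(T_C)$ and correctly offload it to \cite[Lemmas~3 and~4]{PS12}, precisely where the paper's proof also leans. The only presentational difference is that the paper manipulates tensor products throughout and never needs to track which ideal in $\CH^*(C/B_C)$ is being generated, which makes the check you flag as ``the main obstacle'' cleaner to dispatch; otherwise the two arguments are the same.
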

\begin{proof}
By \cite[Proposition~1]{EG97} applied to the map $G/B\to G/Q$ we have
$$
\CH^*(G/B)\simeq\CH^*(G/Q)\otimes_{\zz}\CH^*(Q/B)
$$
as $\CH^*(G/Q)$-modules, so
$$
\CH^*(Q/B)\simeq\zz\otimes_{\CH^*(G/Q)}\CH^*(G/B).
$$
Moreover, the respective map from $\CH^*(G/B)$ to $\CH^*(Q/B)$ coincides with the restriction to the fiber over the generic point of the projection map from $G/B$ to $G/Q$ and so preserves multiplication. Tensoring with $\zz$ over $\CH^*(\B T)$ and using Corollary~\ref{G-tors} and \cite[Lemma~2.3]{PS12} we obtain the result.
\end{proof}

Consider a generic $C$-torsor $E$ (in the sense that $\CH^*(E)=\zz$; cf. \cite[Lemma~3.1]{PS17}) and the respective variety ${}_E(G/P)$, where $G/P$ is considered as a $C$-variety. Note that the group $_EG$ is isotropic. Then by \cite{CGM05} and \cite{Br05} the Chow motive of ${}_E(G/P)$ decomposes as a direct sum of Tate twists of motives of projective homogeneous {$C$-va\-rie\-ties}, and by Theorem~\ref{comod-preserving} this decomposition is compatible with the coaction
$$
\CH^*(G/P)\to\CH^*(C)\otimes\CH^*(G/P),
$$
so we can compute the coaction modulo the kernel of the natural map $\CH^*(G)\to\CH^*(C)$ once we know the coaction for projective $C$-homogeneous varieties. Since the rank of $C$ is strictly smaller than the rank of $G$, this gives an inductive procedure to compute the coaction. We will illustrate this method in the proofs of Lemmas~\ref{quad-eq}, \ref{E7P7mod2} and \ref{E8mod3} below.
\end{ntt}

\begin{ntt}
Recall that for a fixed prime $p$ we write $\Ch^*$ instead of $\CH^*\otimes\ff_p$.
By Corollary~\ref{maincor} the Chow motive of $E/B$ modulo $p$ decomposes into a direct sum of indecomposable motives which are isomorphic to non-negative Tate twists of an indecomposable motive which we denote by $R_p(E)$ (see also Section~\ref{jinvold}). We also have
$$
H^*\simeq\ff_p[e_1,\ldots,e_r]/(e_1^{p^{j_1}},\ldots,e_r^{p^{j_r}}),
$$
where $(j_1,\ldots,j_r)$ is the $J$-invariant of $E$.

If $P$ is a parabolic subgroup, the Chow motive of $E/P$ modulo $p$ can contain a summand isomorphic to a Tate twist of $R_p(E)$ as well. Now we give a combinatorial criterion when this happens.

\begin{thm}\label{RpE}
Let $P$ be a parabolic subgroup of a split semisimple algebraic group $G$ over a field $F$ and let $E$ be a $G$-torsor over $F$. Denote by $\rho$ the coaction of $H^*$ on $\Ch^*(G/P)$.

Every summand of the Chow motive $M(E/P)$ with coefficients $\ff_p$ which is isomorphic to a Tate twist of $R_p(E)$ has a generic point $\alpha\in\Ch^*(G/P)$ such that for some $\beta\in\Ch^*(G/P)$ we have
$$
\rho(\beta)=e_J\otimes\alpha+\sum a_i\otimes b_i
$$
for some $a_i$, $b_i$
with $\codim a_i<\codim e_J$, where $e_J=e_1^{p^{j_1}-1}\cdots e_r^{p^{j_r}-1}$.

Conversely, for every $\beta$ of this form there is a summand of the Chow motive $M(E/P)$ with coefficients $\ff_p$ which is isomorphic to a Tate twist of $R_p(E)$ and whose generic point is $\alpha$.
\end{thm}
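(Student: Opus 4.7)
The strategy is Hopf-theoretic. Since $H^*$ is a graded-connected finite-dimensional commutative Hopf $\ff_p$-algebra, it is a local Frobenius algebra with one-dimensional socle $\ff_p\cdot E_J$; dually $H^\vee$ is local Frobenius with socle $\ff_p\cdot\epsilon$, where $\epsilon\in H^\vee$ is the linear form dual to $E_J$ in the monomial basis. Viewing $\Ch^*(G/P)=\Ch^*(\overline{G/P})$ as an $H^\vee$-module via $h\cdot x=(h\otimes\id)\rho(x)$, the element $\epsilon\cdot\beta$ equals the coefficient of $E_J$ in the monomial-basis expansion of $\rho(\beta)$. So the identity $\rho(\beta)=E_J\otimes\alpha+\sum a_i\otimes b_i$ with $\codim a_i<\codim E_J$ is equivalent to $\epsilon\cdot\beta=\alpha$.

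\emph{Necessity.} I first extend Theorem~\ref{mainthm} from $E/B$ to $E/P$: by Theorem~\ref{comod-preserving} and Rost nilpotence (Subsection~\ref{rostnilp}), motivic summands of $M(E/P)$ correspond to rational $H^\vee$-equivariant idempotent decompositions of $\Ch^*(G/P)$; a summand isomorphic to a Tate twist of $R_p(E)$, whose realization over the splitting field carries comodule type $H^*\{k\}$ by the last part of Theorem~\ref{mainthm}, corresponds to a rational free rank-one $H^\vee$-submodule. Given such a summand $N\simeq R_p(E)\{k\}$, pick a comodule isomorphism $\Ch^*(\overline{N})\simeq H^*\{k\}$ and let $\alpha,\beta\in\Ch^*(G/P)$ be the images of $1,E_J\in H^*$; then $\alpha$ lies in codimension $k$ and is the rational generic point of $N$, while $\beta$ lies in codimension $k+\codim E_J$. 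Under the isomorphism $\rho(\beta)$ corresponds to $\Delta(E_J)$, and since $H^*$ is graded-connected one has $\Delta(E_J)=E_J\otimes 1+(\text{terms in }H^+\otimes H^+)$, yielding the required form for $\rho(\beta)$ after transporting back.

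\emph{Sufficiency.} Given $\beta$ of the stated form, one has $\epsilon\cdot\beta=\alpha\neq 0$, so $\Ann_{H^\vee}(\beta)$ avoids the socle generator $\epsilon$. Since every nonzero ideal in the local ring $H^\vee$ contains its socle, $\Ann_{H^\vee}(\beta)=0$ and $H^\vee\cdot\beta\subseteq\Ch^*(G/P)$ is free of rank one. Since $H^\vee$ is Frobenius, free modules are injective, hence $H^\vee\cdot\beta$ is a direct $H^\vee$-summand of $\Ch^*(G/P)$. Invoking the $G/P$-analog of Theorem~\ref{mainthm} in the reverse direction, this summand lifts to a direct summand of $M(E/P)$ isomorphic to a Tate twist of $R_p(E)$, whose generic point is $\alpha=\epsilon\cdot\beta$.

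\emph{Main obstacle.} The delicate point is the $G/P$-analog of Theorem~\ref{mainthm} and Lemma~\ref{mainlem}, formally stated only for $G/B$. The proof should adapt using $\Ch^*(E/P\times E/P)\simeq\Ch^*(E/P)\otimes_{\Ch^*(\pt)}\Ch^*(G/P)$ from \cite{CM06} and \cite{NZ06}; alternatively one may pull $\beta$ back along the cellular fibration $E/B\to E/P$, apply Theorem~\ref{mainthm} directly to $M(E/B)$, and descend the resulting $R_p(E)\{k\}$-summand to $M(E/P)$ using the compatibility of the $H^\vee$-coaction with pullback and pushforward (Lemma~\ref{pushf}). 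The hardest subpoint is showing that the $H^\vee$-equivariant projector onto $H^\vee\cdot\beta$ in $\Ch^*(G/P)$ is realized by a rational correspondence from $\Ch^*(E/P\times E/P)$, i.e., the surjectivity analog of Lemma~\ref{mainlem} for $G/P$.
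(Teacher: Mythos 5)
Your necessity argument is essentially the paper's: identify $\Ch^*(\overline{R_p(E)})\simeq H^*$ via the last assertion of Theorem~\ref{mainthm}, and take $\alpha,\beta$ to be the images of $1$ and $E_J$ under the realization of the inclusion $R_p(E)\to M(E/P)\{-m\}$; the displayed form of $\rho(\beta)$ then drops out of the comodule-map diagram from Theorem~\ref{comod-preserving} because $\Delta(E_J)=E_J\otimes 1+(\text{terms in lower codimension on the left})$. You do not actually need the ``$G/P$-analog of Theorem~\ref{mainthm}'' you invoke there; the commutativity of the coaction square for the inclusion correspondence already suffices. Your Frobenius-algebra observations (one-dimensional socle, $\epsilon\cdot\beta=\alpha\neq 0$ forces $\Ann_{H^\vee}(\beta)=0$, free $\Rightarrow$ injective $\Rightarrow$ summand) are correct at the level of abstract $H^\vee$-modules and are a nice reformulation.

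The gap is exactly the ``main obstacle'' you flag, and it does not close by the route you suggest. The surjectivity analog of Lemma~\ref{mainlem} for $G/P$ --- that the image of $\Ch^*(E/P\times E/P)$ in $\Ch^*(G/P\times G/P)$ equals $\End_{H^*}\Ch^*(G/P)$ --- is false in general. Lemma~\ref{mainlem} holds for $G/B$ precisely because $\Ch^*(G/B)$ is a cofree $H^*$-comodule (Lemma~\ref{cofree}), which lets the rank count match up. For a general $P$ the comodule $\Ch^*(G/P)$ decomposes into many $H^*$-comodule summands, so $\End_{H^*}\Ch^*(G/P)$ is large, while the image of rational cycles is typically much smaller (e.g.\ for an anisotropic quadric $E/P_1$ the motive may be indecomposable even though $\Ch^*(\overline{Q})$ has a rich comodule decomposition). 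So knowing that a projector exists $H^\vee$-equivariantly does not by itself make it rational; the rationality has to be produced by hand.

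The paper circumvents this by never working inside $\Ch^*(E/P\times E/P)$ at all. Instead it lifts $\beta$ to $\Ch^*_T(G/P)$ and pushes it through the chain $\Ch^*_T(G/P)\to\Ch^*_T(E\times E/P)\simeq\Ch^*(E/B\times E/P)$, obtaining a rational cycle $x$ in $\Ch^*(E/B\times E/P)$ whose restriction has the shape $\sigma(E_J)\times\alpha+(\text{controllable error})$; multiplying by a suitable rational $\gamma\in R^*$ (so that $\gamma\sigma(E_J)=[\pt]$ by Lemma~\ref{mult-iso}) yields a rational correspondence $f=[\pt]\times\alpha+\cdots$ in $\Ch^*(G/B\times G/P)$. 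A second rational correspondence $g=\alpha^\vee\times 1+\cdots$ in the opposite direction comes from \cite{EG97} applied to the projection $E/B\times E/P\to E/B$. Composing, $(f\circ g)$ and $(g\circ f)$ are eventually idempotent, giving a rational isomorphism between summands, and Rost nilpotence finishes. So the paper's converse is an explicit $E/B\!\leftrightarrow\!E/P$ correspondence construction, not a Morita-style lifting on $E/P$ alone; this is what actually supplies the rationality your abstract argument is missing. To complete your proof you would need to replace the appeal to the (false) surjectivity analog with such a direct construction.
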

\begin{proof}
Assume that the motive $R_p(E)\{m\}$ is a direct summand of the motive $M(E/P)$ for some integer $m$.
Consider the following diagram which is commutative by Theorem~\ref{comod-preserving}
$$
\xymatrix{
\Ch^*(\overline{R_p(E)})\ar[d]^-{\iota}\ar[r]&H^*\otimes\Ch^*(\overline{R_p(E)})\ar[d]^-{\id\otimes\iota}\\
\Ch^*(M(G/P)\{-m\})\ar^-{\rho}[r]&H^*\otimes\Ch^*(M(G/P)\{-m\})
}
$$
where $\iota$ is induced by 
the embedding of the motivic summand $R_p(E)\to M(E/P)\{-m\}$.

By the last assertion of Theorem~\ref{mainthm} we can identify $H^*\simeq \Ch^*(\overline{R_p(E)})$.

We can take $\alpha$ to be the image of $1$ and $\beta$ to be the image of $e_J$ under the comodule map $H^*\xrightarrow{\iota}\Ch^*(M(G/P)\{-m\})$. This implies the first claim.

To prove the converse statement consider the following commutative diagram:
$$
\xymatrix{
\Ch^*_T(G/P)\ar^-{\pr_{G/P}^*}[r]\ar@{->>}[d]&\Ch^*_T(E\times G/P)\ar[r]\ar[d]&\Ch^*_T(E\times E/P)\simeq\Ch^*(E/B\times E/P)\ar[d]\\
\Ch^*(G/P)\ar@{=}[d]\ar^-{\pr_{G/P}^*}[r]&\Ch^*(E\times G/P)\ar[r]\ar^{\res}[d]&\Ch^*(E\times E/P)\ar^{\res}[d]\\
\Ch^*(G/P)\ar[r]^-{\pr_{G/P}^*}&\Ch^*(G\times G/P)\ar[r]&\Ch^*(G\times G/P)\simeq \Ch^*(G)\otimes_{\ff_p}\Ch^*(G/P).
}
$$

The torus $T$ acts on $G/P$ naturally on the left, on $E\times G/P$ by the rule ${t(e,gP)=(et^{-1},tgP)}$ and on $E\times E/P$ on the first factor only. The last action is obviously free, and so the $T$-equivariant Chow ring can be identified with the Chow ring of the quotient (see \cite[Proposition~8(a)]{EG98}). The vertical maps from the top to the middle row are the forgetful maps, and the vertical maps from the middle row to the bottom row are the restriction maps.

The second map in each row is induced by the isomorphism $E\times G/P\to E\times E/P$ (resp. $G\times G/P\to G\times G/P$ for the bottom row) sending $(e,gP)$ to $(e,egP)$ (resp. $(h,gP)$ to $(h,hgP)$). An immediate check shows that these maps are $T$-equivariant (note that we let $T$ act on $G\times G/P$ in two different ways). The composite bottom map is induced by the map $G\times G/P\to G/P$ sending $(h,gP)$ to $h^{-1}gP$ and so coincides with $(S\otimes\id)\circ\tilde\rho$, where $S$ denotes the antipode in the Hopf algebra $\Ch^*(G)$ and $\tilde\rho$ is the coaction map.

For convenience of the reader let us simplify the diagram:
\begin{equation}\label{diagCh}
\xymatrix{
\Ch^*_T(G/P)\ar@{->>}[d]\ar[r]&\Ch^*(E/B\times E/P)\ar[d]\\
\Ch^*(G/P)\ar[r]^-{(S\otimes\id)\circ\tilde\rho}\ar[rd]&\Ch^*(G)\otimes_{\ff_p}\Ch^*(G/P)\ar[d]\\
&H^*\otimes_{\ff_p}\Ch^*(G/P).
}
\end{equation}

We know that in $\Ch^*(G)$
$$
\Delta(e_i)=e_i\otimes 1+1\otimes e_i+\sum u_j\otimes v_j
$$
with $\codim u_j>1$ and $\codim v_j>1$, so
$$
S(e_i)=-e_i+\text{ lower degree terms,}
$$
where the order of terms is as in Definition~\ref{def71}.
By dimensional reasons this implies that the image of $S(e_J)$ in $H^*$ is $\pm e_J$.

Recall that in Section~\ref{secgensplit} we denoted by $\sigma\colon H^*\to \Ch^*(G/B)$ a section of the canonical homomorphism $\Ch^*(G/B)\to H^*$.
Consider now any preimage of $\pm\beta$ in the top left corner. It goes to some cycle $x$ in the top right corner, that is in $\Ch^*(E/B\times E/P)$. Using the commutativity of the diagram and Lemma~\ref{modI} we have
$$
\bar x=\sigma(e_J)\times\alpha+\sum a'_i\times b_i+\sum \delta_i c_i\times d_i
$$
with $\delta_i\in R^{>0}$, some cycles $a'_i$, $c_i$, $d_i$, and the same $b_i$'s as in the statement of the theorem.

By Lemma~\ref{mult-iso} we can write the class of a rational point $[\pt]$ in $\Ch^*(G/B)$ as $\gamma\sigma(e_J)$ for some rational $\gamma\in R^*$ of the maximal possible degree (in particular, $\gamma\delta=0$ for every $\delta\in R^{>0}$). Then
$$
f=(\gamma\times 1)\bar x=[\pt]\times\alpha+\sum a''_i\times b_i
$$
with $\codim a''_i<\dim G/B$ is a rational cycle in $\Ch^*(G/B\times G/P)$.

Denote by $\alpha^\vee$ a Poincar\'e dual to $\alpha$, meaning that $\deg(\alpha\alpha^\vee)=1$. Using \cite[Proposition~1]{EG97} applied to the projection map $E/B\times E/P\to E/B$ (or diagram~\eqref{diagCh} again) we see that there is a rational cycle in $\Ch^*(G/P\times G/B)$ of the form
$$
g=\alpha^\vee\times 1+\sum l_j\times m_j
$$
with $\codim m_j>0$.

Now we have
\begin{align*}
&f\circ g=\alpha^\vee\times\alpha+\sum_{i,j}\deg(a_i''m_j)l_j\times b_i;\\
&g\circ f=[\pt]\times 1+\sum_{i,j}\deg(l_jb_i)a_i''\times m_j.
\end{align*}
Since $\Ch^*(G/B\times G/B)$ and $\Ch^*(G/P\times G/P)$ are finite, there is a positive integer $M$ such that $(f\circ g)^{\circ M}$ and $(g\circ f)^{\circ M}$ are idempotents, and $g\circ (f\circ g)^{\circ(M-1)}$ and $f$ are rational isomorphisms between the corresponding summands. Applying the Rost Nilpotence principle (see Section~\ref{rostnilp}) we get the result.
\end{proof}
\end{ntt}

\section{Applications to Chow motives: quadrics}\label{sec-quad}

Consider the projective quadric $Q$ corresponding to a non-degenerate quadratic form $q$ of rank $n=2m+2$ or $2m+1$ with trivial discriminant. Then ${\overline Q}\simeq\SO_n/P_1$, where $P_1$ is the maximal parabolic subgroup corresponding to the subset $\{2,\ldots,m\}$ of the Dynkin diagram of the respective group (in fact, the results of this section (Lemma~\ref{l1}, Lemma~\ref{quad-eq} and Theorem~\ref{conn-quad}) hold for fields of an arbitrary characteristic, i.e., one can write $\mathrm{O}^+_n$ instead of $\SO_n$). The enumeration of simple roots follows Bourbaki.

\begin{lem}\label{l1}
$\CH^*(\SO_n)\otimes\ff_2\simeq\ff_2[e_1,\ldots,e_m]/(e_i^2=e_{2i})$ with $\codim e_i=i$ if $i\le m$ and $e_i=0$ if $i>m$.
\end{lem}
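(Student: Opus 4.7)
The plan is to combine Corollary~\ref{G-tors} with classical results on mod-$2$ cohomology rings of classical groups. By Lemma~\ref{Kunn} (or equivalently Corollary~\ref{G-tors} applied with a trivial torsor), we have
$$\CH^*(\SO_n)\otimes\ff_2\simeq\Ch^*(\SO_n/B)\otimes_{\Ch^*(\B T)}\ff_2,$$
i.e. $\Ch^*(\SO_n)$ is the quotient of $\Ch^*(\SO_n/B)$ by the ideal generated by the image of the characteristic map $c\colon\Ch^*(\B T)\to\Ch^*(\SO_n/B)$ in positive codimension. Thus the problem reduces to identifying this quotient with the presentation in the statement.

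The most efficient route is to invoke Kac's presentation \cite{Kac85} of $\Ch^*(G)$ for a split semisimple group $G$, which was already cited in Subsection~\ref{strHopf}. For $\SO_n$ at $p=2$, Kac's computation gives exactly
$$\ff_2[e_1,\ldots,e_m]/(e_i^2=e_{2i}),\qquad e_i=0\text{ for }i>m,\qquad \codim e_i=i.$$
In this presentation the truly independent generators are the $e_i$ with $i$ odd and $1\le i\le m$, each of vanishing order $2^{k_i}$ for $k_i$ the smallest integer with $2^{k_i}i>m$, so that the total dimension over $\ff_2$ equals $2^m$, matching the known total dimension of $\Ch^*(\SO_n)$.

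A self-contained derivation can be organized as follows. Choose a split maximal torus so that $\Ch^*(\B T)=\ff_2[x_1,\ldots,x_m]$ with $\codim x_i=1$. In type $B_m$ the Weyl-group invariants that generate the image of $c$ in positive degree are the elementary symmetric polynomials in $x_1^2,\ldots,x_m^2$, which modulo~$2$ coincide with the squares of the elementary symmetric polynomials in $x_1,\ldots,x_m$; in type $D_{m+1}$ one has a completely analogous statement after handling the Pfaffian, which does not affect the mod-$2$ picture. Tracing these generators through the Borel presentation of $\Ch^*(\SO_n/B)$ and taking the quotient produces the claimed algebra. Geometrically, the classes $e_i$ can be identified with the pullbacks to $\SO_n$ of the tautological generators $z_i\in\Ch^i(\OGr(\max,\bar q))$ recalled in Subsection~\ref{jinvold}, and the relations $e_i^2=e_{2i}$ then encode the Pieri-type product law on the maximal orthogonal Grassmannian.

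The main obstacle, and the reason the proof is not entirely formal, is that $2$ is a torsion prime for $\SO_n$: the Borel picture therefore requires a careful comparison between $\Ch^*(\B T)^W$ and $\Ch^*(\B T)$ at the prime $2$, rather than the clean statement that would be available at primes not dividing the torsion index. This is precisely what makes Kac's input substantive; once it is in place, the remaining verification---namely the dimension count ensuring no additional relations beyond $e_i^2=e_{2i}$ and $e_i=0$ for $i>m$---is routine.
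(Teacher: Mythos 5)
Your proof is correct and rests on the same key input as the paper's own one-line proof, namely Kac's Table~II from \cite{Kac85}. The supporting context you offer---the Borel-style reduction via the characteristic map, the dimension count, and the geometric interpretation via $\OGr(\max,\bar q)$---is accurate, and the latter is precisely what the paper's cross-reference to \cite[Proposition~3.1]{Vi05} supplies.
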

\begin{proof}
Follows from \cite[Table~II]{Kac85}, cf. \cite[Proposition~3.1]{Vi05}.
\end{proof}

We denote by $h$ the generator in $\Ch^1(\overline Q)$ and by $l$ the generator in $\Ch^m(\overline Q)$ (or one of two generators distinct from $h^m$ in the even-dimensional case).

We denote by $\rho$ the map defining the comodule structure on $\Ch^*(\overline Q)$.

\begin{lem}\label{quad-eq}
$\rho(l)=\sum_{i=1}^m e_i\otimes h^{m-i}+1\otimes l$.
\end{lem}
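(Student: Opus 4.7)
My strategy is induction on $m$ using the isotropy-decomposition method of Section~\ref{sec-coac}. Throughout, the companion identity $\rho(h) = 1 \otimes h$ (valid for $\dim Q \ge 2$) follows from the $\SO_n$-equivariance of $\mathcal O_Q(1)$, induced by the equivariance of $\mathcal O_{\mathbb P^{n-1}}(-1)$ as a subbundle of the standard representation.

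The base case $m = 1$ is a conic under $\SO_3 \simeq \PSL_2$. Identifying $\overline Q \simeq \mathbb P^1$, one has $l = [\pt]$ and $h \equiv 0 \pmod 2$, and the formula $\rho(l) = 1 \otimes l + e_1 \otimes 1$ is immediate from the fact that $\mathcal O_{\mathbb P^1}(1)$ is not $\PSL_2$-equivariant: the obstruction is precisely the nontrivial class $e_1 \in \Ch^1(\PSL_2) = \ff_2$.

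For the inductive step I set $C = \SO_{n-2}$, the semisimple part of the Levi of the maximal parabolic $P_1 \subset \SO_n$. By Lemma~\ref{chow-par} the kernel of the surjection $\Ch^*(\SO_n) \twoheadrightarrow \Ch^*(\SO_{n-2})$ is the ideal $(e_m)$. Take $E$ to be a generic $C$-torsor; then ${}_EQ$ is isotropic with Witt index $1$ and anisotropic kernel $Q'$, the quadric of a generic form of dimension $n - 2$. The decomposition
$$M({}_EQ) \simeq \zz \oplus \zz\{d\} \oplus M(Q')\{1\}$$
of \cite{CGM05}, combined with Theorem~\ref{comod-preserving}, shows that the $M(Q')\{1\}$-summand corresponds to the sub-$\Ch^*(\SO_{n-2})$-comodule $\bigoplus_{1 \le j \le d-1}\Ch^j(\overline Q)$ of $\Ch^*(\overline Q)$. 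Identifying it with $\Ch^*(\overline{Q'})\{1\}$ as $\Ch^*(\SO_{n-2})$-comodules (so that $l \leftrightarrow l_{Q'}$ and $h^{j+1} \leftrightarrow h_{Q'}^j$), the inductive hypothesis applied to $Q'$ over $\SO_{n-2}$ yields
$$\rho(l) \equiv 1 \otimes l + \sum_{i=1}^{m-1} e_i \otimes h^{m-i} \pmod{e_m}.$$

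By codimension and the counit axiom, the full coaction necessarily has the form $\rho(l) = 1 \otimes l + \sum_{i=1}^{m-1} e_i \otimes h^{m-i} + c \cdot e_m \otimes 1$ for some $c \in \ff_2$. To show $c = 1$, I would compute $\rho([\pt]) = \rho(l) \cdot (1 \otimes h)^{m-1}$ from this expression and compare it with the direct K\"unneth expansion of $[a^{-1}(\pt_0)]$, the graph of $g \mapsto g^{-1}\pt_0$ in $\SO_n \times Q$. The comparison reduces $c = 1$ to the identity $\mathrm{orb}^*(l) = e_m$ in $\Ch^*(\SO_n)$, where $\mathrm{orb}\colon \SO_n \to Q$ is the orbit map; this in turn follows from the Krishna-type presentation of $\Ch^*(\SO_n)$ as a quotient of $\Ch^*(\SO_n/B)$ modulo positive-degree characteristic classes from $\Ch^*(\B T)$. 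The hard part will be exactly this final step: the isotropy-decomposition method only informs $\rho(l) \pmod{e_m}$, so resolving the remaining $\ff_2$-ambiguity requires the external input about $\mathrm{orb}^*(l)$.
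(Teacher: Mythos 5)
Your strategy is essentially the paper's: induct on $m$ using the isotropy decomposition from Section~\ref{sec-coac} to pin down $\rho(l)$ modulo $(e_m)\otimes\Ch^*(\overline Q)$, observe by codimension that the only possible extra term is $c\,e_m\otimes 1$, and then determine $c$ from the relation between $\rho$ and the pullback $\pi=\mathrm{orb}^*\colon\Ch^*(\SO_n/P_1)\to\Ch^*(\SO_n)$ together with $\pi(l)=e_m$. The one place where your plan takes a less efficient route is the final step: the detour through $\rho([\pt])=\rho(l)\cdot(1\otimes h)^{m-1}$ is unnecessary, and in fact applying the counit $\id\otimes\eps$ to $\rho([\pt])$ kills all the terms (including $c\,e_m\otimes h^{m-1}$), so the comparison with the K\"unneth class of $a^{-1}(\pt_0)$ would not directly produce $c$. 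You do not need $[\pt]$ at all: simply apply $\id\otimes\eps$ to $\rho(l)$ itself to get $\mathrm{orb}^*(l)=c\,e_m$, and conclude $c=1$ from $\mathrm{orb}^*(l)=e_m$. The paper packages this slightly differently via the commutative square~\eqref{diagso} — apply $\id\otimes\pi$ to $\rho(l)$ (which kills the $e_i\otimes h^{m-i}$ terms since $h\in\Ker\pi$) and match against $\Delta(\pi(l))=\Delta(e_m)$; the counit axiom for $\Delta$ then forces $c=1$, with no need to know the primitivity of $e_m$ in advance. So the external input you correctly identify ($\mathrm{orb}^*(l)=e_m$, coming from the Krishna-type presentation) is exactly what the paper uses; just apply the comparison to $\rho(l)$ directly rather than to $\rho([\pt])$.
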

\begin{proof}
We use the method described in Section~\ref{sec-coac}.
We proceed by induction on $m$. The base $m=1$ is clear: $\SO_n/P_1$ is either the projective line or the product of two projective lines. If $m>1$ consider the parabolic subgroup $Q=P_1$. Let $E$ be a generic $C$-torsor, where $C$ is the commutator subgroup of the Levi subgroup of $P_1$. Then the Chow motive of ${}_E(\SO_n/P_1)$ decomposes as follows (see \cite[Proposition~2]{Ro98}):
$$
M({}_E(\SO_n/P_1))=\ff_2\oplus M({}_E(\SO_{n-2}/P'_1))\{1\}\oplus \ff_2\{n-2\},
$$
where the middle summand is a quadric of smaller dimension (in this summand $P'_1$ stands for the maximal parabolic subgroup of $\SO_{n-2}$ of type $1$). By the induction hypothesis we have
\begin{equation}\label{q1}
\rho(l)=\sum_{i=1}^{m-1} e_i\otimes h^{m-i}+1\otimes l\mod ( e_m)\otimes\Ch^*(\SO_n/P_1),
\end{equation}
where $(e_m)$ is the ideal of $\Ch^*(\SO_n)$ generated by $e_m$.

We have a commutative diagram
\begin{equation}\label{diagso}
\xymatrix{
\Ch^*(\SO_n/P_1)\ar[d]^-{\pi}\ar[r]^-\rho&\Ch^*(\SO_n)\otimes\Ch^*(\SO_n/P_1)\ar[d]^-{\id\otimes\pi}\\
\Ch^*(\SO_n)\ar^-{\Delta}[r]&\Ch^*(\SO_n)\otimes\Ch^*(\SO_n)
}
\end{equation}

The image of $l$ in $\Ch^*(\SO_n)$ is $e_m$, so
\begin{equation}\label{q2}
\rho(l)=e_m\otimes 1+1\otimes l\mod \Ch^*(\SO_n)\otimes \Ker\pi.
\end{equation}

Moreover, $\Ker\pi=(h)$. It remains to combine formulae~\eqref{q1} and \eqref{q2} together.
\end{proof}

\begin{dfn}\label{dfnvishik}
Following \cite[\S4]{Vi04} and \cite{Vi11} we set $\Lambda(Q)=\{0,\ldots,n-2\}$, if $n$ is odd and $\Lambda(Q)=\{0,\ldots,n-2\}\coprod\{m'\}$, if $n$ is even.

For a direct summand $N$ of the Chow motive $M(Q)$ we define $\Lambda(N)$ to be the subset of $\Lambda(Q)$ consisting of all $i$ such that $N$ over a splitting field of $Q$ contains as a direct summand the Tate motive $\ff_2\{i\}$. In the case $i=m$ and $n$ is even we say that $\Lambda(N)$ contains $m'$, if the realization of ${N}$ over a splitting field of $X$ contains $h^m$ and that $\Lambda(N)$ contains $m$, if the realization of ${N}$ over a splitting field contains $l$ or $l+h^m$.

Finally, we say that $M(Q)$ has a \emph{connection} $i$ and $j$, if for every indecomposable direct summand $N$ of $M(Q)$ such that $\Lambda(N)$ contains $i$, it also contains $j$. This is an equivalence relation that defines a partition of $\Lambda(Q)$.
\end{dfn}

The main result of \cite{Vi11} says that every anisotropic quadric has at least the same connections as an anisotropic excellent quadric of the same dimension. Now we state some new restrictions depending on the $J$-invariant $J(q)$, where $J(q)$ stands for Vishik's $J$-invariant. Note that if $G$ is a split special orthogonal group, then $\CH^*(G)\otimes\ff_2$ is isomorphic to the modulo two Chow ring of the respective maximal orthogonal Grassmannian. This identifies the generators $e_i$ with the generators $z_i$  (see Section~\ref{jinvold}).

\begin{thm}\label{conn-quad}
If $j\not\in J(q)$,
then the Chow motive of $Q$ has connections $m-j$ and $m$, $m-j+1$ and $m+1,\ldots, m-1$ and $m+j-1$, and, if $n$ is even, $m'$ and $m+j$.
\end{thm}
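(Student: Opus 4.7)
The plan is to derive all the stated connections from the coaction formula of Lemma~\ref{quad-eq} via the colinearity provided by Theorem~\ref{comod-preserving}. First I would set up a dictionary between $J(q)$ and the bialgebra $H^*$: iterating $e_i^2=e_{2i}$ from Lemma~\ref{l1} gives $e_j=e_{2i-1}^{2^l}$ for $j=2^l(2i-1)$, and by Remark~\ref{jinv-same} one has $H^*\simeq\ff_2[e_1,e_3,\ldots,e_{2r-1}]/(e_{2i-1}^{2^{j_i}})$. Hence $e_j\ne 0$ in $H^*$ iff $l<j_i$, i.e.\ $j\in J'$, which by the formula relating $J(q)$ and $J'$ is exactly $j\notin J(q)$.

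Next I would compute the coaction on $lh^k$. Since $\mathcal O(1)|_Q$ is the restriction of the $\SO_n$-equivariant $\mathcal O(1)$ on $\mathbb P^{n-1}$ and $\SO_n$ has no non-trivial characters, $\rho(h)=1\otimes h$. Combined with Lemma~\ref{quad-eq} and multiplicativity,
\[
\rho(lh^k)=\sum_{i=1}^{m}e_i\otimes h^{m-i+k}+1\otimes lh^k.
\]
Over $\overline F$ one has $h^s=0\pmod 2$ for $s>m$, while $h^m\ne 0$ precisely when $n$ is even (in which case $h^m$ represents the label $m'$). Hence for $j\notin J(q)$ with $1\le j\le m$ and for $k\in\{0,\ldots,j-1\}$, together with $k=j$ in the even case, the tensor $e_j\otimes h^{m-j+k}$ is a non-vanishing summand of $\rho(lh^k)$ in $H^*\otimes_{\ff_2}\Ch^*(\overline Q)$.

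Let $N$ be an indecomposable direct summand of $M(Q)\otimes\ff_2$ and let $\bar p_N$ be the realization of its projector. Theorem~\ref{comod-preserving} yields $(\id\otimes\bar p_N)\circ\rho=\rho\circ\bar p_N$. Fix $k$ as above and suppose $m+k\in\Lambda(N)$, i.e.\ $\bar p_N(lh^k)=lh^k$. The colinearity identity applied to $lh^k$ gives
\[
\sum_{i=1}^m e_i\otimes\bigl(\bar p_N(h^{m-i+k})-h^{m-i+k}\bigr)=0
\]
in $H^*\otimes_{\ff_2}\Ch^*(\overline Q)$. The $i$-th summand lies in the bigraded piece $H^i\otimes_{\ff_2}\Ch^{m-i+k}(\overline Q)$, and these bigraded pieces are pairwise disjoint; extracting the $(j,m-j+k)$-piece yields $e_j\otimes(\bar p_N(h^{m-j+k})-h^{m-j+k})=0$, and since $e_j\ne 0$ in $H^*$, $\bar p_N(h^{m-j+k})=h^{m-j+k}$. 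Thus $m-j+k\in\Lambda(N)$ when $k<j$, and $m'\in\Lambda(N)$ when $k=j$. Since the sets $\Lambda(N)$ partition $\Lambda(Q)$ as $N$ ranges over indecomposable summands, each such forward implication establishes the claimed connection.

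The principal obstacle will be the codimension-$m$ component in the even-dimensional case, where $\Ch^m(\overline Q)$ is two-dimensional and the labels $m,m'$ require a short case analysis of what ``$m\in\Lambda(N)$'' means for the restriction of $\bar p_N$ to $\Ch^m$. However, because $\rho(h^m)=1\otimes h^m$, the extra summand appearing in $\rho(\bar p_N(l))$ in the case $\bar p_N(l)=l+h^m$ lives entirely in bidegree $(0,m)$ and is invisible at the bidegree $(j,m-j)$ read off above; each admissible form of $\bar p_N(l)$ compatible with $m\in\Lambda(N)$ produces the same vanishing equation, and the argument proceeds uniformly.
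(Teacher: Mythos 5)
Your proof follows essentially the same route as the paper's: translate $j\notin J(q)$ into $e_j\ne 0$ in $H^*$, apply the colinearity from Theorem~\ref{comod-preserving} to the coaction formula of Lemma~\ref{quad-eq}, and extract the relevant bigraded component. The explicit bidegree bookkeeping and the dictionary between $J(q)$ and the vanishing of generators of $H^*$ are correct and usefully spelled out.

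There is, however, a genuine gap in the even-dimensional $k=0$ case (the connection $m-j$ and $m$). You identify the condition $m\in\Lambda(N)$ with the equation $\bar p_N(l)=l$, which is not equivalent to it when $n$ is even, and then assert that every admissible value of $\bar p_N(l)$ compatible with $m\in\Lambda(N)$ yields the same vanishing equation. One admissible configuration is missed: when $\imm(\bar p_N)\cap\Ch^m(\overline Q)$ is spanned by $l+h^m$ while the kernel of $\bar p_N$ on $\Ch^m(\overline Q)$ is spanned by $l$, one has $m\in\Lambda(N)$ yet $\bar p_N(l)=0$. Applying colinearity to $l$ then gives $\sum_i e_i\otimes\bar p_N(h^{m-i})=0$, whose $(j,m-j)$-piece yields $\bar p_N(h^{m-j})=0$, the opposite of what you need. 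The paper's phrasing avoids this: $\imm(\bar p_N)$ is a subcomodule, so one applies $\rho$ to whichever of $l$, $l+h^m$ the realization actually contains, and both satisfy $\rho(v)=\sum_i e_i\otimes h^{m-i}+1\otimes v$ because $\rho(h^m)=1\otimes h^m$. Subtracting $1\otimes v$ and extracting bidegree $(j,m-j)$ shows $h^{m-j}\in\imm(\bar p_N)$ uniformly, and incidentally rules out the configuration you omitted. This is the small repair your writeup needs.
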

\begin{proof}
The condition $j\not\in J(q)$ means that the generator $e_j$ of the orthogonal group is not zero in the bialgebra $H^*$ corresponding to $q$.

Consider an indecomposable summand $N$ of the motive $M(Q)$ whose realization
contains $l$ or $l+h^m$.
By Theorem~\ref{comod-preserving} the realization of $N$ is a subcomodule of $\CH^*(\overline Q)$ under the coaction of $H^*$, so Lemma~\ref{quad-eq} implies that the realization contains $h^{m-j}$. This means that there is a connection $m-j$ and $m$. The other connections can be obtained similarly considering the cycles $h^k l$, $k=1,\ldots, m$, and using the identity $\rho(h^k)=1\otimes h^k$ (see Lemma~\ref{comod-triv}).
\end{proof}

We illustrate the above theorem by several small-dimensional examples (cf. \cite[Section~7]{Vi04}). As before we denote by $q$ a quadratic form and by $Q$ the respective projective quadric. For simplicity we assume that $\Char F\ne 2$.

\begin{example}
Consider an anisotropic quadric $Q$ of dimension $6$ with trivial discriminant. If $1\in J(q)$, then by \cite[Proposition~4.2]{PS10} $Q$ has trivial Clifford invariant, hence is a Pfister quadric. Otherwise by Theorem~\ref{conn-quad} there are connections $2$ and $3$, $3'$ and $4$, and by \cite{Vi11} there are connections $0$ and $3$, $1$ and $4$, $2$ and $5$ and $3'$ and $6$. So there are at most two indecomposable summands shown in the following diagram:
$$
\xymatrix@R-1.5pc{
&&&{3}\ar@/_/@{-}[ld]\ar@/^/@{-}[rrd]\\
{0}\ar@/^2ex/@{-}[rr]&{1}\ar@/_/@{-}[rrd]&{2}&{}&{4}\ar@/^/@{-}[ld]\ar@/_2ex/@{-}[rr]&{5}&{6}\\
&&&{3'}
}
$$
If, moreover, $2\not\in J(q)$, the motive is indecomposable.
\end{example}

\begin{example}
Consider an anisotropic quadric $Q$ of dimension $8$ with trivial discriminant. It is known that $Q$ has non-trivial Clifford invariant, so $1\not\in J(q)$, and by Theorem~\ref{conn-quad} there are connections $3$ and $4$, $4'$ and $5$. On the other hand, by \cite{Vi11} there are connections $0$ and $7$, $1$ and $8$, $2$ and $5$, $3$ and $6$, $4$ and $4'$. So there are at most three indecomposable summands as follows:
$$
\xymatrix@R-1.5pc{
&&&&{4}\ar@{-}[dd]\ar@/_/@{-}[ld]\\
{0}\ar@/^6ex/@{-}[rrrrrrr]&{1}\ar@/_6ex/@{-}[rrrrrrr]&{2}\ar@/_2ex/@{-}[r]&{3}&{}&{5}\ar@/^2ex/@{-}[r]\ar@/^/@{-}[ld]&{6}&{7}&{8}\\
&&&&{4'}
}
$$
If, moreover, $3\not\in J(q)$, the motive is indecomposable.
\end{example}

\begin{example}
Consider an anisotropic quadric $Q$ of dimension $10$ with trivial discriminant and trivial Clifford invariant.

By \cite[Exercise~88.14]{EKM} or \cite[Corollary~4.2]{QSZ12} $1\in J(q)$, since the Clifford invariant of $q$ is trivial. Therefore, $2\in J(q)$ and $4\in J(q)$, since the respective generators $e_2$ and $e_4$ of $\CH^*(\SO_{12})\otimes\ff_2$ are powers of $e_1$.

If $3\in J(q)$, then by the action of the Steenrod algebra (\cite[Proposition~5.12]{Vi05}, \cite[Last column of Table~4.13]{PSZ08}) $5\in J(q)$ and hence $J(q)$ is trivial, a contradiction. 
So $3\not\in J(q)$, and by Theorem~\ref{conn-quad} there are connections $2$ and $5$, $3$ and $6$, $4$ and $7$, $5'$ and $8$. By \cite{Vi11} there are also connections $0$ and $7$, $1$ and $8$, $2$ and $9$, $3$ and $10$, $4$ and $5$, $5'$ and $6$. So there are at most two indecomposable summands, as shown in the diagram:
$$
\xymatrix@R-1.5pc{
&&&&&{5}\ar@/_/@{-}[ld]\ar@/^/@{-}[rrd]\\
{0}\ar@/^3ex/@{-}[rr]&{1}\ar@/_3ex/@{-}[rr]&{2}\ar@/^3ex/@{-}[rr]&{3}\ar@/_/@{-}[rrd]&{4}&{}&{6}\ar@/^/@{-}[ld]\ar@/_3ex/@{-}[rr]&{7}\ar@/^3ex/@{-}[rr]&{8}\ar@/_3ex/@{-}[rr]&{9}&{10}\\
&&&&&{5'}
}
$$
Actually by \cite[Theorem~4.13]{Vi04} there are exactly two (isomorphic up to a Tate twist), for it is known that the first Witt index of $q$ is $2$.
\end{example}

\section{Applications to Chow motives: exceptional varieties}\label{sec-except}

For a split group $G$ we denote by $P_i$ the maximal parabolic subgroup of type $i$. The enumeration of simple roots follows Bourbaki. We denote by $\rho$ the coaction of $\Ch^*(G)$ on $\Ch^*(G/P_i)$.

\begin{ntt}[Variety $\E_7/P_7$, $p=2$]

We denote by $\E_7^{sc}$ the split simple simply connected group of type $\E_7$.

\begin{lem}
The Chow ring of $\E_7^{sc}$ together with the action of Steenrod operations\footnote{For Steenrod operations in arbitrary characteristic see \cite{Pr19}.} is as follows:
\begin{align*}
&\CH^*(\E_7^{sc})\otimes\ff_2\simeq\ff_2[e_3,e_5,e_9]/(e_3^2,e_5^2,e_9^2);\\
&S^2(e_3)=e_5;\  S^4(e_5)=e_9;\\
&\codim e_i=i.
\end{align*}
\end{lem}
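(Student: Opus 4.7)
The plan is twofold: establish the ring structure by invoking a classical computation and verify the Steenrod action by a short Hopf-theoretic argument combined with a reduction to a subgroup of smaller rank.

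For the ring structure, the isomorphism $\CH^*(\E_7^{sc})\otimes\ff_2\simeq\ff_2[e_3,e_5,e_9]/(e_3^2,e_5^2,e_9^2)$ with $\codim e_i=i$ is contained in Kac's classification \cite[Table~II]{Kac85}, where the generators for the simply connected exceptional groups modulo $p=2$ are listed together with their codimensions. This fits the Borel structure theorem for commutative connected graded Hopf algebras over $\ff_p$ recalled in Subsection~\ref{strHopf}.

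For the Steenrod operations, the key observation is that the codimension-$5$ and codimension-$9$ components of $\Ch^*(\E_7^{sc})$ are each one-dimensional, spanned by $e_5$ and $e_9$ respectively: the relations $e_i^2=0$ leave no room for non-trivial decomposable monomials of codimension $5$ or $9$ (in codimension $9$ one uses that there is no element of codimension $6$ to pair with $e_3$, since $e_3^2=0$). Since $S^2$ raises codimension by $2$ and $S^4$ by $4$, this already forces $S^2(e_3)\in\{0,e_5\}$ and $S^4(e_5)\in\{0,e_9\}$.

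To rule out vanishing I would reduce to a subgroup of smaller rank. Removing the first node of the Dynkin diagram of $\E_7$ yields a subdiagram of type $\D_6$, and the corresponding inclusion $\Spin_{12}\hookrightarrow\E_7^{sc}$ (into the commutator of a Levi subgroup) induces a Hopf algebra homomorphism $\Ch^*(\E_7^{sc})\to\Ch^*(\Spin_{12})$. Using the characteristic map $\Ch^*(\B T_{\E_7})\to\Ch^*(\E_7^{sc})$ together with the analogous surjection for $\Spin_{12}$ and the inclusion $T_{\Spin_{12}}\hookrightarrow T_{\E_7}$, one checks that $e_3$ and $e_5$ restrict to non-zero elements of the same name in $\Ch^*(\Spin_{12})$; the classical formulae $S^2(e_3)=e_5$ and $S^4(e_5)=e_9$ on $\Ch^*(\Spin_{12})$, which follow from the action of Steenrod squares on the corresponding Stiefel--Whitney classes, then transport back to $\E_7^{sc}$. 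The main obstacle I anticipate is to verify this last compatibility cleanly via the characteristic map; an alternative is to cite the explicit Steenrod computation of Kono--Mimura \cite{KM77}, which yields the formulae directly.
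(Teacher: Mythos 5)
Your ring-structure argument is identical to the paper's (citing Kac's Table~II), and your observation that the codimension-$5$ and codimension-$9$ graded pieces of $\Ch^*(\E_7^{sc})$ are spanned by $e_5$ and $e_9$ respectively is correct and does reduce the verification to ruling out vanishing. The paper itself simply cites \cite[Proposition~5.1]{IKT76} for the Steenrod action, which is essentially your fallback route (your citation of \cite{KM77} would serve equally well).

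However, your ``main'' argument via restriction to $\Spin_{12}$ has a genuine gap for the claim $S^4(e_5)=e_9$. By the paper's own Lemma~\ref{E7P7mod2} (or directly from Kac), $\Ch^*(\Spin_{12})\simeq\ff_2[e_3,e_5]/(e_3^2,e_5^2)$ — there is no element of codimension $9$. The restriction map $\Ch^*(\E_7^{sc})\to\Ch^*(\Spin_{12})$ therefore annihilates $e_9$, and $S^4(e_5)=0$ holds in $\Ch^*(\Spin_{12})$ by degree reasons. Thus the restriction is compatible with \emph{both} $S^4(e_5)=e_9$ and $S^4(e_5)=0$ in $\E_7^{sc}$ and cannot decide between them. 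You would need either a larger subgroup whose cohomology retains a codimension-$9$ class, or — as you note and as the paper does — an explicit citation to the topological literature. (For $S^2(e_3)=e_5$ the reduction to $\Spin_{12}$ is at least plausible, but you would still need to justify that $S^2(e_3)=e_5$ in $\Ch^*(\Spin_{12})$ and that the restriction sends $e_5\mapsto e_5\neq 0$; neither is entirely immediate.) In short, state the argument as a citation to \cite{IKT76} or \cite{KM77} rather than attempting the reduction.
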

\begin{proof}
The description of the Chow ring follows from \cite[Table~II]{Kac85}
and the Steenrod algebra action is described in \cite[Proposition~5.1]{IKT76}. Note that \cite{IKT76} deals with the singular cohomology instead of Chow rings. But the multiplicative structure and the action of the Steenrod operations on $\Ch^*(X)$ and on $\Ch^*(G)$, where $X$ denotes the variety of Borel subgroups of $\E_7^{sc}$,  can be
described in purely combinatorial terms (see \cite{DuZ07} and \cite[Section~5]{GPS16}) and, hence, does not depend on the choice of the base field $F$.

The Chow ring  $\CH^*(\E_7^{sc})\otimes\ff_2$ can be identified with the image of the natural
pullback homomorphism $H^{2*}_{\sing}(X,\ff_2) \to H^{2*}_{\sing}(\E_7^{sc}, \ff_2)$.
This image and formulae describing the action of Steenrod operations are described in \cite{IKT76} and \cite{MT78} and imply the formulae in the statement of the present lemma.
\end{proof}
 
\begin{lem}\label{E7P7mod2} $\CH^*(\E_7^{sc}/P_7)\otimes\ff_2\simeq\ff_2[h,x_5,x_9]/(h^{14},x_5^2,x_9^2)$ with $\codim h=1$ and $\codim x_i=i$,
and the coaction is given by
\begin{align}
&\rho(h)=1\otimes h;\label{ff1}\\
&\rho(x_5)=e_5\otimes 1+e_3\otimes h^2+1\otimes x_5;\label{ff2}\\
&\rho(x_9)=e_9\otimes 1+e_5\otimes h^4+1\otimes x_9.\label{ff3}
\end{align}
\end{lem}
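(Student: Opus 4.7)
The plan is to establish the ring structure and then derive the three coaction formulas by combining the pullback along $\E_7^{sc}\to \E_7^{sc}/P_7$ with the inductive method of Section~\ref{sec-coac}.

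The ring presentation is extracted from the Bruhat basis of $\E_7^{sc}/P_7$: its Poincar\'e polynomial $(1+t+\cdots+t^{13})(1+t^5)(1+t^9)$ has the correct total rank $56=|W(\E_7)|/|W(\E_6)|$. Take $h$ to be the Picard generator and choose $x_5, x_9$ to be lifts along the pullback $\pi\colon\Ch^*(\E_7^{sc}/P_7)\to\Ch^*(\E_7^{sc})$ of $e_5, e_9$; the relations $h^{14}=x_5^2=x_9^2=0$ follow by rank count, and the presentation makes $\ker\pi=(h)$ manifest. Formula~\eqref{ff1} is immediate: since $\E_7^{sc}$ is simply connected, $\Ch^1(\E_7^{sc})=0$, and counitality of $\rho$ forces $\rho(h)=1\otimes h$.

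For \eqref{ff2} and \eqref{ff3} I would use the commutative square
\begin{equation*}
\xymatrix{
\Ch^*(\E_7^{sc}/P_7) \ar[d]_{\pi} \ar[r]^-{\rho} & \Ch^*(\E_7^{sc}) \otimes \Ch^*(\E_7^{sc}/P_7) \ar[d]^-{\id \otimes \pi} \\
\Ch^*(\E_7^{sc}) \ar[r]^-{\Delta} & \Ch^*(\E_7^{sc}) \otimes \Ch^*(\E_7^{sc})
}
\end{equation*}
expressing the compatibility of the action $G\times G/P\to G/P$ with the multiplication $G\times G\to G$. The right vertical map has kernel $\Ch^*(\E_7^{sc})\otimes (h)$. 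A preliminary computation of $\Delta$ via Section~\ref{sec-coac} applied iteratively (with $Q=P_7$ and $Q=P_1$, whose Levi commutators have types $\E_6$ and $\mathrm{D}_6$) shows that $e_3, e_5, e_9$ are all primitive in the Hopf algebra $\Ch^*(\E_7^{sc})$: mixed terms are ruled out because $\Ch^i(\E_7^{sc})=0$ for $i\in\{1,2,4,6,7,10,11\}$, so no pair $(a,b)$ with $a+b\in\{3,5,9\}$ has both $\Ch^a$ and $\Ch^b$ nonzero. Substituting this into the square, using counitality, and then applying coassociativity $(\id\otimes\rho)\circ\rho=(\Delta\otimes\id)\circ\rho$ to $x_9$ together with the ansatz for $\rho(x_5)$ narrows the expressions down to $\rho(x_5)=e_5\otimes 1+\beta e_3\otimes h^2+1\otimes x_5$ and $\rho(x_9)=e_9\otimes 1+\gamma e_5\otimes h^4+b_1 e_3\otimes h^6+1\otimes x_9$ for unknown $\beta,\gamma, b_1 \in \ff_2$.

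The main obstacle is to pin down these scalars, since they depend on the choice of lifts $x_5, x_9$ (replacing $x_9$ by $x_9+\lambda_1 h^9+\lambda_2 h^4 x_5$ shifts $\gamma$ and $b_1$). The strategy is to apply Section~\ref{sec-coac} directly to $G=\E_7^{sc}$ with $Q=P_7$: for a generic $\E_6^{sc}$-torsor $E'$, the Chow motive of ${}_{E'}(\E_7^{sc}/P_7)$ decomposes, by Brosnan and Chernousov--Gille--Merkurjev via the three-orbit $\E_6$-stratification of the Freudenthal variety (a point, an $\mathbb{A}^1$-fibration over the Cayley plane $\E_6/P_1$, and the open $27$-dimensional orbit), into Tate twists of motives of projective $\E_6$-homogeneous varieties. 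By Theorem~\ref{comod-preserving} this decomposition is $\Ch^*(\E_6^{sc})$-coaction compatible; comparing the induced coaction against the known coaction on $\Ch^*(\E_6/P_1)$ (computed inductively one rank down by the same method) both selects canonical lifts $x_5, x_9$ and fixes $\beta=\gamma=1$, $b_1=0$, yielding the claimed formulas.
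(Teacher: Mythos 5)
Your setup is on the right track: the commutative square relating $\rho$ and $\Delta$ modulo $\ker\pi=(h)$, the counitality argument for the $1\otimes x_i$ terms, and the ansatz method are all sound, and the argument for \eqref{ff1} via $\Ch^1(\E_7^{sc})=0$ works (the paper instead invokes Lemma~\ref{comod-triv} and rationality of $h$, but both are fine). The coassociativity computation that eliminates terms of the form $e_3e_5\otimes h$ and $e_3\otimes hx_5$ is also correct.

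However, there is a genuine gap in the step that is supposed to pin down the scalar $\gamma$ in $\rho(x_9)=e_9\otimes 1+\gamma e_5\otimes h^4+b_1 e_3\otimes h^6+1\otimes x_9$. You propose applying Section~\ref{sec-coac} with $Q=P_7$, whose Levi commutator $C$ is $\E_6^{sc}$. But $\Ch^*(\E_6^{sc})\otimes\ff_2\simeq\ff_2[e_3]/(e_3^2)$, so the surjection $\Ch^*(\E_7^{sc})\to\Ch^*(C)$ kills both $e_5$ and $e_9$. The method of Section~\ref{sec-coac} only computes the coaction modulo $\ker(\Ch^*(G)\to\Ch^*(C))$, and the term $\gamma e_5\otimes h^4$ lies entirely inside this kernel. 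Hence the $\E_6$-reduction cannot see $\gamma$, no matter how the lifts $x_5,x_9$ are normalized; the identification with the coaction on the Cayley plane $\E_6^{sc}/P_1$ would fix $\beta$ and $b_1$ but leaves $\gamma$ completely undetermined. (Coassociativity, as you note, also does not constrain $\gamma$.)

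The paper sidesteps this in two ways that you are missing. First, it uses the \emph{other} natural reduction, $Q=P_1$, whose Levi commutator is $\Spin_{12}$ of type $\D_6$ with $\Ch^*\otimes\ff_2\simeq\ff_2[e_3,e_5]/(e_3^2,e_5^2)$; since $e_5$ survives there, the match with the quadric coaction of Lemma~\ref{quad-eq} pins down \eqref{ff2} completely. Second, and crucially, the paper does not attempt to derive \eqref{ff3} by a parabolic reduction at all: after normalizing $x_9$ so that $S^4(x_5)=x_9$, it applies the Steenrod operation $S^4$ to \eqref{ff2}, using the compatibility of $\rho$ with Steenrod operations and the Cartan formula ($S^4(e_5\otimes 1)=e_9\otimes 1$, $S^4(e_3\otimes h^2)=S^2(e_3)\otimes S^2(h^2)=e_5\otimes h^4$, $S^4(1\otimes x_5)=1\otimes x_9$). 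This is what produces the $e_5\otimes h^4$ term with the correct coefficient. Without Steenrod operations (or a reduction through a subgroup where $e_5$ survives, such as $\D_6$ or $\D_7$), the coefficient $\gamma$ cannot be extracted by your method.
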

\begin{proof}
The multiplicative structure is described in \cite[Theorem~5]{Du10}. (Alternatively one can compute the multiplicative structure using the algorithm described in \cite[Section~2]{PS10} or using the equivariant algorithm described in \cite[Section~5]{GPS16}).

By Lemma~\ref{chow-par} (applied to the case when $G=\E_7^{sc}$, $Q=P_7$ and $C=\E_6^{sc}$) the generator $x_5$ goes to $e_5$ and $x_9$ goes to $e_9$ under the natural map $\Ch^*(\E_7^{sc}/P_7)\to\Ch^*(\E_7^{sc})$.
Indeed, by \cite[Table~II]{Kac85} $\Ch^*(\E_6^{sc})=\ff_2[e_3]/(e_3^2)$ and, thus, the generators $e_5$ and $e_9$ in $\Ch^*(\E_7^{sc})$ map to $0$ under the natural homomorphism $\Ch^*(\E_7^{sc})\to \Ch^*(\E_6^{sc})$ and, therefore, come from $\Ch^*(\E_7^{sc}/P_7)$ by Lemma~\ref{chow-par}. But since $h\in\Ch^*(\E_7^{sc}/P_7)$ goes to $0$ in $\Ch^*(\E_7^{sc})$, by dimensional reasons $e_5$ and $e_9$ must come from $x_5$ and $x_9$ respectively.

Note that we can adjust $x_9$ by adding $x_5h^4$ if necessary so that $S^4(x_5)=x_9$.

Formula~\eqref{ff1} follows from the fact that $h$ is a rational cycle and from Lemma~\ref{comod-triv} applied to a generic $\E_7^{sc}$-torsor.

To prove formula~\eqref{ff2} we use the method described in Section~\ref{sec-coac}. Consider the parabolic subgroup $Q=P_1$ of our split group of type $\E_7$. Then the commutator subgroup $C$ of the Levi part of $Q$ is the group $\Spin_{12}$ of type $\D_6$, and by \cite[Table~II]{Kac85}
$$
\Ch^*(C)=\ff_2[e_3,e_5]/(e_3^2,e_5^2).
$$
Let $E$ be a generic $C$-torsor over $F$. The Chow motive of ${}_E(\E_7^{sc}/P_7)$ decomposes by \cite{Br05} as follows:
\begin{align}\label{dece7p7}
M({}_E(\E_7^{sc}/P_7))&=
M({}_E(\D_6/P'_1))\oplus M({}_E(\D_6/P'_1))\{17\}\oplus M({}_E(\D_6/P'_6))\{6\},
\end{align}
where $P'_i$'s on the right-hand side denote the respective maximal parabolic subgroups for the split group of type $\D_6$ (the enumeration of simple roots follows Bourbaki).

By Lemma~\ref{quad-eq} there is an element $y_5\in\Ch^5(\D_6/P'_1)$ such that
$$
\rho(y_5)=e_5\otimes 1+e_3\otimes h^2+1\otimes y_5,
$$
and under decomposition~\eqref{dece7p7} $y_5$ corresponds to either $x_5$ or $x_5+h^5$. Note that the homomorphism $\Ch^*(\E_7^{sc})\to\Ch^*(C)$ is an isomorphism in codimensions $\le 5$. Therefore, in both cases formula~\eqref{ff2} holds.

Finally, formula~\eqref{ff3} follows from formula~\eqref{ff2} by applying the Steenrod ope\-ra\-tion~$S^4$.
\end{proof}

The following proposition was previously obtained by Alexander Henke using classical methods.

\begin{prop}\label{dec-e7}
Let $E$ be an $\E_7^{sc}$-torsor over $\Spec F$ with $J_2(E)=(1,1,1)$. Then the Chow motive $M(E/P_7)$ with coefficients $\ff_2$ decomposes as $U(E/P_7)\oplus U(E/P_7)\{1\}$, where the motive $U(E/P_7)$ is indecomposable.
\end{prop}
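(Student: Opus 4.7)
The plan is to use the Hopf-theoretic framework of the paper: decompose $\Ch^*(\E_7^{sc}/P_7)$ as an $H^*$-comodule into two Tate-shifted summands, lift this splitting to a motivic decomposition, and then verify indecomposability.

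First, the hypothesis $J_2(E)=(1,1,1)$ forces the bi-ideal $J$ of Lemma~\ref{bi-ideal} to vanish, so $H^*=\Lambda(e_3,e_5,e_9)$. Lemma~\ref{E7P7mod2} gives the $H^*$-coaction on $\Ch^*(\E_7^{sc}/P_7)=\ff_2[h,x_5,x_9]/(h^{14},x_5^2,x_9^2)$, and crucially $\rho(h)=1\otimes h$. I would define $N$ as the $\ff_2$-span of the monomials $h^{2k}x_5^bx_9^c$ with $0\le k\le 6$ and $b,c\in\{0,1\}$. Since the second-tensor factors of $\rho(x_5)$ lie in $\{1,h^2,x_5\}\subset N$ and those of $\rho(x_9)$ in $\{1,h^4,x_9\}\subset N$, and $\rho$ is a ring homomorphism, $N$ is an $H^*$-subcomodule; so is its complement $N':=hN$, and multiplication by the co-fixed class $h$ is an $H^*$-equivariant isomorphism $N\cong N\{1\}$ of graded comodules.

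Second, I would apply Theorem~\ref{RpE} with $E_J=e_3e_5e_9$ to rule out any Tate twist of $R_2(E)$ in $M(E/P_7)$: since $\rho(x_5)$ and $\rho(x_9)$ each have first tensor factors involving at most one of the $e_i$'s, the first tensor factor of $\rho(h^ax_5^bx_9^c)$ (with $b,c\in\{0,1\}$) involves at most two $e_i$'s, so the coefficient at $e_3e_5e_9$ vanishes. The comodule decomposition $\Ch^*(\E_7^{sc}/P_7)=N\oplus N\{1\}$ must then be lifted to a motivic decomposition $M(E/P_7)=U\oplus U\{1\}$ by constructing an explicit rational idempotent on $E/P_7\times E/P_7$ whose restriction to $\Ch^*(\overline{E/P_7})$ is the projection onto $N$: by Theorem~\ref{comod-preserving} rational correspondences act as $H^*$-comodule maps, and Rost nilpotence (Subsection~\ref{rostnilp}) then produces the motivic idempotent.

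Third, to prove $U$ indecomposable, I would show $\End_{H^\vee}^{\mathrm{gr}}(N)=\ff_2\cdot\mathrm{id}$. For a graded $H^\vee$-equivariant map $f\colon N\to N$ with $f(1)=c\cdot 1$: since $N^5=\ff_2\cdot x_5$, the equivariance $f(e_5^*x_5)=e_5^*f(x_5)$ forces $f(x_5)=c\cdot x_5$; then $e_3^*$-equivariance at $x_5$ forces $f(h^2)=c\cdot h^2$; in $N^9=\ff_2\langle x_9,h^4x_5\rangle$ one writes $f(x_9)=ax_9+bh^4x_5$, and $e_9^*$-equivariance forces $a=c$ while $e_3^*x_9=0$ together with $e_3^*(h^4x_5)=h^6\ne 0$ forces $b=0$. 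Iterating on the remaining basis monomials (each graded piece of $N$ has dimension at most $2$, and the equivariance constraints resolve all free parameters) gives $f=c\cdot\mathrm{id}$. Consequently $\End_{\mathrm{mot}}(U)\hookrightarrow\End_{H^\vee}^{\mathrm{gr}}(N)=\ff_2$, so $U$ is indecomposable. The main obstacle lies in the second step: explicitly producing the rational idempotent, which requires controlling the subring $R^*_{P_7}$ of rational cycles on $\Ch^*(\E_7^{sc}/P_7)$ (note that $h$ itself is not rational for generic $E$, since the Tits algebra associated with $\omega_7$ is non-trivial).
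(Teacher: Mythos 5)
Your step ruling out Tate twists of $R_2(E)$ via Theorem~\ref{RpE} and Lemma~\ref{E7P7mod2} is essentially the same computation as in the paper, and your observation that $N=\langle h^{2k}x_5^bx_9^c\rangle$ is an $H^*$-subcomodule with $hN$ as a complementary isomorphic shifted copy is a correct and clean description of the expected split over $\overline{F}$. The indecomposability via $\End^{\mathrm{gr}}_{H^\vee}(N)=\ff_2$ also checks out, though it is actually redundant: once one knows the summands are Tate twists of the upper motive $U(E/P_7)$ (a consequence of Karpenko's theorem \cite[Theorem~3.5]{Ka13} plus the elimination of $R_2(E)$), indecomposability of $U(E/P_7)$ holds by definition.

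The genuine gap is exactly the one you flag: passing from the $H^*$-comodule splitting of $\Ch^*(\overline{E/P_7})$ to an actual motivic splitting. Theorem~\ref{comod-preserving} only says rational correspondences induce comodule maps; it gives no surjectivity, and Theorem~\ref{mainthm} (which does give the equivalence between motivic decompositions and $H^\vee$-module decompositions) is stated for $E/B$ only, not for arbitrary $E/P$. Your remark that $h$ is not rational because the Tits algebra attached to $\omega_7$ is nontrivial correctly diagnoses why there is no cheap idempotent here. The paper closes this gap by a different route: it invokes the shell technique of \cite[Section~4]{GPS16}. Concretely, it computes $J_2(E_{F(E/P_7)})=(1,0,0)$, uses Lemma~\ref{E7P7mod2} to pin down the cycles that are coinvariant over $F(E/P_7)$, compares them against the eight Tate motives occurring in $M(E/P_7)$ over $F(E/P_7)$ (from the isotropic decomposition of \cite{CGM05}), rules out $h^9$ from the first shell by a Poincar\'e-duality argument, and then applies \cite[Theorem~4.10]{GPS16} with $b=1$, $\alpha=h$ to conclude $M(E/P_7)\simeq U(E/P_7)\oplus U(E/P_7)\{1\}$. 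Without some substitute for that input (a shell computation or an explicit rational projector), your proposal does not yet establish the decomposition.
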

\begin{proof}
By \cite[Theorem~3.5]{Ka13} the only possible summands of $M(E/P_7)$ up to Tate twists are upper motives of the form $U(E/P)$ for parabolic subgroups $P$ such that the Tits index of $(_E\E_7^{sc})_{F(E/P)}$ contains the Tits index of $({_E\E_7^{sc}})_{F(E/P_7)}$. Analyzing the Tits indices for groups of type $\E_7$ at the prime $2$ (see \cite[p.~59]{Ti66}) one can see that there are at most two candidates for possible summands of $M(E/P_7)$ up to Tate twist, namely $U(E/P_7)$ and the motive $R_2(E)$ (see Section~\ref{jinvold}).

However, using Theorem~\ref{RpE} and Lemma~\ref{E7P7mod2} we see that there are no summands of type $R_2(E)$.

We are going to apply the shell technique from \cite[Section~4]{GPS16}. To this end we need to compute the first shell. We have already established that $E/P_7$ is not generically split (see \cite[Theorem~5.7]{PS10}), so passing to its function field we get that the semisimple anisotropic kernel of $_E\E_7^{sc}$ over $F(E/P_7)$ is of type $\D_4$ or $\E_6$ and, therefore, since by \cite[Corollary~5.19]{PSZ08} the $J$-invariants of a group and of its semisimple anisotropic kernel are equivalent, $J_2(E_{F(E/P_7)})=(1,0,0)$.

Using Lemma~\ref{E7P7mod2} we see that the only cycles over $F(E/P_7)$ satisfying the relation $\rho(x)=1\otimes x$ are spanned by $h^i$, $x_9h^i$, $x_5h^{12}$, $x_5h^{13}$, $x_5x_9h^{12}$, $x_5x_9h^{13}$. On the other hand, it follows from the algorithm of \cite{CGM05} that the motive of $E/P_7$ contains over $F(E/P_7)$ exactly eight Tate motives, namely
$$
\ff_2,\,\ff_2\{1\},\,\ff_2\{9\},\,\ff_2\{10\},\,\ff_2\{17\}, \ff_2\{18\},\,\ff_2\{26\},\,\ff_2\{27\}.
$$
By the same arguments the only rational over $F$ cycles in $\Ch^9(\E_7^{sc}/P_7)$ are spanned by $h^9$, and this cycle is not Poincar\'e dual (we say that two cycles $\alpha$ and $\beta$ are Poincar\'e dual, if $\deg (\alpha\beta)=1$) to any of rational over $F(E/P_7)$ cycles in the dual codimension $18$, namely $0$, $x_9h^9$, $x_5h^{13}$ or $x_9h^9+x_5h^{13}$. Therefore, $h^9$ does not belong to the first shell, i.e., the Tate motive $\ff_2\{9\}$ from the list above is not a generic point of a direct summand of $M(E/P_7)$ over $F$.

Similarly, there are no rational over $F$ cycles in  $\Ch^{17}(\E_7^{sc}/P_7)$ and $\Ch^{26}(\E_7^{sc}/P_7)$ when $J_2(E)=(1,1,1)$. Therefore, the Tate motives $\ff_2\{17\}$ and $\ff_2\{26\}$ are also not generic points of a direct summand of $M(E/P_7)$ over $F$.

On the other hand, the cycle $h$, which is rational over $F$, belongs to the first shell.
It remains to apply \cite[Theorem~4.10]{GPS16} with $b=1$ and $\alpha=h$.
\end{proof}
\end{ntt}

\begin{ntt}[Variety $\E_8/P_8$, $p=3$]
We denote by $\E_8$ the split group of the respective type.

\begin{lem}
$\CH^*(\E_8)\otimes\ff_3\simeq\ff_3[e_4,e_{10}]/(e_4^3,e_{10}^3)$.
\end{lem}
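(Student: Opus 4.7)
The plan is to cite Kac's classification just as in the preceding lemma on $\CH^*(\E_7^{sc})$. By Borel's structure theorem (see Subsection~\ref{strHopf}) together with the identification of the mod-$p$ Chow ring of a split semisimple group over a field of characteristic $\ne p$ with the mod-$p$ singular cohomology ring of the corresponding compact Lie group, the mod-$3$ Chow ring of $\E_8$ is a truncated polynomial algebra of the form $\ff_3[y_1,\dots,y_s]/(y_i^{3^{k_i}})$ on homogeneous generators.

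The specific data needed — namely the number of generators, their codimensions, and the truncation exponents — for the pair $(\E_8,p=3)$ are read directly from \cite[Table~II]{Kac85}: there are two generators, of codimensions $4$ and $10$, each with truncation exponent $3$. This gives exactly the claimed presentation $\ff_3[e_4,e_{10}]/(e_4^3,e_{10}^3)$.

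There is essentially no obstacle here: the lemma is a table lookup, strictly parallel in structure to the earlier statement for $\CH^*(\E_7^{sc})\otimes\ff_2$. The only points one has to be careful about are to confirm that the characteristic of $F$ does not interfere (the theory $\Ch^*$ is the one case where the authors allow arbitrary characteristic of $F$, and Kac's computations transfer via the algorithm described in Subsection~\ref{sec31} and \cite{Kac85}), and to fix the generators $e_4$ and $e_{10}$ as specific homogeneous primitive elements in the bialgebra $\Ch^*(\E_8)$ for use in the subsequent lemmas on the coaction on $\Ch^*(\E_8/P_8)$.
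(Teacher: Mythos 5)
Your proposal is correct and follows the same route as the paper: the paper's proof is exactly the citation to \cite[Table~II]{Kac85}, identifying two generators of codimensions $4$ and $10$ with truncation exponent $3$ each at the prime $3$. The surrounding discussion of Borel's structure theorem and characteristic independence is accurate context but not needed beyond the table lookup.
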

\begin{proof}
Follows from \cite[Table~II]{Kac85}.
\end{proof}

\begin{lem}\label{E8mod3}
$$\CH^*(\E_8/P_8)\otimes\ff_3\simeq\ff_3[h,x_6,x_{10}]/(h^{20}-x_6^3h^2, x_6^4+h^{24}-x_6h^{18},x_{10}^3-x_6h^{24}-x_6^2h^{18}+x_6^5)$$ with $\codim h=1$ and $\codim x_i=i$,
and the coaction is given by
\begin{align}
&\rho(h)=1\otimes h;\label{g1}\\
&\rho(x_6)=e_4\otimes h^2+1\otimes x_6;\label{g2}\\
&\rho(x_{10})=e_{10}\otimes 1+e_4^2\otimes h^2-e_4\otimes x_6+1\otimes x_{10}.\label{g3}
\end{align}
\end{lem}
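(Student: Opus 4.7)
The plan is to mirror the strategy of Lemma~\ref{E7P7mod2}. First, I would establish the multiplicative structure of $\Ch^*(\E_8/P_8)$ either by invoking the computation of Duan or by running the equivariant algorithm of \cite[Section~5]{GPS16}; with that presentation fixed, one picks $h$ to be the hyperplane class coming from the character of $P_8$ and chooses specific lifts $x_6 \in \Ch^6$, $x_{10}\in\Ch^{10}$ satisfying the stated relations. The images under the surjection $\pi\colon\Ch^*(\E_8/P_8)\to\Ch^*(\E_8)$ are pinned down immediately: $\pi(h)=0$ since $h$ lies in the image of the characteristic map, $\pi(x_6)=0$ because $\Ch^6(\E_8)\otimes\ff_3=0$, and (after adjusting by a multiple of $x_6h^4$ if needed) we may arrange $\pi(x_{10})=e_{10}$. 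Finally one can also normalize $x_{10}$ so that $S^2(x_6)=x_{10}$ up to lower-order correction, where $S^2$ is the $p=3$ reduced power raising codimension by $4$.

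Next I would compute the three coactions. Formula~\eqref{g1} is immediate from Lemma~\ref{comod-triv}, since $h$ is rational. For~\eqref{g2} the plan is to use the commutative diagram analogous to \eqref{diagso}: since $\pi(x_6)=0$, we have $(\id\otimes\pi)\rho(x_6)=\Delta(0)=0$, so
\[
\rho(x_6)=1\otimes x_6+\sum a_i\otimes b_i\quad\text{with } b_i\in\Ker\pi=(h).
\]
To identify the coefficients one applies the isotropic method of Section~\ref{sec-coac}. The natural choice is $Q=P_8$, so that $C=\E_7^{sc}$ and $\Ch^*(C)\otimes\ff_3$ is nontrivial; for a generic $\E_7^{sc}$-torsor $E$ the motive of ${}_E(\E_8/P_8)$ decomposes by \cite{Br05} into Tate twists of motives of $\E_7^{sc}$-homogeneous varieties whose coactions are known (or can be computed inductively exactly as in Lemma~\ref{E7P7mod2}). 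Reading off the component of degree $4$ on the $\Ch^*(\E_8)$-side in the resulting expression, together with the degree constraints and the fact that $\rho$ is a ring homomorphism, pins down the term $e_4\otimes h^2$ and forces all other contributions to vanish.

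For~\eqref{g3}, the cleanest approach is to apply $S^2$ to both sides of~\eqref{g2}, using that $\rho$ commutes with Steenrod operations (as $\rho$ is built from pullbacks). The Cartan formula gives
\[
S^2(e_4\otimes h^2)=\sum_{i+j=2} S^i(e_4)\otimes S^j(h^2),
\]
and the total Steenrod power satisfies $S(h)=h+h^3$, whence $S^0(h^2)=h^2$, $S^1(h^2)=-h^4$, $S^2(h^2)=h^6$; combined with the values of $S^i(e_4)$ on $\Ch^*(\E_8)\otimes\ff_3$ (in particular $S^2(e_4)$ is proportional to $e_4^2$, and a higher operation produces $e_{10}$ from $e_4$), one assembles an expression for $\rho(x_{10})$ up to correction terms. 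The final normalization is fixed by the constraint $(\id\otimes\pi)\rho(x_{10})=\Delta(e_{10})=e_{10}\otimes 1+1\otimes e_{10}$ coming from~\eqref{diagso}.

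The main obstacle I expect is step~\eqref{g2}: one must identify the correct $\E_7^{sc}$-orbit decomposition of $\E_8/P_8$ and match the known generators of $\Ch^*({}_E(\E_7/P))$-summands with the lift $x_6$, which is more delicate than the $\D_6$-quadric computation in Lemma~\ref{E7P7mod2}. The Steenrod manipulation in~\eqref{g3} is then purely formal once~\eqref{g2} is in hand, though care is needed with the precise values of $S^i(e_4)$ in $\Ch^*(\E_8)\otimes\ff_3$.
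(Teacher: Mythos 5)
Your outline for \eqref{g1} and \eqref{g2} is essentially sound, though you pick $Q=P_8$ (so $C=\E_7^{sc}$) where the paper uses the parabolic of type $\{1,\dots,6\}$ (so $C=\E_6^{sc}$); both give $\Ch^*(C)\otimes\ff_3\simeq\ff_3[e_4]/(e_4^3)$, but the paper's choice lets one cite explicitly the $\E_6^{sc}$-homogeneous pieces appearing in Brosnan's decomposition and apply Lemma~\ref{chow-par} directly, whereas the $\E_7^{sc}$ route would require separately establishing the corresponding $\E_7$-coactions at $p=3$ (Lemma~\ref{E7P7mod2} is a $p=2$ statement, so it cannot be invoked).

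The Steenrod approach to \eqref{g3}, however, contains a genuine gap. The normalization ``$S^2(x_6)=x_{10}$ up to lower order'' is impossible: $\pi$ commutes with Steenrod operations, and since $\pi(x_6)=0$ we get $\pi(S^2(x_6))=0$, whereas $\pi(x_{10})=e_{10}\neq 0$. So $S^2(x_6)$ lies in $\Ker\pi$ and cannot contain an $x_{10}$-component. Relatedly, the term $e_{10}\otimes 1$ in \eqref{g3} cannot be produced by applying a codimension-$4$-raising operation to \eqref{g2}: in the Cartan expansion of $S^2(e_4\otimes h^2)$ the first tensor factors are $S^0(e_4)$, $S^1(e_4)$, $S^2(e_4)$ in degrees $4,6,8$, and none of them can be $e_{10}$; your remark that ``a higher operation produces $e_{10}$ from $e_4$'' would require a $P^3$-type term which simply does not occur in the expansion of $S^2$. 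Finally, even for the terms that do appear, the Cartan formula yields $e_4\otimes S^2(h^2)=e_4\otimes h^6$, not the required $-e_4\otimes x_6$, and these are distinct classes in $\Ch^*(\E_8/P_8)\otimes\ff_3$. The paper sidesteps all of this by using only the ring-homomorphism property of $\rho$: it computes $\rho(y_4^2)=e_4^2\otimes 1-e_4\otimes y_4+1\otimes y_4^2$ (squaring $\rho(y_4)$ in $\ff_3$), identifies $y_4^2$ with $x_{10}+\alpha x_6h^4+\beta h^{10}$ under the isotropic decomposition, reads off \eqref{g3} modulo $(e_{10})$, and then supplies the $e_{10}\otimes 1$ term via the commuting square $(\id\otimes\pi)\rho=\Delta\circ\pi$ together with $\pi(x_{10})=e_{10}$. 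You should adopt that argument in place of the Steenrod manipulation.
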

\begin{proof}
We proceed similar as in the proof of Lemma~\ref{E7P7mod2}.
The multiplicative structure is described in \cite[Theorem~7]{Du10}.
Formula~\eqref{g1} follows from Lemma~\ref{comod-triv}.

Consider now the parabolic subgroup $Q=P_{7,8}$ corresponding to the subset $\{1,2,3,4,5,6\}$ of the Dynkin diagram of type $\E_8$. Then the commutator subgroup $C$ of the Levi part of $Q$ is of type $\E_6^{sc}$, and by \cite[Table~II]{Kac85}
$$
\Ch^*(C)=\ff_3[e_4]/(e_4^3).
$$
Let $E$ be a generic $C$-torsor over $F$. The Chow motive of ${}_E(\E_8^{sc}/P_8)$ decomposes by \cite{Br05} and \cite{CGM05} as follows:
\begin{equation}\label{decE8}
\begin{aligned}
M({}_E(\E_8/P_8))&=\ff_2\oplus \ff_2\{1\}\oplus \ff_2\{28\}\oplus \ff_2\{29\}\\
&\oplus \ff_2\{56\}\oplus \ff_2\{57\}\oplus M({}_E(\E_6^{sc}/P'_6))\{2\}\\
&\oplus M({}_E(\E_6^{sc}/P'_6))\{29\}\oplus M({}_E(\E_6^{sc}/P'_6))\{30\}\oplus M({}_E(\E_6^{sc}/P'_1))\{11\}\\
&\oplus M({}_E(\E_6^{sc}/P'_1))\{12\}\oplus M({}_E(\E_6^{sc}/P'_1))\{39\}\oplus M({}_E(\E_6^{sc}/P'_2))\{18\},
\end{aligned}
\end{equation}
where $P'_i$'s on the right-hand side denote the respective maximal parabolic subgroups for the split simply connected group of type $\E_6$ (the enumeration of simple roots follows Bourbaki).

Similar to the proof of Lemma~\ref{E7P7mod2} we use Lemma~\ref{chow-par} (applied to $G=\E_6^{sc}$, $Q=P'_6$ and $C=\D_5^{sc}$) to obtain an element $y_4\in\Ch^4(\E_6^{sc}/P'_6)$ mapping to $e_4$, so
$$
\rho(y_4)=e_4\otimes 1+1\otimes y_4.
$$
Under the decomposition above $1$ corresponds to $h^2$ and $y_4$ corresponds to either $x_6$ or to $x_6\pm h^6$. In both cases formula~\eqref{g2} holds.

Furthermore, we have
$$
\rho(y_4^2)=e_4^2\otimes 1-e_4\otimes y_4+1\otimes y_4^2.
$$
Under the decomposition above $y_4^2$ corresponds to $z=x_{10}+\alpha x_6h^4+\beta h^{10}$ for some $\alpha,\beta\in\ff_3$. Note that the coefficient of $x_{10}$ in $z$ is non-zero, since otherwise we would come to a contradiction with formula~\eqref{g2}. Namely, we would get $$\alpha(e_4\otimes h^6+1\otimes x_6h^4)+\beta\otimes h^{10}=\rho(\alpha x_6h^4+\beta h^{10})=e_4^2\otimes h^2+\ldots\mod (e_{10})\otimes\Ch^*(\E_8/P_8)$$ which is impossible.

It follows that
\begin{align}\label{qewe8}
\rho(x_{10})=e_4^2\otimes h^2-e_4\otimes x_6+1\otimes x_{10}+\gamma e_4\otimes h^6\mod (e_{10})\otimes\Ch^*(\E_8/P_8)
\end{align}
for some $\gamma\in\ff_3$, where $(e_{10})$ denotes the ideal of $\Ch^*(\E_8)$ generated by $e_{10}$.

We have a commutative diagram
\begin{equation}
\xymatrix{
\Ch^*(\E_8/P_8)\ar[d]^-{\pi}\ar[r]^-\rho&\Ch^*(\E_8)\otimes\Ch^*(\E_8/P_8)\ar[d]^-{\id\otimes\pi}\\
\Ch^*(\E_8)\ar^-{\Delta}[r]&\Ch^*(\E_8)\otimes\Ch^*(\E_8)
}
\end{equation}

The image of $x_{10}$ in $\Ch^*(\E_8)$ is $e_{10}$, so
\begin{equation}\label{qewe2}
\rho(x_{10})=e_{10}\otimes 1+1\otimes x_{10}\mod \Ch^*(\E_8)\otimes \Ker\pi.
\end{equation}

Moreover, $\Ker\pi=(h,x_6)$.
Changing $x_{10}$ to $x_{10}-\gamma x_6h^4$ (this does not affect the relations) and combining formulae~\eqref{qewe8} and \eqref{qewe2} we get formula~\eqref{g3}.

\end{proof}

\begin{prop}
Let $E$ be an $\E_8$-torsor over $\Spec F$ with $J_3(E)=(1,1)$. Then the Chow motive $M(E/P_8)$ with coefficients $\ff_3$ decomposes as $$
U(E/P_8)\oplus U(E/P_8)\{1\}\oplus\bigoplus_{i=4}^{25}R_3(E)\{i\},
$$
where the motive $U(E/P_8)$ is indecomposable and $R_3(E)$ is the upper motive of $E/B$.
\end{prop}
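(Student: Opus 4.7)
The plan is to mirror Proposition~\ref{dec-e7}, with an additional step to extract the $R_3(E)$-summands which (unlike in the $\E_7$ case) do occur here. By \cite[Theorem~3.5]{Ka13} every indecomposable summand of $M(E/P_8)$ is a Tate twist of the upper motive of some twisted flag variety; since $J_3(E) = (1,1)$ is maximal, the only candidates are $U(E/P_8)$ and $R_3(E) = U(E/B)$.

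First I would apply Theorem~\ref{RpE} with $\beta_i = h^{i-4} x_6^2 x_{10}^2$ and $\alpha_i = h^i$ for each $i \in \{4, \ldots, 25\}$. Using Lemma~\ref{E8mod3} and the multiplicativity of $\rho$, one checks that the leading term of $\rho(\beta_i)$ in the first tensor factor is $e_4^2 e_{10}^2 \otimes h^i = E_J \otimes \alpha_i$, with all other contributions having first-factor codimension strictly below $28$. Since $h^i \ne 0$ for $i \le 25$ (the relation $h^{26} = 0$ follows from $x_6^4 = -h^{24}$ together with $x_6^3 h^2 = 0$), this produces $22$ summands $R_3(E)\{i\}$, contributing total rank $22 \cdot 9 = 198$, where $9$ is the rank of $R_3(E)$ read off its Poincar\'e polynomial $(1 + t^4 + t^8)(1 + t^{10} + t^{20})$.

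Conversely, no summand $R_3(E)\{i\}$ appears for $i \notin \{4, \ldots, 25\}$: inspection of the coaction formulas shows that the coefficient of $e_4^2 e_{10}^2$ in $\rho(h^a x_6^b x_{10}^c)$ can only be nonzero when $b = c = 2$ (since $\rho(x_6)^b$ has no $e_4^2$-term for $b \ne 2$, and the $e_{10}^2$-coefficient of $\rho(x_{10})^c$ vanishes for $c \le 1$). This forces $\codim \beta \ge 32$ and hence $i \ge 4$; Poincar\'e duality of $M(E/P_8)$ (with $\dim = 57$, and $R_3(E)$ self-dual up to the shift $\{-28\}$) then excludes $i > 25$.

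Since $\rk \Ch^*(\E_8/P_8) = |W(\E_8)|/|W(\E_7)| = 240$, the complement of the $R_3(E)$-summands has rank $42$, and by Karpenko's theorem decomposes into Tate twists of $U(E/P_8)$. Finally, I would apply \cite[Theorem~4.10]{GPS16} with $b = 1$ and $\alpha = h$, exactly as in Proposition~\ref{dec-e7}, to split this remainder as $U(E/P_8) \oplus U(E/P_8)\{1\}$; indecomposability of $U(E/P_8)$ holds by definition of the upper motive. The hard part will be the first-shell computation: one must determine $J_3(E_{F(E/P_8)})$, enumerate the Tate summands over the function field via the algorithm of \cite{CGM05}, and verify using Lemma~\ref{E8mod3} that $h$ is not Poincar\'e dual to any rational cycle in the complementary codimension. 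This step is more elaborate than in the $\E_7$ case due to the higher rank of $R_3(E)$ and the larger dimension of $E/P_8$.
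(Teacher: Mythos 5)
Your proposal follows the same three-part architecture as the paper's proof: Karpenko's theorem from \cite{Ka13} to narrow the candidates to $U(E/P_8)$ and $R_3(E)$; Theorem~\ref{RpE} together with Lemma~\ref{E8mod3} to locate the $R_3(E)$-summands; and \cite[Theorem~4.10]{GPS16} with $b=1$, $\alpha=h$ for the remainder. Your middle step is spelled out in more detail than the paper's: the verification that the coefficient of $E_J = e_4^2 e_{10}^2$ forces $b=c=2$, the bound $i\ge 4$ from $\codim\beta\ge 32$, and the Poincar\'e-duality argument excluding $i>25$ are all correct and make explicit what the paper states in one line ("all possible $\beta$'s are of the form $x_6^2 x_{10}^2 h^j$, while the corresponding $\alpha$'s are $h^{j+4}$").

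The divergence is in the final step. You anticipate an elaborate first-shell computation modeled on the proof of Proposition~\ref{dec-e7} --- determining $J_3(E_{F(E/P_8)})$, enumerating Tate summands over the function field, checking Poincar\'e duals of rational cycles --- and you flag this as the hard part, but you do not actually carry it out. The paper sidesteps this entirely: it observes that the motivic decomposition of $E/P_8$ over $F(E/P_8)$ is the same as~\eqref{decE8}, hence has exactly six Tate motives (at twists $0, 1, 28, 29, 56, 57$), and that the rank of every indecomposable direct summand of $M(E/P_8)$ is divisible by $p=3$ (true for both candidates: $\rk R_3(E)=9$ and $\rk U(E/P_8)=21$). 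This divisibility constraint already pins down the first shell and lets \cite[Theorem~4.10]{GPS16} apply with no further cycle computation. Also note that your sketch of the would-be shell computation is slightly off: in the $\E_7$ proof the excluded cycle is $h^9$, not $h$; here the candidates to exclude sit in codimensions $28, 29, 56, 57$, where $h^{28}=0$ already (since $h^{26}=0$), so the generic points in question are not powers of $h$ at all, and the duality check would have a different shape. Your route might work with more care, but it is substantially longer than necessary; the rank-divisibility observation is the efficient replacement the paper actually uses.
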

\begin{proof}
We proceed similar as in the proof of Proposition~\ref{dec-e7}.

By \cite[Theorem~3.5]{Ka13} the only possible summands of $M(E/P_8)$ up to Tate twists are upper motives of the form $U(E/P)$ for parabolic subgroups $P$ such that the Tits index of $({_E\E_8})_{F(E/P)}$ contains the Tits index of $({_E\E_8})_{F(E/P_8)}$. Analyzing the Tits indices for groups of type $\E_8$ at the prime $3$ (see \cite[p.~60]{Ti66}) one can see that there are at most two candidates for possible summands of $M(E/P_8)$ up to Tate twist, namely $U(E/P_8)$ and the motive $R_3(E)$ (see Section~\ref{jinvold}).

Using Lemma~\ref{E8mod3} we see that in Theorem~\ref{RpE} all possible $\beta$'s are of the form $x_6^2x_{10}^2h^j$ for $j=0,\ldots,21$, while the corresponding $\alpha$'s are $h^{j+4}$. Note that $e_J=e_4^2e_{10}^2$ and $j$ runs till $21$, since $h^{4+21}$ is not zero, but the next power $h^{4+22}=0$ (this immediately follows from the relations of Lemma~\ref{E8mod3}). This allows to split off the summands $R_3(E)$ as in the statements of the proposition.

We are going to apply the shell technique from \cite[Section~4]{GPS16}.
Note that the motivic decomposition of $E/P_8$ over its function field is exactly the same as decomposition~\eqref{decE8}, since by \cite[Theorem~5.7]{PS10} the variety $E/P_8$ is not generically split and, therefore, by the classification of Tits indices at the prime $3$ (see \cite{Ti66}) corresponds a semisimple anisotropic kernel of type $\E^{sc}_6$.
In particular, there are six Tate motives in this decomposition exactly as in formula~\eqref{decE8}.

Moreover, the rank of every motivic direct summand of $E/P_8$ is divisible by $p=3$, since otherwise $E/P_8$ would contain a zero-cycle of degree $1$ mod $3$ over $F$, and this is impossible, since $J_3(E)=(1,1)$ by our assumption.

On the other hand, the cycle $h$, which is rational over $F$, belongs to the first shell.
It remains to apply \cite[Theorem~4.10]{GPS16} with $b=1$ and $\alpha=h$.
\end{proof}
\end{ntt}

\begin{rem}
Let $P$ be a parabolic subgroup of a split group of type $\E_8$.
It follows immediately from the classification of the Tits indices for groups of type $\E_8$
that the motives $R_3(E)$ and $U(E/P_8)$ from the proposition above are (up to Tate twists) the only possible indecomposable motivic summands of every twisted flag variety of the form $E/P$ at the prime $3$, when $J_3(E)=(1,1)$.

For the case $J_3(E)=(1,0)$ see \cite[Section~10c]{GPS16}.
\end{rem}

\begin{ntt}[Case $\E_8$, $p=2$]

\begin{lem}
We have $\CH^*(\E_8)\otimes\ff_2\simeq\ff_2[e_3,e_5,e_9,e_{15}]/(e_3^8,e_5^4,e_9^2,e_{15}^2),$
where ${\codim e_i=i}$, $e_3$, $e_5$ and $e_9$ are primitive while
\begin{equation}\label{e15}
\Delta(e_{15})=e_{15}\otimes 1+e_9\otimes e_3^2+e_5\otimes e_5^2+e_3\otimes e_3^4+1\otimes e_{15}.
\end{equation}
\end{lem}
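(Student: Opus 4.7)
The plan is to establish the ring structure from a known source and then bootstrap the coproduct formula using the machinery of Section~\ref{sec-coac}. The isomorphism of graded rings is read off directly from Kac's Table~II in \cite{Kac85}. For the coproduct, I would combine a pure degree argument for the low-codimensional generators with the Section~\ref{sec-coac} method (isotropic motivic decompositions) to nail down $\Delta(e_{15})$.

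For primitivity of $e_3$, $e_5$, $e_9$ I would argue as follows. By the Hopf algebra axioms one has $\Delta(x)=x\otimes 1+1\otimes x+\Sigma$ with $\Sigma$ a sum of pure tensors of positive degree in each factor. Since $\Ch^{>0}(\E_8)$ vanishes below codimension $3$, the forms $(i,|x|-i)$ with $i,|x|-i\geq 3$ are vacuous for $|x|\in\{3,5\}$, whence $e_3$ and $e_5$ are primitive. For $|x|=9$, the only admissible bidegrees are $(3,6)$ and $(6,3)$, and since $\Ch^6(\E_8)\otimes\ff_2$ is spanned by $e_3^2$, any correction is of the form $a\cdot e_3\otimes e_3^2+b\cdot e_3^2\otimes e_3$. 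Using $\Delta(e_3^3)=e_3^3\otimes 1+e_3^2\otimes e_3+e_3\otimes e_3^2+1\otimes e_3^3$, one can absorb the symmetric part $a=b$ into the generator. To kill the remainder I would use Lemma~\ref{chow-par} applied to $Q=P_8$: the commutator of the Levi is $\E_7^{sc}$, and the induced surjection $\pi\colon\Ch^*(\E_8)\to\Ch^*(\E_7^{sc})$ sends $e_9$ to the (already known primitive) generator of degree $9$ of $\Ch^*(\E_7^{sc})$; compatibility of $\pi$ with coproducts (and the fact that $e_3\in\Ch^*(\E_7^{sc})$ is nonzero) forces the remaining antisymmetric part to vanish.

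For the coproduct of $e_{15}$ the strategy is to apply Theorem~\ref{comod-preserving} to $X=\E_8/P_8$. Fix a lift $\widetilde{e}_{15}\in\Ch^{15}(\E_8/P_8)$ of $e_{15}$ under the canonical surjection $\pi\colon\Ch^*(\E_8/P_8)\to\Ch^*(\E_8)$; the explicit presentation of $\Ch^*(\E_8/P_8)$ (see \cite{Du10}) provides such a canonical lift. Pick a parabolic $Q$ of $\E_8$ whose Levi commutator $C$ has strictly smaller rank (e.g.\ $Q=P_1$ with $C$ of type $\D_7$, or $Q=P_8$ with $C=\E_7^{sc}$), and let $E$ be a generic $C$-torsor. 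By \cite{Br05}, \cite{CGM05} the motive of ${}_E(\E_8/P_8)$ decomposes into Tate twists of motives of projective $C$-homogeneous varieties, whose coactions are computed by Lemma~\ref{quad-eq} (for the quadric summands) together with the $\E_7^{sc}$-analogue of Lemma~\ref{E7P7mod2} (for the remaining summands). Compatibility of the coaction with the motivic decomposition (Theorem~\ref{comod-preserving}) then determines $\rho(\widetilde{e}_{15})\in\Ch^*(\E_8)\otimes\Ch^*(\E_8/P_8)$ up to classes in $\Ch^*(\E_8)\otimes\Ker\pi$. Pushing forward by $(\id\otimes\pi)$ and invoking the commutative square relating $\rho$ and $\Delta$ (as in diagram~\eqref{diagso}) yields the desired formula~\eqref{e15}. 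If a single choice of $Q$ leaves ambiguity in the decomposable terms, one repeats with a second parabolic whose kernel intersects the first trivially in codimension $15$.

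The main obstacle is the careful \emph{bookkeeping of lifts and adjustments}: the generators $e_9$ and $e_{15}$ are only defined modulo decomposables, so any spurious coefficient in $\Delta(e_{15})$ could a priori be eliminated by redefining $e_{15}\mapsto e_{15}+\gamma$ with $\gamma$ a suitable decomposable of codimension $15$. It is necessary to fix canonical lifts once and for all (matching those of \cite{Du10}) and to verify, at each stage of the inductive Section~\ref{sec-coac} argument, that the Tate summand of ${}_E(\E_8/P_8)$ contributing a given mixed term $e_i\otimes e_j^k$ genuinely corresponds to the chosen lift and not to a cosmetic modification of it. With this bookkeeping in hand, the three mixed terms $e_9\otimes e_3^2$, $e_5\otimes e_5^2$, and $e_3\otimes e_3^4$ appear from coactions on, respectively, a $\D_7$-quadric summand, an $\E_7^{sc}/P_7$-type summand, and a top power of the hyperplane class, matching the pattern from Lemma~\ref{E7P7mod2} via the exceptional embedding.
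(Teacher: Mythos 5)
Your overall strategy is essentially the paper's own alternative sketch: recover the ring structure from Kac's tables, then use a commutative square relating the coaction $\rho$ on $\Ch^*(\E_8/P_8)$ to the coproduct $\Delta$ on $\Ch^*(\E_8)$ (the analogue of diagram~\eqref{diagso}), computing $\rho$ via isotropic motivic decompositions as in Lemma~\ref{E8mod3}. The paper, however, treats this only as an alternative; its primary proof is a citation of \cite[Theorem~6.32]{MT78}, which carries all of the content. So you are being more ambitious than the paper, which is laudable, but two points deserve comment.

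First, there is a genuine gap in your primitivity argument for $e_9$. Your degree analysis correctly reduces the possible correction to $a\,e_3\otimes e_3^2 + b\,e_3^2\otimes e_3$, and absorbing the symmetric part $a=b$ by $e_9\mapsto e_9+a\,e_3^3$ (using $\Delta(e_3^3)=e_3^3\otimes1+e_3^2\otimes e_3+e_3\otimes e_3^2+1\otimes e_3^3$ in characteristic $2$) is fine. But the proposed way to kill the antisymmetric remainder fails: the quotient map $\Ch^*(\E_8)\to\Ch^*(\E_7^{sc})$ sends $e_3$ to the element $e_3$ that satisfies $e_3^2=0$ in $\Ch^*(\E_7^{sc})\otimes\ff_2\simeq\ff_2[e_3,e_5,e_9]/(e_3^2,e_5^2,e_9^2)$, so the antisymmetric term $e_3\otimes e_3^2$ (or $e_3^2\otimes e_3$) lies in the kernel of $\pi\otimes\pi$ and is invisible to this test. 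One should also bear in mind that $\Ch^*(\E_8)\otimes\ff_2$ is \emph{not} cocommutative -- formula~\eqref{e15} is visibly asymmetric -- so there is no abstract coalgebra argument that forces the antisymmetric part in $\Delta(e_9)$ to vanish; you would need either a different parabolic whose Levi retains $e_3^2$, or an explicit computation of the coaction in codimension $9$, or simply to defer to \cite{MT78} for $e_9$ as the paper does.

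Second, the $\Delta(e_{15})$ computation is outlined, not carried out, and you correctly flag that the bookkeeping of lifts (matching the generators of $\Ch^*(\E_8/P_8)$ in \cite{Du10}) is where the work lies; in that respect your writeup is at about the same level of rigor as the paper's one-sentence alternative. Given the gap on $e_9$, you should either repair that step or fall back on citing \cite{MT78} for primitivity of $e_9$ and $\Delta(e_{15})$, which is what the paper actually does.
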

\begin{proof}
See \cite[Theorem~6.32]{MT78}. 
Alternatively one can consider a commutative diagram analogous to diagram~\eqref{diagso} with the group $\E_8$ instead of $\SO_n$ and with $\E_8/P_8$ instead of $\SO_n/P_1$. Then the generator $e_{15}$ lies in the image of the respective homomorphism $\pi$, and one can recover the formulae for the coproduct $\Delta(e_{15})$ using this diagram and formulae for the coaction $\rho$, which can be obtained similar as in Lemma~\ref{E8mod3}.
\end{proof}

\begin{prop}
If $J_2(E)=(*,*,*,0)$, then $J_2(E)\le (1,1,1,0)$ or $J_2(E)\le (2,1,0,0)$ component-wisely.
\end{prop}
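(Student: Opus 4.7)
The plan is to exploit the explicit coproduct of $e_{15}$ together with the Hopf-theoretic interpretation of the $J$-invariant. The hypothesis $j_4=0$ will produce a class in codimension $15$ that vanishes in $H^*=\Ch^*(\E_8)/J$, and applying the induced coproduct $\bar\Delta$ to it yields a single relation in $H^*\otimes_{\ff_2}H^*$ which, because $\Ch^*(\E_8)$ has very few monomials in the relevant codimensions, pins down $j_1,j_2,j_3$ almost completely.

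To set up, first observe that the only codimension-$15$ monomials of $\Ch^*(\E_8)$ strictly smaller than $e_{15}$ in the order of Definition~\ref{def71} are $e_3^5$, $e_3^2e_9$ and $e_5^3$. The hypothesis $j_4=0$ therefore supplies $\alpha,\beta,\gamma\in\ff_2$ with
$$a=e_{15}+\alpha e_3^5+\beta e_3^2e_9+\gamma e_5^3\in\overline{\Ch}^*(\E_8)\cap\Ker\varepsilon,$$
so $a\in J$ and in $H^*$ one has $\bar e_{15}=\alpha\bar e_3^5+\beta\bar e_3^2\bar e_9+\gamma\bar e_5^3$. Next I would apply $\bar\Delta$ to both sides, using formula~\eqref{e15} on the left and on the right the primitivity of $\bar e_3,\bar e_5,\bar e_9$ together with the characteristic-$2$ Frobenius identity $(x\otimes 1+1\otimes x)^{2^k}=x^{2^k}\otimes 1+1\otimes x^{2^k}$. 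The terms $\bar e_{15}\otimes 1$ and $1\otimes\bar e_{15}$ on the left cancel against the pure-tensor contributions $\alpha\bar e_3^5\otimes 1+\beta\bar e_3^2\bar e_9\otimes 1+\gamma\bar e_5^3\otimes 1$ (and its opposite) on the right, and the residual equation in $H^*\otimes_{\ff_2}H^*$ reads
\begin{align*}
&(1+\beta)\,\bar e_9\otimes\bar e_3^2+\beta\,\bar e_3^2\otimes\bar e_9
 +(1+\gamma)\,\bar e_5\otimes\bar e_5^2+\gamma\,\bar e_5^2\otimes\bar e_5\\
&\qquad{}+(1+\alpha)\,\bar e_3\otimes\bar e_3^4+\alpha\,\bar e_3^4\otimes\bar e_3=0.
\end{align*}

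The six summands sit in the pairwise distinct bi-codimensions $(9,6),(6,9),(5,10),(10,5),(3,12),(12,3)$, so each vanishes separately. Since over $\ff_2$ a simple tensor $u\otimes v$ is zero iff $u=0$ or $v=0$, a one-line case analysis on $\alpha,\beta,\gamma\in\ff_2$ gives — regardless of their values — $\bar e_3^4=0$ in $H^{12}$, $\bar e_5^2=0$ in $H^{10}$, and either $\bar e_3^2=0$ in $H^6$ or $\bar e_9=0$ in $H^9$.

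The last step is to translate these vanishings into bounds on the $j_i$ via Remark~\ref{jinv-same}. In the relevant codimensions the monomials are extremely restricted (only $e_3^2$ in codim $6$; only $e_5^2$ in codim $10$; $e_3^3,e_9$ in codim $9$; $e_3^4,e_3e_9$ in codim $12$), so a direct enumeration of the homogeneous parts of $J$ as a function of the $j_i$ yields $\bar e_3^2=0\Rightarrow j_1\le 1$, $\bar e_3^4=0\Rightarrow j_1\le 2$, $\bar e_5^2=0\Rightarrow j_2\le 1$ and $\bar e_9=0\Rightarrow j_3=0$. Combined with $j_4=0$ and the automatic bound $j_3\le k_3=1$, the dichotomy gives $J_2(E)\le(1,1,1,0)$ in the case $\bar e_3^2=0$ and $J_2(E)\le(2,1,0,0)$ in the case $\bar e_9=0$. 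The main obstacle is this final translation step: while $H^*$ is abstractly of the form $\ff_2[f_i]/(f_i^{2^{l_i}})$, a generator $f_i$ may differ from the corresponding $\bar e_i$ by a decomposable of the same codimension (for instance, $\bar e_9$ could a priori equal $\bar e_3^3$ even when $\bar e_9\neq 0$), so verifying that $j_3\ge 1$ really forces $e_9\notin J$ requires a short but careful direct computation of $J^9$ from the candidate generators of $\overline{\Ch}^*(\E_8)$ prescribed by the $J$-invariant.
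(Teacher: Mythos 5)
Your argument is correct and follows the same route as the paper: apply the coproduct to the hypothesized codimension-$15$ relation coming from $j_4=0$, isolate the cross-terms by bi-codimension to force $\bar e_5^2=0$, $\bar e_3^4=0$, and ($\bar e_3^2=0$ or $\bar e_9=0$) in $H^*$, and then translate via Remark~\ref{jinv-same}. The worry you raise at the end is harmless: Remark~\ref{jinv-same} reads off $j_i$ from the induced map on indecomposables $\Ch^{>0}(G)/(\Ch^{>0}(G))^2\to H^{>0}/(H^{>0})^2$ (and here, since the degrees $3,5,9,15$ are pairwise distinct, no LEU adjustment is needed), so $\bar e_9=0$ already gives $\varphi^+(e_9)=0$ and hence $j_3=0$, with no separate computation of $J^9$ required.
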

\begin{proof}
The condition on $J_2(E)$ means that there is a rational element in $\Ch^*(\E_8/B)$ mapping to $x=e_{15}+\alpha e_5^3+\beta e_3^5+\gamma e_3^2 e_9\in\Ch^*(\E_8)$ for some $\alpha,\,\beta,\,\gamma\in\ff_2$. We have
\begin{align*}
\Delta(x)&=x\otimes 1+1\otimes e_{15}+\alpha e_5^2\otimes e_5+(1+\alpha)e_5\otimes e_5^2+\alpha \otimes e_5^3\\
&+\beta e_3^4\otimes e_3+(1+\beta)e_3\otimes e_3^4+\beta \otimes e_3^5\\
&+\gamma e_3^2\otimes e_9+(1+\gamma)e_9\otimes e_3^2+\gamma \otimes e_3^2e_9.
\end{align*}
Since $J$ is a bi-ideal by Lemma~\ref{bi-ideal}, it follows that $e_5^2$, $e_3^4$ and either $e_3^2$ or $e_9$ belong to $J$, as claimed.
\end{proof}
\end{ntt}

\section{Applications to other cohomology theories}\label{secother}

Let $M$ be a Chow motive and let $A^*$ be an oriented cohomology theory. By \cite[Section~2]{VY07} there is a unique lift of the motive $M$ to the category of $\Omega^*$-motives and,
since $\Omega^*$ is the universal oriented cohomology theory, there is a respective motive in the category of $A^*$-motives. This allows to consider every Chow motive $M$ also as an $A^*$-motive for an arbitrary oriented cohomology theory $A^*$.

In the following examples we will, in particular, compare the behaviour of $A^*$-motives with Chow motives for different oriented cohomology theories $A^*$.

In all our examples all graded modules of constant rank over $A^*(\pt)$ are free. Moreover, $H^*$ are also free over $A^*(\pt)$. Thus, by Lemma~\ref{cofree} the second assumption of Theorem~\ref{mainthm} is satisfied.

\begin{example}
Let $p$ be a prime number. By \cite[Corollary~5.11]{Me97} one has the following isomorphism of rings
$$K^0[\beta,\beta^{-1}](\PGL_p)\otimes\ff_p\simeq\ff_p[\beta,\beta^{-1}][x]/(x^p)=:H^*$$
with $\deg x=1$. The coproduct structure is given by $\Delta(x)=x\otimes 1+1\otimes x-\beta x\otimes x$.

In particular, the dual algebra $H^\vee$
is isomorphic to $\ff_p[\beta,\beta^{-1}][y]/(y^p-\beta^{p-1}y)$.
Since ${y^p-\beta^{p-1}y=\prod_{j=0}^{p-1}(y-j\beta)}$ and the polynomials $y-j\beta$ are coprime, we get that there are exactly $p$  non-isomorphic types of indecomposable direct summands of the $K^0$-motive of the respective generically split varieties.

Besides, if we denote by $M_j$ the $H^\vee$-module corresponding to the $j$-th polynomial $y-j\beta$, then we have an isomorphism of $H^\vee$-modules $M_i\otimes_{\ff_p[\beta,\beta^{-1}]}M_j\simeq M_{i+j \bmod p}$. Moreover, the module $M_0$ corresponds to the Tate motive.

This agrees with the result of Quillen with a computation of the $K$-theory of Severi--Brauer varieties (see \cite[Section~8]{Qui73}, see also \cite{Pa94}). We remark also that contrary to the $K^0$-case the Chow motive modulo $p$ of the Severi--Brauer variety $\SB(A)$, where $A$ is a central simple division algebra of degree $p$, is indecomposable.
\end{example}

\begin{example}
Let $G$ be a split semisimple simply-connected algebraic group and $p$ a prime number. Then by \cite{Me97} we have $K^0[\beta,\beta^{-1}](G)\otimes\ff_p=\ff_p[\beta,\beta^{-1}]$. In particular, for every $G$-torsor $E$ the respective bialgebra $H^*=\ff_p[\beta,\beta^{-1}]$. Therefore, the $K^0$-motive of $E/B$ is a direct sum of Tate motives. This agrees with \cite{Pa94}. 
\end{example}

\begin{example}\label{typeone}
Let $(G,p)$ be from the following list: $(\G_2,2)$, $(\F_4,2)$,
$(\E_6,2)$, $(\F_4,3)$, $(\E_6^{sc},3)$, $(\E_7,3)$, or $(\E_8,5)$,
where $sc$ stands for the split simply connected group (in the cases $(\E_6,2)$ and $(\E_7,3)$ one can choose an arbitrary split group of the respective type).
For the localized at $p$ algebraic cobordism one has by \cite[Theorem~5.1]{Ya05} the following isomorphism of rings
$$\Omega^*_{(p)}(G)\simeq \laz_{(p)}[x_{p+1}]/(px_{p+1}, v_1x_{p+1},x_{p+1}^p)$$
with $\deg x_{p+1}=p+1$.

In particular, the second Morava $K$-theory modulo $p$ equals $$K(2)^*(G)\simeq\ff_p[v_2,v_2^{-1}][x_{p+1}]/(x^p_{p+1})=:H^*.$$

In fact, by dimensional reasons (since $\deg v_2=-(p^2-1)$ and $\deg x_{p+1}=\deg \Delta(x_{p+1}) =p+1$)
and by the coassociativity of the coproduct, the coproduct structure on $H^*$ must be given by
\begin{align}\label{f1}
\Delta(x_{p+1})=x_{p+1}\otimes 1+1\otimes x_{p+1}+\alpha v_2\sum_{i=1}^{p-1}\frac 1p\binom{p}{i}x_{p+1}^i\otimes x_{p+1}^{p-i}
\end{align}
for some scalar $\alpha\in\ff_p$ (note that the coefficients $\tfrac 1p\binom{p}{i}$ are integers). Indeed, there are no higher degree terms, since $x_{p+1}^p=0$, and the coefficients of $x_{p+1}^i\otimes x_{p+1}^{p-i}$ in the sum are determined by the equation $(\id\otimes\Delta)\circ\Delta=(\Delta\otimes\id)\circ\Delta$ on the coproduct. Note that to solve this equation is a finite task, since $p=2$, $3$ or $5$. The obtained system of equations on the coefficients has a unique solution up to a scalar $\alpha$.

Note also that the right-hand side of formula~\eqref{f1} gives the first $p+1$ terms of the formal group law for the Morava $K$-theory (this is not surprising, since the coassociativity of the coproduct translates into the associativity of the formal group law). 

Note that for Chow rings modulo $p$ one has $\Ch^*(G)\simeq \ff_p[x_{p+1}]/(x_{p+1}^p)$, where the generator $x_{p+1}$ is a primitive element, and the respective indecomposable Chow motives are the generalized Rost motives (see \cite[Section~7]{PSZ08}).
Note also that by \cite[Proposition~6.2]{SeS20} these Rost motives are decomposable with respect to $K(2)^*$.

The dual algebra $H^\vee$ is isomorphic to $\ff_p[v_2,v_2^{-1}][y]/(y^p-\alpha v_2y)$.
Since it is indecomposable as an $H^\vee$-module for $\alpha=0$ (this can be seen directly or using Theorem~\ref{mainthm} and the fact that the respective Chow motive is indecomposable) and since the respective motive for $K(2)^*$ is decomposable, the scalar $\alpha$ must be non-zero.

In this case $y^p-\alpha v_2y=y(y^{p-1}-\alpha v_2)$ and the respective Rost motive for $K(2)^*$ modulo $p$ decomposes into a direct sum of two non-isomorphic indecomposable motives, one of which is the Tate motive (this motive corresponds to the $H^\vee$-module $\ff_p[v_2,v_2^{-1}][y]/(y)$). This agrees with \cite[Proposition~6.2]{SeS20}.
\end{example}

\begin{rem}
In Example~\ref{typeone} we do not use the full generality of \cite[Proposition~6.2]{SeS20}, but just the fact that the respective generalized Rost motives are decomposable with respect to the second Morava $K$-theory. But this can be seen directly. Indeed, if $X$ is a norm variety (see \cite[Definition~4.1]{S16}) of dimension $p^2-1$, then the projector $v_2^{-1}\cdot (1\times 1)\in K(2)^*(X\times X)$ defines a direct summand of the respective generalized Rost motive. This direct summand is isomorphic to the Tate motive.
\end{rem}

\begin{example}[Rost motives]
Let $p=2$ and consider the $n$-th Morava $K$-theory $K(n)$.
We consider the group $G=\SO_{2^{n+1}}$ and a $G$-torsor $E$ corresponding to an anisotropic $(n+1)$-fold Pfister form $q$.

Then the respective indecomposable Chow motive is the Rost motive associated with $q$, and as in Example~\ref{typeone} we have
$H^*\simeq\ff_2[v_n,v_n^{-1}][x]/(x^2)$ with $\deg x=2^{n}-1$. The coproduct is given by $\Delta(x)=x\otimes 1+1\otimes x+v_n x\otimes x$ and the dual algebra $$H^\vee\simeq \ff_2[v_n,v_n^{-1}][y]/(y^2+v_ny).$$ In particular, since $y^2+v_ny=y(y+v_n)$ the respective Rost motive for the $n$-th Morava $K$-theory is a direct sum of two non-isomorphic motives, one of which is the Tate motive. This agrees with \cite[Proposition~6.2]{SeS20}.
\end{example}

\begin{example}
For the localized at $p=3$ algebraic cobordism one has by \cite[Theorem~5.2]{Ya05} the following isomorphism of rings
$$\Omega^*(\E_8)_{(3)}\simeq \laz_{(3)}[x_4,x_{10}]/(3x_4, 3x_{10}, x_4^3, x_{10}^3, v_1x_4+v_2x_{10},v_1x_{10})$$
with $\deg x_i=i$. In particular, the second Morava $K$-theory of $\E_8$ modulo $p=3$ equals
$$K(2)^*(\E_8)\simeq\ff_3[v_2,v_2^{-1}][x_4]/(x_4^3)=:H^*.$$

The coproduct structure is given again by formula~\eqref{f1},
and the scalar $\alpha$ is non-zero, since it follows from \cite[Theorem~5.7]{PS10} that there exists a field extension of the base field over which our variety of Borel subgroups of type $\E_8$ decomposes into a direct sum of generalized Rost motives modulo $3$.

Note that the respective indecomposable Chow motive modulo $3$ has Poincar\'e polynomial $\dfrac{t^{12}-1}{t^4-1}\cdot \dfrac{t^{30}-1}{t^{10}-1}$ and, in particular, rank $9$. Contrary to this, the respective $K(2)^*$-motive has rank $3$.

As in Example~\ref{typeone} the dual algebra is isomorphic to $\ff_p[v_2,v_2^{-1}][y]/(y^3-\alpha v_2y)$, and the respective $K(2)^*$-motive decomposes further as a direct sum of two non-isomorphic indecomposable motives, one of which is the Tate motive.
\end{example}

\begin{rem}
Motivic decompositions which we considered in this article were usually with modulo $p$ coefficients. Nevertheless, there is a standard technique to lift motivic isomorphisms and motivic decompositions from $\ff_p$- to $\zz_{(p)}$- or $\zz_p$-coefficients (see e.g. \cite{SZ15}).
\end{rem}

\begin{rem}
Let $A^*\to B^*$ be a morphism between two oriented cohomology theories.
Vishik and Yagita provide in \cite[Section~2]{VY07}
a criterion under what conditions
there is a one-to-one correspondence between the isomorphism classes of $A^*$-motives
and the isomorphism classes of $B^*$-motives. This allows to extend our results to a bigger class of oriented cohomology theories.
\end{rem}

\begin{rem}
Let $G$ be a split semisimple algebraic group, $E$ a generic $G$-torsor and $P$ a special parabolic subgroup of $G$.
Let $A^*$ be a free oriented cohomology theory.
Due to nilpotency results \cite[Section~5]{CNZ19} (cf. \cite[Theorem~5.5]{PS17}, \cite{NPSZ18}) one can lift motivic decompositions of the $A^*$-motives of twisted flag varieties $E/P$ to a motivic decomposition of the $G$-equivariant $A^*$-motive of $G/P$.

In particular, the results of the present article provide new motivic decompositions for equivariant motives.
\end{rem}

\end{document}